\newtheorem{thm}{Theorem}[section]
\newtheorem{prop} [thm]{Proposition}
\newtheorem{cor} [thm]{Corollary}
 \newtheorem{lemma} [thm]{Lemma}
\renewcommand\leq{\leqslant} 
\renewcommand\geq{\geqslant}
\renewcommand\mod{\bmod} 
\newcommand{\wt}[1]{|#1|}
\DeclareMathOperator{\Aut}{Aut}
\DeclareMathOperator{\Sym}{Sym}
\DeclareMathOperator{\PGaL}{P\Gamma L}
\DeclareMathOperator{\PSL}{PSL}
\DeclareMathOperator{\PSigmaL}{P\Sigma L}
\DeclareMathOperator{\POmega}{P\Omega}
\DeclareMathOperator{\Sp}{Sp}
\title[Locally triangular graphs and rectagraphs with symmetry]{Locally triangular graphs\\ and rectagraphs with symmetry}
\author[J. Bamberg, A. Devillers,  J. B. Fawcett, and C. E. Praeger]{John Bamberg, Alice Devillers,  Joanna B. Fawcett, \\ and  Cheryl E. Praeger}
\address{
Centre for the Mathematics of Symmetry and Computation\\
School of Mathematics and Statistics\\
The University of Western Australia\\
35 Stirling Highway, Crawley, W.A. 6009, Australia.\newline
Email: \texttt{\{john.bamberg, alice.devillers, joanna.fawcett, cheryl.praeger$^\dag$\}@uwa.edu.au}\\
\newline $^\dag$ Also affiliated with King Abdulaziz University, Jeddah, Saudi Arabia.
}
\keywords{locally triangular graph, rectagraph, 4-homogeneous, semibiplane}
\subjclass[2010]{20B25, 05C75, 05E18, 05E20}
\begin{document}

\begin{abstract}
Locally triangular graphs are known to be halved graphs of bipartite rectagraphs, which are
connected triangle-free graphs in which every $2$-arc lies in a unique quadrangle. A graph $\Gamma$
is locally rank 3 if there exists $G\leq \Aut(\Gamma)$ such that for each vertex $u$, the
permutation group induced by the vertex stabiliser $G_u$ on the neighbourhood $\Gamma(u)$ is
transitive of rank 3. One natural place to seek locally rank 3 graphs is among the locally
triangular graphs, where every induced neighbourhood graph is isomorphic to a triangular graph
$T_n$. This is because the graph $T_n$, which has vertex set the $2$-subsets of $\{1,\ldots,n\}$ and
edge set the pairs of $2$-subsets intersecting at one point, admits a rank 3 group of automorphisms.
In this paper, we classify the locally $4$-homogeneous rectagraphs under some additional structural
assumptions. We then use this result to classify the connected locally triangular graphs that are
also locally rank 3.
\end{abstract}

\maketitle

%
%

\section{Introduction}
\label{intro}

A finite simple undirected graph $\Gamma$ is \textit{locally} $\Delta$ (or $\mathscr{C}$) for some
graph $\Delta$ (or class of graphs $\mathscr{C}$) if for every vertex $u\in V\Gamma$, the graph
induced by the neighbourhood $\Gamma(u)$ is isomorphic to $\Delta$ (or some graph in $\mathscr{C}$).
There is a well-established tradition of classifying such graphs. Extending this concept, there is a
trend of studying graphs $\Gamma$ for which some set of vertices related to $u$ is highly symmetric
for each $u\in V\Gamma$, such as locally projective graphs \cite{Iva1999}, or locally $s$-arc
transitive graphs \cite{GiuLiPra2004,Lee2009,van2011}.

 A graph $\Gamma$ is \textit{locally $2$-arc transitive} if $\Gamma$ contains a $2$-arc and there
 exists $G\leq \Aut(\Gamma)$ such that, for every $u\in V\Gamma$, the stabiliser $G_u$ acts
 transitively on the $2$-arcs starting at $u$, where a \textit{$2$-arc} is a tuple of vertices
 $(u,v,w)$ such that $u\neq w$ and $u,w\in\Gamma(v)$. A $2$-arc $(u,v,w)$ is either a
 \textit{triangle} when $u$ and $w$ are adjacent, or a \textit{$2$-geodesic} when $u$ and $w$ are at
 distance 2 apart. Connected non-complete graphs with girth 3 contain triangles and $2$-geodesics
 with the same initial vertex and are therefore never locally $2$-arc transitive, but for the most
 symmetrical of these graphs, the $2$-arcs with any given initial vertex $u$ fall into two
 $\Aut(\Gamma)_u$-orbits, namely the triangles and the $2$-geodesics. This is equivalent to the
 permutation group induced by $\Aut(\Gamma)_u$ on ${\Gamma(u)}$ being transitive of rank 3 for all
 $u\in V\Gamma$ (cf. Proposition \ref{2 orbits}).

Motivated by this observation, we introduce the following definition: a graph $\Gamma$ is
\textit{locally rank 3 with respect to $G$} if $\Gamma$ has no vertices with valency $0$ and $G\leq
\Aut(\Gamma)$ such that, for all $u\in V\Gamma$, the permutation group induced by $G_u$ on
$\Gamma(u)$ is transitive of rank 3 (cf. \S \ref{loc rank 3} for the definition of rank). We also
say that $\Gamma$ is \textit{locally rank 3} if it is locally rank 3 with respect to some $G$. Note that there exist connected graphs that are both locally rank 3 and locally
$2$-arc transitive (cf. \S \ref{loc rank 3}); necessarily,  these graphs are either complete or have girth at least 4.

In this paper, we study and classify a family of locally rank 3 graphs associated with the class of
rectagraphs (defined below). Note that locally disconnected locally rank 3 graphs were recently
analysed in a more general setting by Devillers et al. \cite{DevJinPra2013}, but the graphs we are
interested in are locally connected.

%
%

\subsection*{Locally triangular and locally rank 3 graphs} The most basic example of a rank 3
permutation group is the action of the symmetric group $S_n$ on the set $\tbinom{n}{2}$ of
$2$-subsets of $\{1,\ldots,n\}$. There are two graphs corresponding to this action. One is the
\textit{triangular graph} $T_n$, where a pair of $2$-subsets are adjacent whenever they intersect at
exactly one point, and the other is the complement $\overline{T}_n$ of $T_n$.

Remarkably, connected locally $\overline{T}_n$ graphs were completely classified by Hall and Shult
\cite{HalShu1985}. These graphs are very well behaved, for $\overline{T}_{n+2}$ is itself locally
$\overline{T}_n$, and for $n\geq 7$, it is the only such connected graph. In particular, locally
$\overline{T}_n$ graphs have bounded diameter, and we will see that they are all locally rank 3 (cf.
Corollary \ref{Hall Shult}). Note that $\overline{T}_5$ is the Petersen graph, so this
classification includes locally Petersen graphs, first classified by Hall \cite{Hall1980}.

However, the behaviour of locally $T_n$ graphs is much wilder. For example, the graph $T_n$ is not
locally $T_m$ for any $m$, nor is it ever locally rank 3. Moreover, the diameter of locally
triangular graphs is unbounded, for the halved $n$-cube is locally $T_n$ and has diameter $\lfloor
n/2\rfloor$. In fact, every connected component of the distance 2 graph of a coset graph of a linear
code over $\mathbb{F}_2^n$ with minimum distance at least seven is locally $T_n$ (cf. Lemma
\ref{code tri}), but this graph is rarely locally rank 3 (cf. Theorem \ref{rank 3} and Proposition
\ref{G}).

We say that a graph is \textit{locally triangular} if it is locally $\mathscr{C}$, where
$\mathscr{C}$ is the class of all triangular graphs. It turns out that a connected locally
triangular graph is always locally $T_n$ for some $n$ \cite[Proposition 4.3.9]{BroCohNeu1989}.
Strongly regular locally $T_n$ graphs were classified in \cite{Mak2001}, and 1-homogeneous locally
triangular graphs were classified in \cite[Theorem 4.4]{JurKoo2003}. The graphs in these
classifications all appear in our first main result Theorem \ref{main}, where we completely classify
the connected graphs that are locally triangular and locally rank 3. 

\begin{thm}
\label{main} A connected graph $\Gamma$ is locally rank $3$ and locally triangular if and only if
$\Gamma$ is the halved graph of one of the following bipartite graphs.
\begin{itemize}
\item[(i)] The  $n$-cube $Q_n$ where $n\geq 3$.
\item[(ii)] The  folded  $n$-cube $\Box_n$ where  $n$ is even and $n\geq 8$.
\item[(iii)]The bipartite  double of the  coset graph of the   binary Golay code $C_{23}$.
\item[(iv)]  The  coset graph  of the extended binary Golay code $C_{24}$.
\end{itemize}
Moreover, $\Gamma$ is locally rank $3$ with respect to $G\leq \Aut(\Gamma)$ if and only if $G$ is
listed in Table \ref{tab: group}.
\end{thm}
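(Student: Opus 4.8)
The plan is to recast the problem in terms of bipartite rectagraphs and then apply our classification of locally $4$-homogeneous rectagraphs. Suppose first that $\Gamma$ is connected and locally triangular. By \cite[Proposition~4.3.9]{BroCohNeu1989} there is a fixed integer $n\geq 3$ with $\Gamma$ locally $T_n$, and $\Gamma=\tfrac12\Sigma$ for a connected bipartite rectagraph $\Sigma$, regular of valency $n$ and uniquely determined by $\Gamma$; consequently the subgroup of $\Aut(\Sigma)$ preserving the bipartition acts faithfully and is isomorphic to $\Aut(\Gamma)$, so each $G\leq\Aut(\Gamma)$ lifts canonically to some $\widetilde G\leq\Aut(\Sigma)$. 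For a vertex $u$ of $\Gamma$ the unique-quadrangle axiom identifies $\Gamma(u)$ with $\binom{\Sigma(u)}{2}$ so that two pairs are adjacent in $\Gamma$ exactly when they meet, and this is the identification exhibiting $\Gamma(u)\cong T_n$; moreover $\widetilde G_u$ acts on the $n$ points $\Sigma(u)$ and induces on $\binom{\Sigma(u)}{2}$ the action of $G_u$ on $\Gamma(u)$.

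Next I would translate the local rank $3$ hypothesis. If a subgroup of $\Sym(m)$ has rank $3$ in its action on $\binom m2$, then its stabiliser of a pair $\{a,b\}$ is transitive on pairs disjoint from $\{a,b\}$, hence $2$-homogeneous on the remaining $m-2$ points; iterating this, together with transitivity on $\binom m2$, shows the group is transitive on $4$-subsets. Thus if $\Gamma$ is locally rank $3$ with respect to $G$ then $\widetilde G_u$ induces a $4$-homogeneous group on $\Sigma(u)$ for every $u$, so $(\Sigma,\widetilde G)$ is a locally $4$-homogeneous rectagraph; it also satisfies both the structural hypotheses of our classification and the condition that $\tfrac12\Sigma$ be locally triangular, which --- as illustrated by the ``minimum distance at least seven'' hypothesis of Lemma~\ref{code tri} --- amounts to $\Sigma$ avoiding certain short cycles. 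The converse of the displayed implication fails --- a $4$-homogeneous group on $m$ points need not be rank $3$ on $\binom m2$, since the pair-stabiliser need not be $2$-homogeneous on the complement --- so the classification of locally $4$-homogeneous rectagraphs yields only a list of candidates, to be sifted. The degenerate small-valency cases are handled by hand: for $n=3$ one has $T_3=K_3$ and $\Gamma=K_4$, where rank $3$ forces $G_u^{\Gamma(u)}$ to be cyclic of order $3$, hence $G=A_4$ and $\Gamma=\tfrac12 Q_3$; the case $n=4$, where $\Aut(T_4)$ is strictly larger than $\Sym(4)$, is treated directly.

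I would then invoke the classification of locally $4$-homogeneous rectagraphs to obtain the explicit finite list of candidate pairs $(\Sigma,\widetilde G)$, and sift it: for each candidate one checks that $\tfrac12\Sigma$ is locally triangular and then whether the $4$-homogeneous local action of $\widetilde G_u$ actually induces a rank $3$ action on $\binom{\Sigma(u)}{2}$, discarding those that fail --- in particular those with $4$-homogeneous but not $4$-transitive local action (of which, by Kantor's classification of $k$-homogeneous groups, there are very few, of degrees $9$ and $33$) and any sporadic finite rectagraphs thrown up. The surviving pairs give the halved graphs in (i)--(iv): each is the halved graph of the coset graph (or, for $C_{23}$, of the bipartite double of the coset graph) of a binary code of minimum distance at least $7$ --- namely the zero code, the repetition code, $C_{23}$, $C_{24}$ --- so is locally $T_n$ by Lemma~\ref{code tri}, and each is locally rank $3$ because the relevant symmetric or Mathieu group acts $4$-transitively on the $n$ coordinates underlying every neighbourhood and hence induces a rank $3$ action there (which one may phrase via Proposition~\ref{2 orbits} as the $2$-arcs at each vertex forming two orbits). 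This establishes the stated equivalence.

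Finally, the ``moreover'' clause requires listing all locally rank $3$ subgroups $G$ of $\Aut(\Gamma)$ for each of the four graphs. Since each such $\Gamma$ is connected and non-bipartite, a locally rank $3$ group is automatically vertex-transitive, and being locally rank $3$ means precisely that the image of $G_u$ in $\Aut(\Gamma(u))\cong\Sym(n)$ is one of the --- for these $n$, very few --- subgroups whose action on $\binom n2$ has rank $3$; intersecting this with the known structure of $\Aut(\Gamma)$ (the group $2^{n-1}{:}\Sym(n)$ or its exceptional variant for the cubes, the analogous group for the folded cubes, and the groups arising from $\Aut(C_{24})=M_{24}$ and its subgroups for the Golay cases; cf.\ Theorem~\ref{rank 3} and Proposition~\ref{G}) produces exactly Table~\ref{tab: group}. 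The main obstacle is this last sifting and bookkeeping: one must go through the rectagraph classification case by case, decide in each whether the halved graph is locally triangular and whether its $4$-homogeneous local action is strong enough to be rank $3$, compute the full automorphism group of each surviving halved graph, and determine all of its locally rank $3$ subgroups --- with the small-valency cases and the $4$-homogeneous-but-not-$4$-transitive exceptions being where the argument is most delicate.
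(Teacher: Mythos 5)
Your proposal is correct and follows essentially the same route as the paper's proof: handle $n=3,4$ by hand, pass to the unique bipartite rectagraph whose halved graph is $\Gamma$, observe that rank $3$ on $\tbinom{n}{2}$ forces $4$-homogeneity on $[n]$, apply Theorem \ref{main rect} (keeping only the bipartite graphs), and then recover the admissible groups via Theorem \ref{rank 3} and Proposition \ref{G}. One small caution: your remark about discarding all candidates whose local action is $4$-homogeneous but not $4$-transitive overshoots, since $\PGaL_2(8)$ of degree $9$ is transitive of rank $3$ on $\tbinom{9}{2}$ and yields the entry $2^8\rtimes\PGaL_2(8)$ in Table \ref{tab: group}; this does not affect the list of graphs, and the sifting you delegate to Theorem \ref{rank 3} and Proposition \ref{G} handles it correctly.
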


\begin{table}[!h]
\centering
\begin{tabular}{ l l l  }
\hline
$\Gamma$ & $n$ & $G$ \\
\hline 
$ \tfrac{1}{2}Q_n$ & $n\geq 5$ & $2^{n-1}\rtimes S_n$, $2^{n-1}\rtimes A_n$ \\
& $3$ & $A_4$ \\
& $4$ & $2^4\rtimes S_4$, $2^3\rtimes S_4$, $(2^3\rtimes A_4).2$ \\
& $9$ & $2^8\rtimes \PGaL_2(8)$ \\
& $11,12,23,24$ & $2^{n-1}\rtimes M_n$ \\ 
$\tfrac{1}{2}\Box_n$ & $n\geq 8$ even & $2^{n-2}\rtimes S_n$, $2^{n-2}\rtimes A_n$ \\
& $12,24$ & $2^{n-2}\rtimes M_n$ \\ 
$\tfrac{1}{2}\Gamma(C_{23}).2$ & 23 & $2^{11}\rtimes M_{23}$ \\
$ \tfrac{1}{2}\Gamma(C_{24})$ & 24 & $2^{11}\rtimes M_{24}$ \\
\hline
\end{tabular}
\caption{$G\leq \Aut(\Gamma)$ for which $\Gamma$ is locally rank 3}
\label{tab: group}
\end{table}

The graphs of Theorem \ref{main} are described in \S \ref{Thm 1,2} (see also \S \ref{notation}). They are all distance-transitive
graphs of valency $\tbinom{n}{2}$ where $n=23$ in (iii) and $n=24$ in (iv). As a corollary of
Theorem \ref{main} and \cite[Theorem 2]{HalShu1985}, we obtain a classification of graphs that are
locally rank 3 where the local action is that of some subgroup of $S_n$ on the set $\tbinom{n}{2}$
of $2$-subsets of $\{1,\ldots,n\}$. In particular, this classification includes the locally
$\overline{T}_n$ graphs for $n\geq 5$.

\begin{cor}
\label{Hall Shult} Let $\Gamma$ be a connected non-complete graph with girth 3. For $n\geq 5$, let
$\mathscr{C}_n$ denote the class of groups $H\leq S_n$ such that $H$ is transitive of rank 3 on
$\tbinom{n}{2}$. Then the following are equivalent.
 \begin{enumerate}
 \item[(i)] There exists $G\leq \Aut(\Gamma)$ such that, for all $u\in V\Gamma$, the  action of $G_u$ on $\Gamma(u)$ is permutation isomorphic to  the action of some $H\in \mathscr{C}_n$ on
   $\tbinom{n}{2}$.
 \item[(ii)] $\Gamma$ is  $\overline{T}_{n+2}$,  or a graph from
   Theorem \ref{main} where $n\geq 5$, or one of four known graphs.
 \end{enumerate}
\end{cor}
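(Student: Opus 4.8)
The plan is to derive Corollary~\ref{Hall Shult} by combining Theorem~\ref{main} with the Hall--Shult classification of connected locally $\overline{T}_n$ graphs \cite[Theorem~2]{HalShu1985}. The key preliminary observation is that every $H\in\mathscr{C}_n$ has exactly the same three orbitals on $\tbinom n2$ as $S_n$: being transitive of rank~$3$, $H$ has precisely three orbitals, and since these refine the three orbitals of the overgroup $S_n$, they must coincide with them. Hence, if $\Gamma$ satisfies (i), then for each $u\in V\Gamma$ the induced neighbourhood graph $\Gamma[\Gamma(u)]$ is $G_u$-invariant, and transporting through the permutation isomorphism with the action of some $H\in\mathscr{C}_n$ on $\tbinom n2$, its edge set is a union of $S_n$-orbitals. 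Therefore $\Gamma$ is regular of valency $\tbinom n2$ and $\Gamma[\Gamma(u)]$ is one of $\overline{K}_{\tbinom n2}$, $T_n$, $\overline{T}_n$, or $K_{\tbinom n2}$ for every $u$.

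Next I would eliminate the two degenerate options and show the type is constant. If some $\Gamma[\Gamma(u)]$ were complete, then $\{u\}\cup\Gamma(u)$ would be a clique on $\tbinom n2+1$ vertices, and since each of its vertices has valency $\tbinom n2$ this clique would be a connected component of $\Gamma$, forcing $\Gamma=K_{\tbinom n2+1}$ against the hypothesis that $\Gamma$ is non-complete. If some $\Gamma[\Gamma(u)]$ were edgeless, then, $\Gamma$ being connected with at least one edge (girth~$3$), we could take such a $u$ adjacent to a vertex $v$ with $\Gamma[\Gamma(v)]\cong T_n$ or $\overline{T}_n$; as neither $T_n$ nor $\overline{T}_n$ has an isolated vertex, $v$ would have a neighbour $w$ inside $\Gamma(u)$, producing a triangle on $u,v,w$ and contradicting that $\Gamma[\Gamma(u)]$ is edgeless. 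So $\Gamma[\Gamma(u)]\in\{T_n,\overline{T}_n\}$ for all $u$. For adjacent $u,v$, the integer $|\Gamma(u)\cap\Gamma(v)|$ is simultaneously the valency of $v$ in $\Gamma[\Gamma(u)]$ and of $u$ in $\Gamma[\Gamma(v)]$; since $T_n$ and $\overline{T}_n$ are regular of different degrees $2(n-2)$ and $\tbinom{n-2}2$ except when $n=7$, the type is locally constant, hence constant, for $n\neq 7$. When $n=7$ a short extra argument is needed: the subgraph of $\Gamma$ induced on $\Gamma(u)\cap\Gamma(v)$ is the local graph of $\Gamma[\Gamma(u)]$ at $v$, and the local graphs of $T_7$ and $\overline{T}_7$ are regular of degrees $5$ and $6$ respectively, so again the type cannot change along an edge. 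Thus $\Gamma$ is locally $T_n$, or locally $\overline{T}_n$, for a single $n\geq 5$.

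In the first case, since each $H\in\mathscr{C}_n$ is transitive of rank~$3$, (i) says $\Gamma$ is locally rank~$3$ as well as locally triangular, so Theorem~\ref{main} identifies $\Gamma$ as the halved graph of $Q_n$, of $\Box_n$, of the bipartite double of $\Gamma(C_{23})$, or of $\Gamma(C_{24})$, with the displayed parameter equal to our $n\geq 5$; these are exactly the graphs of (ii) coming from Theorem~\ref{main} with $n\geq 5$. In the second case, $n\geq 5$ and \cite[Theorem~2]{HalShu1985} tells us $\Gamma$ is $\overline{T}_{n+2}$ or one of the finitely many sporadic connected locally $\overline{T}_n$ graphs listed there. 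Checking the automorphism groups of these sporadic graphs against the definition of $\mathscr{C}_n$, exactly four of them admit a group $G$ inducing a local action in $\mathscr{C}_n$, and these four are the remaining graphs of (ii).

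For the converse I would verify (i) for each graph in (ii). For $\overline{T}_{n+2}$ with $n\geq 5$ we have $\Aut(\overline{T}_{n+2})=S_{n+2}$ in its action on $\tbinom{n+2}2$; the stabiliser of a vertex $\{a,b\}$ induces on the neighbourhood (the $2$-subsets disjoint from $\{a,b\}$) the full symmetric group $S_n$ on $\tbinom n2$, which is transitive of rank~$3$ and so lies in $\mathscr{C}_n$. For a graph $\Gamma$ from Theorem~\ref{main} with parameter $n\geq 5$, it is locally $T_n$ and, by that theorem, locally rank~$3$ with respect to each group $G$ of Table~\ref{tab: group}; as $\Aut(T_n)=S_n$ for $n\geq 5$, the group induced by $G_u$ on $\Gamma(u)$ is then a transitive rank-$3$ subgroup of $S_n$ on $\tbinom n2$, hence lies in $\mathscr{C}_n$. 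The four sporadic graphs satisfy (i) by the very property that singled them out. The main obstacle is the bookkeeping in the second case: pinning down precisely which of the Hall--Shult sporadic graphs carry an appropriate local action (and that there are exactly four), together with the $n=7$ refinement in the constancy-of-type argument.
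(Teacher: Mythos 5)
Your overall architecture is the same as the paper's: reduce to the statement that $\Gamma$ is locally $T_n$ or locally $\overline{T}_n$ for a single $n\geq 5$, then quote Theorem \ref{main} in the first case and \cite[Theorem 2]{HalShu1985} in the second, and verify (i) family by family for the converse. Where you differ is in how the dichotomy and its uniformity are established. The paper notes that (i) makes $\Gamma$ locally rank $3$ with respect to $G$, invokes Lemma \ref{ver tran} (girth $3$, hence non-bipartite, hence $G$-vertex-transitive) to get constancy of the local type for free, and uses Lemma \ref{orbits} to identify the $G_{u,v}$-orbits with the two $H_{\{1,2\}}$-orbits $X$ and $Y$, which simultaneously rules out the complete and edgeless local graphs. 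You instead observe that the orbitals of any $H\in\mathscr{C}_n$ coincide with those of $S_n$ on $\tbinom{n}{2}$ (correct, since they refine three orbitals by three orbitals), so each $[\Gamma(u)]$ is edgeless, $T_n$, $\overline{T}_n$ or complete, and you eliminate the degenerate cases and prove constancy of type by hand, via valencies of local graphs along an edge, with a separate argument at $n=7$. This is a legitimately more elementary route that avoids vertex-transitivity, at the cost of the extra $n=7$ case; the paper's use of Lemma \ref{ver tran} makes that case analysis unnecessary. Two small corrections to your argument: the local graph of $\overline{T}_7$ at a vertex is the Kneser graph $\overline{T}_5$, i.e.\ the Petersen graph, which is $3$-regular, not $6$-regular (your conclusion survives, since $3\neq 5$); and in the edgeless case you should say explicitly that girth $3$ forces some vertex to have local type $T_n$ or $\overline{T}_n$ (the complete type having been excluded), whence connectivity yields an edge between the two classes.

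The one substantive gap is the converse for the four exceptional graphs, which you defer as ``bookkeeping'' and which, as phrased, is circular: you describe the four graphs of (ii) as ``those sporadic Hall--Shult graphs admitting a suitable $G$'', whereas in the paper they are four specific graphs (the Conway--Smith graph, the commuting involutions graph in $\PSigmaL_2(25)$, and the complements of an elliptic quadric and of a hyperplane in $\Sp_6(2)$, cf.\ \S\ref{HS}), and one must actually verify that each satisfies (i). The paper does this by citing \cite[Theorem 13.2.3]{BroCohNeu1989} and \cite[Proposition 12.2.2]{BroCohNeu1989} for the two locally Petersen graphs and by computer computation for the two $\Sp_6(2)$ subgraphs, showing in each case that the vertex stabiliser induces $S_5$ (resp.\ $S_6$) faithfully on the neighbourhood. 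Relatedly, in your converse for $\overline{T}_{n+2}$ and for the Theorem \ref{main} graphs you pass silently from ``the induced group $G_u^{\Gamma(u)}$ is transitive of rank $3$'' to the permutation isomorphism demanded by (i); with the paper's definition this needs faithfulness of $G_u$ on $\Gamma(u)$, which Proposition \ref{faithful} supplies in the locally triangular case and which for $\overline{T}_{n+2}$ requires either interpreting (i) via the induced action (as the paper implicitly does) or choosing $G$ so that the pointwise stabiliser of the neighbourhood is trivial.
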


The groups in the class $\mathscr{C}_n$ are listed in Theorem \ref{rank 3} and are determined using the classification of the finite simple groups.
The four exceptional graphs of Corollary \ref{Hall Shult} occur when $n=5$ or $6$. The exceptional
locally $\overline{T}_5$ graphs are the Conway-Smith graph and the commuting involutions graph of
the conjugacy class of the involutory Galois field automorphism in $\PSigmaL_2(25)$. The exceptional
locally $\overline{T}_6$ graphs are the complements of an elliptic quadric or a hyperplane in the
graph whose vertices are the non-zero vectors of a 6-dimensional $\mathbb{F}_2$-vector space, with
two vectors adjacent whenever they are perpendicular with respect to a given non-degenerate
symplectic form. More details may be found in \S \ref{HS}.

%
%

\subsection*{Rectagraphs} Locally triangular graphs are closely related to \textit{rectagraphs}, which  are connected triangle-free graphs in which every $2$-arc lies in a unique quadrangle. Indeed,
if $\Pi$ is a rectagraph with $a_2(\Pi)=0$ and $c_3(\Pi)=3$, then every connected component of
the distance 2 graph of $\Pi$ is locally triangular. (The parameters $a_i$ and $c_i$ are
the same as those defined for distance-regular graphs; see \S \ref{notation}.) Conversely, by
\cite[Proposition 4.3.9]{BroCohNeu1989}, every connected locally triangular graph is a halved graph
of some bipartite rectagraph $\Pi$ with $c_3(\Pi)=3$.

Rectagraphs were first named by Neumaier \cite{Neu1982}, though they were studied before this
\cite{Cam1975}. Distance-regular rectagraphs with certain parameters have been explored by various
authors \cite{Bro1983,RifHug1990}. More generally, rectagraphs are examples of
$(0,2)$-graphs, which have been classified for small valency \cite{Bro2006,BroOst2009}. Bipartite
rectagraphs also have links to geometry, for such graphs are precisely the incidence graphs of
semibiplanes, which are connected point-block incidence structures for which any two distinct points
lie in exactly 0 or 2 blocks and any two distinct blocks intersect in exactly 0 or 2 points. These
structures were first examined in \cite{Hug1978, Wil1980} and their study is an active area of
research.

Theorem \ref{main} is largely a consequence of our second main result, which concerns rectagraphs. 
We say that a group $H$ acting on a set $\Omega$ with $|\Omega|\geq 4$ is \textit{$4$-homogeneous} if $H$ acts transitively on the set of $4$-subsets of $\Omega$.

\begin{thm}
\label{main rect} 
Let $\Pi$ be a rectagraph  with $a_2(\Pi)=0$ and $c_3(\Pi)=3$. There exists $u\in V\Pi$ such that $|\Pi(u)|\geq 4$ and $\Aut(\Pi)_u$ is $4$-homogeneous on $\Pi(u)$ if and only if $\Pi$ is one of the following.
\begin{itemize}
\item[(i)] The $n$-cube $Q_n$ where $n\geq 4$.
\item[(ii)] The  folded  $n$-cube $\Box_n$ where  $n\geq 7$.
\item[(iii)] The bipartite double of the coset graph of the   binary Golay code $C_{23}$.
\item[(iv)] The  coset graph of the   binary Golay code $C_{23}$.
\item[(v)]  The coset graph of the extended binary Golay code $C_{24}$. 
\end{itemize}
\end{thm}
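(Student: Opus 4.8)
The backward direction is routine: for each of the five graphs $\Pi$ listed, one checks that $\Pi$ is a rectagraph with $a_2(\Pi)=0$ and $c_3(\Pi)=3$ (these are standard facts about the cube, folded cube, and the Golay-code coset graphs), that $|\Pi(u)|\ge 4$, and that the vertex stabiliser $\Aut(\Pi)_u$ induces a $4$-homogeneous (indeed highly transitive, or $2$-transitive of the right kind) group on $\Pi(u)$.  The real content is the forward direction, which I would approach by a local-to-global analysis building the rectagraph outwards from a single vertex.

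First I would pin down the local structure.  Fix $u$ with $n:=|\Pi(u)|\ge 4$ and $H:=\Aut(\Pi)_u$ acting $4$-homogeneously on $\Gamma:=\Pi(u)$.  Since $\Pi$ is a rectagraph with $a_2=0$ and $c_3=3$, every $2$-arc $(u,v,w)$ lies in a unique quadrangle, so each pair $v,w\in\Gamma(u)$ determines a unique common neighbour $v*w$ of $v$ and $w$ other than $u$; moreover $c_3=3$ forces each such $v*w$ to have exactly three neighbours among the distance-$2$-from-$u$ vertices, which translates into a \emph{partial linear space} structure on $\Gamma$: the ``lines'' are the triples $\{v,w,x\}$ of neighbours of $u$ with a common neighbour at distance $2$.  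The $4$-homogeneity of $H$ on $\Gamma$ is extremely restrictive for such a geometry: either no two neighbours are collinear (the triangle-free, i.e.\ ``$Q_n$-type'' case, where the line set is empty) or the lines form a Steiner system $S(2,3,n)$ that is $H$-invariant with $H$ acting $4$-homogeneously on points.  This is where the Golay codes must enter: a $4$-homogeneous automorphism group of a Steiner triple system $S(2,3,n)$ is exceedingly rare, and I expect the possibilities to be exactly the point-residues corresponding to $n\in\{7,11,12,23,24\}$-type configurations — i.e.\ the cases giving the folded cube and the Golay-code graphs — together with the empty-line case.

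Next, having classified the local geometry at $u$, I would propagate it.  In a rectagraph with $a_2=0$ and $c_3=3$ the neighbourhood geometry at \emph{every} vertex of the halved graph is forced to be the same partial linear space (since $\Pi$ is connected and the halved graph's local graph is the corresponding $T_m$ or the appropriate Steiner-system graph — here is where I would invoke \cite[Proposition 4.3.9]{BroCohNeu1989} and the relation between $\Pi$ and its halved graph).  Then the rectagraph $\Pi$ is reconstructed from this local data: it is a bipartite graph covered by quadrangles, so it is determined by its universal cover (a tree-like object) modulo the group of ``relations'', which in the code cases is precisely the dual code.  Concretely, I would argue that $\Pi$ embeds in, or is a quotient of, the $n$-cube $Q_n$, and then that the admissible quotients — those preserving $a_2=0$, $c_3=3$, and a $4$-homogeneous vertex stabiliser — are exactly the $n$-cube itself, the folded $n$-cube, and the two Golay-code quotients (plus the extended one), matching the list.

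**Main obstacle.**  The hard part is the classification of $4$-homogeneous automorphism groups of Steiner triple systems $S(2,3,n)$ (and the ``empty'' degenerate case): one must show that $4$-homogeneity on the point set, combined with preserving a triple system, forces $n$ and the system to be one of a tiny explicit list.  This is essentially a permutation-group argument — $4$-homogeneous groups are close to $4$-transitive, so the classification of finite simple groups (via the Livingstone–Wagner theorem and the known list of multiply transitive groups) should reduce it to $S_n$, $A_n$, and Mathieu groups — but one then has to match each surviving group with a \emph{unique} invariant triple system and check it really is the residue of a Golay code (or folded-cube) configuration. I would expect the bookkeeping here — eliminating spurious group/geometry pairs and verifying the quadrangle condition $c_3=3$ survives each quotient — to be the most delicate part of the argument, and the place where the ``additional structural assumptions'' $a_2=0$, $c_3=3$ do the essential work.
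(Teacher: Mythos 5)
There is a genuine gap, and it lies at the heart of your forward direction. The ``partial linear space / Steiner triple system'' you propose to extract from $\Pi(u)$ does not exist under the stated hypotheses: since $\Pi$ is a rectagraph, $c_2(\Pi)=2$, so every vertex at distance $2$ from $u$ has \emph{exactly two} neighbours in $\Pi(u)$ (never three), and $a_2(\Pi)=0$ forbids edges from distance-$2$ vertices back into distance $2$; also $c_3=3$ only constrains distance-$3$ vertices. Consequently your proposed line set is empty in every case, and indeed all five target graphs are quotients of $Q_n$ by binary codes of minimum distance at least $7$, so they are locally indistinguishable out to distance $3$ from any vertex. No analysis of the neighbourhood geometry at a single vertex (nor its propagation to other vertices) can separate $Q_n$ from $\Box_n$ or the Golay-code coset graphs; the distinction is global. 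Your ``main obstacle'' (classifying $4$-homogeneous automorphism groups of Steiner triple systems) is therefore not the actual obstacle, and your remark that the relevant quotient is by ``the dual code'' is also off -- it is the code itself.

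The second half of your sketch -- $\Pi$ is a quotient of $Q_n$ and one must classify the admissible quotients -- is the correct skeleton and is what the paper does, but the essential mechanism is missing from your proposal. The paper constructs a covering $\pi:Q_n\to\Pi$ (using $a_2=0$, $c_3=3$), shows via Matsumoto-style arguments that $\Pi\simeq (Q_n)_{K^\pi}$ where $K^\pi=\{g\in\Aut(Q_n):g\pi=\pi\}$, and that $\Aut(\Pi)_u$ acting on $\Pi(u)$ is permutation isomorphic to $N^\pi_0\le S_n$ acting on $[n]$ (so $N^\pi_0$ is $4$-homogeneous). The key steps you would need, and do not supply, are: (a) proving $K^\pi\le\mathbb{F}_2^n$, i.e.\ that the kernel sits inside the translation subgroup (this uses $2$-transitivity of $N^\pi_0$ and the fact that the projection of $K^\pi$ to $S_n$ is a $2$-group); (b) recognising that $K^\pi$ is then an $\mathbb{F}_2N^\pi_0$-submodule of the permutation module $\mathbb{F}_2^n$ not containing $E_n$; and (c) combining the Livingstone--Wagner theorem, Kantor's list, and the CFSG list of $4$-transitive groups with Mortimer's results on the $\mathbb{F}_2$-permutation module (the heart is irreducible except for $M_{23}$, $M_{24}$), plus an explicit submodule computation in the Mathieu cases, to conclude $K^\pi\in\{0\}$, $\{0,1\}$, $C_{23}$, $C_{23}\cap E_{23}$, or $C_{24}$. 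Small valencies ($n\le 7$) are handled separately by counting $|V\Pi|$ against $2^n$. Without (a)--(c), ``classify the admissible quotients'' remains an unproved assertion rather than a proof.
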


We remark that no assumptions of vertex-transitivity or distance-regularity are made in Theorems \ref{main} or \ref{main rect}, though vertex-transitivity is a consequence of the assumptions of Theorem \ref{main} (cf. Lemma \ref{ver tran}).

The graphs appearing in Theorem \ref{main rect} are described in \S \ref{Thm 1,2}. They are all
distance-transitive graphs of valency $n$, where $n=23$ in (iii)-(iv) and $n=24$ in (v). Moreover,
all of these graphs are coset graphs of linear codes in $\mathbb{F}_2^n$. Indeed, the $n$-cube is
the coset graph of the zero code, the folded $n$-cube is the coset graph of the repetition code, and
the bipartite double of the coset graph of the binary Golay code $C_{23}$ is isomorphic to the
coset graph of the even weight vectors in $C_{23}$. The graphs of Theorem \ref{main rect} are also
examples of affine $2$-arc transitive graphs. (Note that a $2$-arc transitive graph is precisely a
vertex-transitive locally $2$-arc transitive graph.) The primitive and bi-primitive affine $2$-arc
transitive graphs were classified in \cite{IvaPra1993}. The non-bipartite graphs of Theorem
\ref{main rect} are all primitive, while the bipartite graphs of Theorem \ref{main rect} are
bi-primitive except when $\Pi$ is the $n$-cube and $n$ is even.

A result of Cameron \cite[Theorem 4.5]{Cam1975} implies that if $\Pi$ is a vertex-transitive
rectagraph with $a_2(\Pi)=0$ and $c_3(\Pi)=3$ such that the action of $\Aut(\Pi)_u$  on $\Pi(u)$ is permutation isomorphic to the natural action of $S_n$ or $A_n$  on $[n]:=\{1,\ldots,n\}$, then either $\Pi$ is the $n$-cube, or $n\geq 7$ and
$\Pi$ is the folded $n$-cube. We obtain this result as a corollary of Theorem \ref{main rect}
without the assumption of vertex-transitivity.

\begin{cor}
\label{Cameron} Let $\Pi$ be a rectagraph with $a_2(\Pi)=0$ and $c_3(\Pi)=3$. There exists $u\in V\Pi$ such that the action of $\Aut(\Pi)_u$  on $\Pi(u)$ is permutation isomorphic to the natural action of $S_n$ or $A_n$ on $[n]$ if and only if $\Pi$ is the
$n$-cube, or $n\geq 7$ and $\Pi$ is the folded $n$-cube.
\end{cor}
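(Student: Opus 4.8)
The plan is to deduce this directly from Theorem~\ref{main rect}. The essential observation is that the natural action of $S_n$ on $[n]$ is $4$-transitive, hence $4$-homogeneous, for all $n\geq 4$; similarly the natural action of $A_n$ on $[n]$ is $4$-transitive for $n\geq 6$, is trivially transitive on the unique $4$-subset of $[4]$, and is transitive on the five $4$-subsets of $[5]$ (which are the complements of the five points). Thus, for every $n\geq 4$, a group permutation isomorphic to the natural action of $S_n$ or $A_n$ on $[n]$ is $4$-homogeneous on a set of size at least $4$.

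For the backward implication, one checks that the graphs listed in the conclusion satisfy the hypothesis: the $n$-cube $Q_n$ for $n\geq 3$ and the folded $n$-cube $\Box_n$ for $n\geq 7$ are rectagraphs with $a_2=0$ and $c_3=3$, and for each of them $\Aut(\Pi)_u$ induces the natural action of $S_n$ on $\Pi(u)$, which has size $n$; both facts are recorded in \S\ref{Thm 1,2}.

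For the forward implication, let $\Pi$ be a rectagraph with $a_2(\Pi)=0$ and $c_3(\Pi)=3$, and let $u\in V\Pi$ be such that $\Aut(\Pi)_u$ acts on $\Pi(u)$ as $S_n$ or $A_n$ on $[n]$; then $|\Pi(u)|=n$, and since rectagraphs are regular, $\Pi$ has valency $n$. Because $c_3(\Pi)=3$ forces $\Pi$ to have diameter at least $3$, whereas a rectagraph of valency at most $2$ has diameter at most $2$, we have $n\geq 3$. If $n\geq 4$, then by the first paragraph $\Aut(\Pi)_u$ is $4$-homogeneous on $\Pi(u)$ with $|\Pi(u)|\geq 4$, so Theorem~\ref{main rect} shows that $\Pi$ is the $n$-cube, the folded $n$-cube with $n\geq 7$, or one of the three coset graphs of binary Golay codes appearing in Theorem~\ref{main rect}(iii)--(v). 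In the latter three cases $\Aut(\Pi)_u$ induces $M_{23}$ on $23$ points or $M_{24}$ on $24$ points (see \S\ref{Thm 1,2}); since the Mathieu groups $M_{23}$ and $M_{24}$ are sporadic simple groups, and in particular are neither alternating nor symmetric, these actions are not permutation isomorphic to the natural action of any $S_n$ or $A_n$, and so those graphs are excluded. Hence $\Pi$ is the $n$-cube or the folded $n$-cube with $n\geq 7$. If instead $n=3$, then $\Pi$ is a connected triangle-free cubic graph in which every $2$-arc lies in a unique quadrangle; a short direct argument, tracking the quadrangles forced through the three $2$-arcs at $u$ and verifying that the resulting eight-vertex configuration closes up, shows that $\Pi\cong Q_3$ (alternatively, invoke the classification of $(0,2)$-graphs of small valency \cite{Bro2006,BroOst2009}). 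Since $\Aut(Q_3)_u$ induces $S_3$ on $\Pi(u)$, this case contributes precisely the $3$-cube, completing the proof.

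This is essentially a bookkeeping consequence of Theorem~\ref{main rect}, so I do not expect a serious obstacle. The one point needing separate treatment is the case $n=3$, which lies outside the range of Theorem~\ref{main rect} (where $|\Pi(u)|\geq 4$ is assumed); the one point needing care is to identify correctly the permutation actions induced by $\Aut(\Pi)_u$ on $\Pi(u)$ for the graphs of Theorem~\ref{main rect}, in order to rule out the three Golay-code graphs.
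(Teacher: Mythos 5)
Your argument is correct, but it is not the paper's route: it is essentially the derivation the authors themselves mention in the sentence immediately preceding their proof (``Corollary \ref{Cameron} follows immediately from Theorem \ref{main rect} since the $n$-cube is the only rectagraph of valency $n$ for $n\leq 3$'') and then deliberately decline to use, because their stated goal is a proof of this one result that avoids the classification of the finite simple groups; Theorem \ref{main rect} rests on the classification of $4$-homogeneous groups, so your proof inherits the CFSG dependence. The paper instead argues directly with the covering machinery: it takes the covering $\pi:Q_n\to\Pi$ from Lemma \ref{pi}, identifies $N^\pi_0$ with $S_n$ or $A_n$ via Lemma \ref{stab}, uses Lemma \ref{rho}(i) to force $K^\pi\leq\mathbb{F}_2^n$ for $n\geq 5$, notes that the heart of $S_n$ or $A_n$ over $\mathbb{F}_2$ is irreducible, and concludes $\Pi\simeq Q_n$ or $\Box_n$ by Lemma \ref{heart irred}, settling $n\leq 4$ separately. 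Your approach buys brevity (pure bookkeeping from Theorem \ref{main rect}, with the Golay-code graphs excluded because $M_{23}$ and $M_{24}$ are neither symmetric nor alternating -- a correct and sufficient reason); the paper's approach buys a self-contained, CFSG-free argument, which is precisely why it exists as a separate proof. Two minor points on your write-up: the $n=3$ case is indeed the part lying outside Theorem \ref{main rect}, but rather than chasing quadrangles you could settle it by the counting already used in the proof of that theorem, namely $1+n+\tbinom{n}{2}+\tbinom{n}{3}\leq |V\Pi|$ together with $|V\Pi|$ dividing $2^n$, which for $n=3$ gives $|V\Pi|=2^3$ and hence $\Pi\simeq Q_3$ by \cite[Proposition 1.13.1]{BroCohNeu1989}; and your exclusion of valency $n\leq 2$ via ``$c_3(\Pi)=3$ forces diameter at least $3$'' is a defensible reading of the convention, one the paper itself glosses over (its proof simply records $\Pi\simeq Q_n$ for $n\leq 3$).
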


In addition, Brouwer proved in \cite{Bro1983} that any distance-regular bipartite graph $\Pi$ with
parameters $c_i(\Pi)=i$ for all $i$ (and some additional assumptions) is the $n$-cube, the halved
$n$-cube or the coset graph of the extended binary Golay code. Replacing the distance-regularity
condition with a local symmetry condition, we obtain a similar result.

\begin{cor}
\label{Brouwer} Let $\Pi$ be a connected bipartite graph with $c_2(\Pi)=2$ and $c_3(\Pi)=3$. There exists $u\in V\Pi$ such that   $|\Pi(u)|\geq 5$ and $\Aut(\Pi)_u$ is $5$-transitive on $\Pi(u)$   if and only if $\Pi$ is the
$n$-cube where $n\geq 5$, or the folded $n$-cube where $n\geq 8$ and $n$ is even, or the coset graph of the extended
binary Golay code.
\end{cor}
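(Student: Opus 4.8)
The plan is to obtain Corollary \ref{Brouwer} as a direct consequence of Theorem \ref{main rect}. First I would check that the hypotheses of the corollary imply those of the theorem. Since $\Pi$ is bipartite it is triangle-free, and any two vertices at distance $2$ lie in the same bipartite half, hence are non-adjacent; this forces $a_2(\Pi)=0$. The hypothesis $c_2(\Pi)=2$ says that any two vertices at distance $2$ have exactly two common neighbours, and for a connected triangle-free graph this is equivalent to saying that every $2$-arc lies in a unique quadrangle; so $\Pi$ is a bipartite rectagraph, and together with the hypothesis $c_3(\Pi)=3$ it is a rectagraph of the type considered in Theorem \ref{main rect}. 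Finally, if $u\in V\Pi$ satisfies $|\Pi(u)|\geq 5$ and $\Aut(\Pi)_u$ is $5$-transitive on $\Pi(u)$, then $\Aut(\Pi)_u$ is in particular $4$-transitive, hence $4$-homogeneous, on the set $\Pi(u)$, which has size at least $4$. So Theorem \ref{main rect} applies and $\Pi$ is one of the graphs (i)--(v) listed there.

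It then remains to determine which of (i)--(v) are bipartite and possess a vertex $u$ with $|\Pi(u)|\geq 5$ for which the full stabiliser $\Aut(\Pi)_u$ is $5$-transitive on $\Pi(u)$; for this I would invoke the descriptions of these graphs and of their automorphism groups given in \S \ref{Thm 1,2}. The coset graph of $C_{23}$ in (iv) is not bipartite, because the binary Golay code contains codewords of odd weight, so it is excluded. For the bipartite double in (iii) and the coset graphs in (iv), (v), the stabiliser $\Aut(\Pi)_u$ induces on the neighbourhood the natural action of $M_{23}$, $M_{23}$ and $M_{24}$ on $23$, $23$ and $24$ points respectively; since $M_{23}$ is $4$-transitive but not $5$-transitive of degree $23$, while $M_{24}$ is $5$-transitive of degree $24$, this excludes (iii) and (iv) and retains (v). For the $n$-cube in (i) the stabiliser induces the natural action of $S_n$ on its $n=|\Pi(u)|$ neighbours, which is $5$-transitive exactly when $n\geq 5$. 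For the folded $n$-cube in (ii), $\Box_n$ is bipartite exactly when $n$ is even, so intersecting with the constraint $n\geq 7$ coming from Theorem \ref{main rect} gives $n\geq 8$ with $n$ even, and then $S_n$ on $n\geq 8$ points is $5$-transitive. Hence the bipartite graphs from the list that satisfy the local $5$-transitivity condition are exactly $Q_n$ with $n\geq 5$, $\Box_n$ with $n\geq 8$ even, and the coset graph of $C_{24}$. Conversely, each of these is a connected bipartite graph with $c_2=2$ and $c_3=3$, being one of the rectagraphs of Theorem \ref{main rect}, and by the above it has a vertex with the required $5$-transitive local action; this establishes the equivalence.

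This argument is essentially bookkeeping once Theorem \ref{main rect} and the data of \S \ref{Thm 1,2} are in hand. The points requiring care are the standard fact that a connected bipartite graph with $c_2=2$ is precisely a bipartite rectagraph (which then automatically has $a_2=0$), the bipartiteness criteria for the folded cubes ($n$ even) and for the two Golay coset graphs (all codeword weights even), and the precise transitivity degrees of the groups arising as local actions. I do not anticipate a genuine obstacle; the most delicate point is that the local actions must be computed for the \emph{full} automorphism group stabiliser rather than for a convenient subgroup, since it is exactly this that allows the ``only if'' direction to rule out (iii) and (iv). This is where the explicit determination of $\Aut(\Pi)$ recorded in \S \ref{Thm 1,2} is used — or, alternatively, the observation that the only $5$-transitive groups of degree $23$ are $S_{23}$ and $A_{23}$, so that Corollary \ref{Cameron} (together with the fact that the graphs in (iii) and (iv) are neither cubes nor folded cubes) already forbids a $5$-transitive local action in those two cases.
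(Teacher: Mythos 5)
Your proposal is correct and follows essentially the same route as the paper: note that the hypotheses make $\Pi$ a rectagraph with $a_2=0$ and $c_3=3$, apply Theorem \ref{main rect} since $5$-transitive implies $4$-homogeneous, and then discard $\Gamma(C_{23})$ (not bipartite) and its bipartite double ($M_{23}$ not $5$-transitive on $23$ points), keeping $Q_n$ ($n\geq 5$), $\Box_n$ ($n\geq 8$ even, by the bipartiteness criterion of Lemma \ref{code}) and $\Gamma(C_{24})$, with a straightforward converse. Your write-up is just a more detailed version of the paper's argument, including the same use of the full vertex stabilisers recorded in \S\ref{Thm 1,2}.
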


The proof of Theorem \ref{main rect} proceeds as follows. If $\Pi$ is any rectagraph 
with $a_2(\Pi)=0$ and $c_3(\Pi)=3$, then for some $n$ there is a map $\pi:Q_n\to\Pi$ (called a covering) that
preserves the local structure of the $n$-cube \cite[\S 4.3B]{BroCohNeu1989}. By some unpublished
observations of Matsumoto \cite{Mat1991}, proved here in a slightly more general context,
there is a group $K^\pi$ of automorphisms of $Q_n$ associated to the covering $\pi$ that completely
determines the structure of $\Pi$ as a quotient of $Q_n$ (cf. Proposition \ref{K}). Often, this group $K^\pi$ turns out to be
a linear code in $\mathbb{F}_2^n$ as well as an $\mathbb{F}_2N_0^\pi$-module, where $N_0^\pi$ is some
subgroup of $S_n$ normalising $K^\pi$, in which case we can use coding theory and representation
theory to determine the group $K^\pi$ and therefore $\Pi$ itself.

%
%

\subsection*{Open problems} One natural extension of our work would be to generalise Theorem
\ref{main} to the $q$-analogue of the triangular graph, the so-called \textit{Grassmann graph}, whose
vertices are the $2$-subspaces of an $\mathbb{F}_q$-vector space, with two $2$-subspaces
adjacent whenever their intersection has dimension one. Examples of locally Grassmann graphs include
the graph of alternating forms over $\mathbb{F}_2$ and the graph of quadratic forms over
$\mathbb{F}_2$ \cite{MunPasSch1993}. Surprisingly, it is claimed in \cite[Theorem 3]{KabMakPad2007}
that locally Grassmann graphs only exist when $q=2$. Unfortunately, no proof or reference to one is
given.

Moreover, since all of the graphs of Theorem \ref{main}, Corollary \ref{Hall Shult} and Theorem
\ref{main rect} turn out to be distance-transitive, it would be interesting to have a direct proof
of this fact.

%
%

\subsection*{Outline} In \S \ref{prelim}, we define some notation and give some information
concerning coverings, quotient graphs and binary linear codes, including the definitions of the
graphs of Theorems \ref{main} and \ref{main rect} and Corollary \ref{Hall Shult}. In \S \ref{rect},
we explore the structure of rectagraphs covered by $n$-cubes and use this information to prove
Theorem \ref{main rect} and Corollaries \ref{Cameron} and \ref{Brouwer}. In \S \ref{loc tri}, we
give some properties of locally triangular graphs, and in \S \ref{loc rank 3}, we examine locally
rank 3 graphs and then prove Theorem \ref{main} and Corollary \ref{Hall Shult}. Note that with the
exception of Corollary \ref{Cameron}, all of the main results of this paper depend on the
classification of the finite simple groups, as their proofs use the classification of the
multiply transitive permutation groups.

%
%

\section{Preliminaries}
\label{cov quo}
\label{prelim}

Unless otherwise specified, all graphs in this paper are finite, undirected and simple (no multiple
edges or loops), all groups are finite, and all functions and actions are written on the right.
Basic graph theoretical terminology may be found in \cite{BroCohNeu1989}, and basic group
theoretical terminology may be found in \cite{Cam1999, Wil2009}. The notation used to denote the
finite simple groups is consistent with that of \cite{Wil2009}.

%
%

\subsection{Notation and basic definitions}
\label{notation}

Let $\mathbb{F}_2^n$ be the vector space of $n$-tuples over the field $\mathbb{F}_2=\{0,1\}$. The
\textit{weight} $\wt{u}$ of a vector $u\in \mathbb{F}_2^n$ is the number of non-zero coordinates in
$u$, and the \textit{Hamming distance} of $u,v\in\mathbb{F}_2^n$ is the number of coordinates at
which $u$ and $v$ differ, or equivalently, $\wt{u+v}$. For $1\leq i_1<\cdots<i_m\leq n$, let $e_{i_1,\ldots,i_m}$ denote the
vector of weight $m$ in $ \mathbb{F}_2^n$ whose $i_j$-th coordinate is 1 for $1\leq j\leq m$. Also, let $E_n$ denote the set of vectors in $\mathbb{F}_2^n$ with
even weight. Context permitting, we will write $0$ for the $n$-tuple $(0,\ldots,0)$ and $1$ for the
$n$-tuple $(1,\ldots,1)$. We write $\tbinom{n}{i}$ for the set of $i$-subsets of
$[n]:=\{1,\ldots,n\}$.

Let $G$ and $H$ be groups. Then $G\rtimes H$ denotes a semidirect product with normal subgroup $G$
and subgroup $H$, and $G.H$ denotes a group with normal subgroup $G$ and quotient $H$. Moreover,
$G\wr H$ denotes the wreath product $G^n\rtimes H$, where $H$ acts on $[n]$.  If $G$ acts on $\Omega$
and $\Delta:=\{\omega_1,\ldots,\omega_m\}\subseteq\Omega$, then we write  $G_\Delta$ for the setwise
stabiliser of $\Delta$ in $G$ and $G_{\omega_1,\ldots,\omega_m}$ for the pointwise stabiliser of
$\Delta$ in $G$. We also write $\omega_1^G$ for the orbit of $G$ containing $\omega_1$. The
\textit{induced permutation group} $G^\Omega$ of $G$ is defined to be the image of the permutation
representation $G\to \Sym(\Omega)$ and is isomorphic to $G/K$, where $K$ is the kernel of
the action of $G$ on $\Omega$. Hence $G^\Omega$ is a subgroup of $\Sym(\Omega)$. The symmetric group and alternating group on $n$ points are denoted by $S_n$ and $A_n$ respectively. For a field $F$, we denote the group algebra of $G$ over $F$ by $FG$, and if $G\leq S_n$, then the \textit{permutation module of $G$ over $F$} is the $FG$-module $F^n$ where $G$ acts by permuting coordinates.

If $G$ and $H$ are groups acting on $\Omega$ and $\Delta$ respectively, then $G$ and $H$ are
\textit{permutation isomorphic} if the action of $G$ or $H$ is faithful and there exists a group
isomorphism $\psi:G\to H$ and a bijection $\varphi:\Omega\to\Delta$ for which
$(\omega^g)\varphi=(\omega\varphi)^ {g\psi}$ for all $\omega\in\Omega$ and $g\in G$. Note that if
$G$ and $H$ are permutation isomorphic, then both groups must act faithfully. In particular, the
definition given is equivalent to the more standard definition of permutation isomorphism, which
requires that both $G$ and $H$ act faithfully.

Let $\Gamma$ be a graph. We write $V\Gamma$ for the vertex set of $\Gamma$, $E\Gamma$ for the edge
set of $\Gamma$, and $\Aut(\Gamma)$ for the automorphism group of $\Gamma$. We say that $\Gamma$ is
$G$-vertex-transitive (respectively $G$-edge-transitive) if $G\leq \Aut(\Gamma)$ and $G$ acts
transitively on $V\Gamma$ (respectively $E\Gamma$). If $X\subseteq V\Gamma$, then $[X]$ denotes the
subgraph of $\Gamma$ induced by $X$. The distance between $u,v\in V\Gamma$ is denoted by
$d_\Gamma(u,v)$, and for $u\in V\Gamma$ and any integer $i\geq 0$, we define $\Gamma_i(u):=\{v\in
V\Gamma:d_\Gamma(u,v)=i\}$. In particular, we write $\Gamma(u)$ for the neighbourhood $\Gamma_1(u)$.
For $u,v\in V\Gamma$ such that $d_\Gamma(u,v)=i$, let
\begin{align*}
c_i(u,v) & :=|\Gamma_{i-1}(u)\cap \Gamma(v)|, \\
a_i(u,v) & :=|\Gamma_{i}(u)\cap \Gamma(v)|. 
\end{align*}
We write $c_i(\Gamma)$ (respectively $a_i(\Gamma)$) whenever $c_i(u,v)$ (respectively $a_i(u,v)$)
does not depend on the choice of $u$ and $v$, and we omit the $\Gamma$ when context permits. Note that if $\Gamma$ is bipartite, then $a_i=0$ for all $i$.  We
write $\overline{\Gamma}$ for the complement of $\Gamma$. The complete graph on $n$ vertices is
denoted by $K_n$, and the complete multipartite graph with $n$ parts of size $m$ is denoted by
$K_{n[m]}$.

The \textit{distance 2 graph} $\Gamma_2$ of $\Gamma$ has vertex set $V\Gamma$, where two vertices
are adjacent whenever their distance in $\Gamma$ is 2. If $\Gamma$ is connected but not bipartite,
then $\Gamma_2$ is connected, and if $\Gamma$ is connected and bipartite, then $\Gamma_2$ has
exactly two connected components; these are called the \textit{halved graphs} of $\Gamma$. We write
$\tfrac{1}{2}\Gamma$ for a halved graph of $\Gamma$ whenever the halved graphs of $\Gamma$ are
isomorphic.

The \textit{bipartite double} $\Gamma.2$ of $\Gamma$ has vertex set $V\Gamma\times \mathbb{F}_2$,
where vertices $(u,x)$ and $(v,y)$ are adjacent whenever $u$ and $v$ are adjacent in $\Gamma$ and
$x\neq y$. The graph $\Gamma.2$ is bipartite, and it is connected if and only if $\Gamma$ is
connected but not bipartite.

%
%

\subsection{Covering maps}
\label{covering maps}

Let $\Gamma$ and $\Pi$ be graphs. A map $\pi:\Gamma\to\Pi$ is a \textit{local bijection} if $\pi$
induces a bijection from $\Gamma(x)$ onto $\Pi(x\pi)$ for all $x\in V\Gamma$. Note that a local
bijection is also called a \textit{local isomorphism}, but we prefer the former term since the
induced neighbourhood graphs need not be isomorphic in general. A surjective local bijection is a
\textit{covering}. Whenever a covering $\pi:\Gamma\to\Pi$ exists, we say that $\Pi$ is
\textit{covered} by $\Gamma$.

Here are some basic but important properties of local bijections and coverings.

\begin{lemma}
\label{onto}
Let $\Gamma$ and $\Pi$ be graphs. If $\Pi$ is connected, then any local bijection  $\pi:\Gamma\to\Pi$ is a covering.
\end{lemma}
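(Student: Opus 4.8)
The plan is to show that a local bijection $\pi:\Gamma\to\Pi$ is surjective when $\Pi$ is connected, since a covering is by definition a surjective local bijection. First I would set $\Delta := V\Gamma\,\pi \subseteq V\Pi$, the image of the vertex set, and argue that $\Delta$ is a union of connected components of $\Pi$; connectedness of $\Pi$ then forces $\Delta = V\Pi$ (note $\Gamma$ is nonempty, or else there is nothing to prove, so $\Delta \neq \emptyset$).

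To see that $\Delta$ is closed under taking neighbours in $\Pi$, I would take any vertex $y \in \Delta$, say $y = x\pi$ for some $x \in V\Gamma$, and any $z \in \Pi(y)$. Since $\pi$ restricts to a bijection $\Gamma(x) \to \Pi(x\pi) = \Pi(y)$, there exists $x' \in \Gamma(x)$ with $x'\pi = z$, whence $z \in \Delta$. Thus $\Delta$ contains the full neighbourhood in $\Pi$ of each of its vertices. Combined with $\Delta \neq \emptyset$ and $\Pi$ connected, a routine induction on distance in $\Pi$ from a fixed vertex of $\Delta$ shows $\Delta = V\Pi$, so $\pi$ is onto and hence a covering.

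There is essentially no obstacle here: the only subtlety is the degenerate case where $V\Gamma$ is empty, which should be excluded or handled trivially, and making sure the definition of ``local bijection'' being used genuinely provides surjectivity onto each $\Pi(x\pi)$ (it does, as it is a bijection $\Gamma(x) \to \Pi(x\pi)$). The argument is purely a ``connected sets are unions of components'' observation applied to the image of $\pi$.
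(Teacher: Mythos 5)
Your argument is correct and is essentially the paper's own proof: both show that the image of $\pi$ is closed under taking neighbours in $\Pi$ (since $\Pi(x\pi)=\Gamma(x)\pi$) and then invoke connectedness of $\Pi$ to conclude surjectivity. Your extra remarks about the empty-graph degenerate case and the induction on distance merely spell out what the paper leaves implicit.
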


\begin{proof}
If $u\in V\Pi$ is adjacent to $x\pi$ for some $x\in V\Gamma$, then $u\in\Pi(x\pi)=\Gamma(x)\pi$, and
so $u=y\pi$ for some $y\in\Gamma(x)$. Since $\Pi$ is connected, $\pi$ is a covering.
\end{proof}

Note that there exist local bijections that are not coverings. For example, there is a local
bijection from $K_2.2$ to itself whose image is $K_2$.

\begin{lemma}
\label{cover}
Let  $\Gamma$ and $\Pi$ be graphs, and let  $\pi:\Gamma\to\Pi$ be a covering. Then the following hold.
\begin{itemize}
\item[(i)] If $u_1,u_2\in V\Pi$ are adjacent, then for every $x_1\in u_1\pi^{-1}$, there exists a
  unique $x_2\in u_2\pi^{-1}$ such that $x_1$ and $x_2$ are adjacent.
\item[(ii)] If $\Pi$ is connected, then $|u\pi^{-1}|=|v\pi^{-1}|$ for all $u,v\in V\Pi$.
\end{itemize}
\end{lemma}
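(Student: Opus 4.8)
The plan is to deduce (i) immediately from the defining property of a covering, and then to obtain (ii) from (i) together with the connectedness of $\Pi$.

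For (i), I would fix adjacent $u_1,u_2\in V\Pi$ and a vertex $x_1\in u_1\pi^{-1}$. Since $\pi$ is a covering, it restricts to a bijection $\Gamma(x_1)\to\Pi(x_1\pi)=\Pi(u_1)$. Because $u_1$ and $u_2$ are adjacent we have $u_2\in\Pi(u_1)$, so there is exactly one $x_2\in\Gamma(x_1)$ with $x_2\pi=u_2$; this $x_2$ lies in $u_2\pi^{-1}$ and is adjacent to $x_1$, which gives existence. For uniqueness, any $x_2'\in u_2\pi^{-1}$ adjacent to $x_1$ lies in $\Gamma(x_1)$ and satisfies $x_2'\pi=u_2$, so $x_2'=x_2$ by injectivity of the restriction of $\pi$ to $\Gamma(x_1)$.

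For (ii), I would first establish that $|u_1\pi^{-1}|=|u_2\pi^{-1}|$ whenever $u_1,u_2\in V\Pi$ are adjacent (such vertices are necessarily distinct, as the graphs have no loops). By part (i), sending each $x_1\in u_1\pi^{-1}$ to the unique vertex of $u_2\pi^{-1}$ adjacent to it defines a map $\varphi\colon u_1\pi^{-1}\to u_2\pi^{-1}$, and symmetrically there is a map $\psi\colon u_2\pi^{-1}\to u_1\pi^{-1}$. These are mutually inverse: if $x_1\varphi=x_2$, then $x_2$ is adjacent to $x_1$, so by the uniqueness clause of (i) (applied to $x_2$ and the adjacent pair $u_2,u_1$) the vertex $x_1$ is the unique vertex of $u_1\pi^{-1}$ adjacent to $x_2$, whence $x_2\psi=x_1$; the same reasoning from the other side gives $x_1\psi\varphi=x_1$. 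Hence $\varphi$ is a bijection and the fibres over adjacent vertices have equal cardinality. Finally, for arbitrary $u,v\in V\Pi$, connectedness of $\Pi$ provides a walk $u=w_0,w_1,\ldots,w_k=v$, and applying the previous step to each edge $\{w_{i-1},w_i\}$ yields $|u\pi^{-1}|=|w_1\pi^{-1}|=\cdots=|v\pi^{-1}|$.

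There is no serious obstacle here; the only point requiring a little care is to read off from (i) the two one-sided uniqueness statements needed to see that $\varphi$ and $\psi$ in (ii) are genuinely inverse to one another, rather than merely both injective or both surjective. Once that is in hand, (ii) is just the standard observation that a quantity which is constant across every edge of a connected graph is globally constant. (Non-emptiness of the fibres, which follows from surjectivity of the covering $\pi$, is not actually needed for the stated equality.)
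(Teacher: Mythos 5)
Your proof is correct and follows essentially the same route as the paper: part (i) is exactly the local-bijection argument, and your edge-by-edge bijection of fibres plus connectedness is precisely the standard expansion of the paper's one-line remark that (ii) follows from (i). (Only a cosmetic slip: at the end of the mutual-inverse check you want $x_2\psi\varphi=x_2$ for $x_2\in u_2\pi^{-1}$, which indeed follows by symmetry.)
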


\begin{proof}
(i) Let $x_1\in u_1\pi^{-1}$. Then $u_2\in \Pi(x_1\pi)$. Since $\pi$ is a local bijection, there
exists a unique $x_2\in \Gamma(x_1)$ such that $u_2=x_2\pi$ (and hence $x_2\in u_2\pi^{-1}$).

(ii) Follows from (i).
\end{proof}

Let $\Gamma$ and $\Pi$ be graphs, and let $\pi:\Gamma\to\Pi$ be a covering. If $g\in \Aut(\Gamma)$,
then $g:\Gamma\to\Gamma$, so we may compose the functions $g$ and $\pi$ to obtain a new covering
$g\pi :\Gamma\to\Pi$. Since $g\pi=\pi$ if and only if $\pi=g^{-1}\pi$, it follows that
$\{g\in\Aut(\Gamma):g\pi=\pi\}$ forms a subgroup of $\Aut(\Gamma)$. We denote this subgroup by $K^\pi$.
We will see in \S \ref{rect} that $K^\pi$ is fundamental to the study of many rectagraphs. Here is
one useful property of this group.

\begin{lemma}
\label{ker} Let $\Gamma$ and $\Pi$ be graphs where $\Gamma$ is connected, and let $\pi:\Gamma\to\Pi$
be a covering. Then $K^\pi_x=1$ for all $x\in V\Gamma$.
\end{lemma}

\begin{proof}
 Let $x\in V\Gamma$ and $y\in \Gamma(x)$. Set $u:=x\pi$ and $v:=y\pi$. Then $v\in\Pi(u)$. Let $g\in
 K^\pi_x$. Then $y^g\pi=y(g\pi)=y\pi$, so $y^g\in v\pi^{-1}$, but $x^g=x$, so $y$ and $y^g$ are both
 neighbours of $x$ in $v\pi^{-1}$. Hence $y=y^g$ by Lemma \ref{cover} (i). Since $\Gamma$ is
 connected, it follows that $g$ fixes $V\Gamma$ pointwise, and so $g=1$.
\end{proof}

%
%

\subsection{Quotient graphs}
\label{Q graphs}

For a graph $\Gamma$ and a partition $\mathcal{B}$ of $V\Gamma$, the \textit{quotient graph}
$\Gamma_\mathcal{B}$ is the graph with vertex set $\mathcal{B}$, where  $B_1,B_2\in\mathcal{B}$ are adjacent whenever there exists $x_1\in B_1$ and $x_2\in B_2$ such that $x_1$ and
$x_2$ are adjacent in $\Gamma$. Note that $\Gamma_\mathcal{B}$ can contain loops. 

Any graph  covered by $\Gamma$ is naturally isomorphic to a quotient graph of $\Gamma$, for if $\Pi$ is a graph and $\pi:\Gamma\to\Pi$ is a covering, then $\Pi\simeq \Gamma_\mathcal{B}$ where  $\mathcal{B}$ is the set of preimages of $\pi$ in $V\Gamma$. On the other hand, given a graph $\Gamma$ and a partition $\mathcal{B}$ of $V\Gamma$, there is a natural surjective map $\pi:\Gamma\to\Gamma_{\mathcal{B}}$ sending a vertex in $\Gamma$ to the part in $\mathcal{B}$ containing it, and this map  is a covering precisely when the  following two conditions hold: (i)  no $B\in\mathcal{B}$ contains an edge, and (ii) if  $B_1,B_2\in V\Gamma_\mathcal{B}$ are adjacent, then for every $x_1\in B_1$, there   exists a unique $x_2\in B_2$ such that $x_1$ and $x_2$ are adjacent (cf. Lemma \ref{cover}).

One useful way of defining partitions for graph quotients is to use the orbits of a normal subgroup
of a group of automorphisms, for we retain some control of the automorphism group, the valency
and the local action of the quotient.

Let $M\leq \Aut(\Gamma)$ (where $M$ is not necessarily normal in $\Aut(\Gamma)$), and let
$\mathcal{B}$ be the set of orbits of $M$ on $V\Gamma$. We say that $\Gamma_{\mathcal{B}}$ is a
\textit{normal quotient} of $\Gamma$ and write $\Gamma_M$ for $\Gamma_{\mathcal{B}}$. If $M\unlhd
G\leq \Aut(\Gamma)$, then $G$ acts naturally on $\mathcal{B}$ by $(x^M)^g:=(x^g)^M$ for all $x\in
V\Gamma$ and $g\in G$. Moreover, this action preserves adjacency, so $G/K\leq \Aut(\Gamma_M)$, where
$K$ is the kernel of the action of $G$ on $\mathcal{B}$. Note that $M\leq K$, but $M\neq K$ in
general.

Recall from \S \ref{covering maps} that if $\pi:\Gamma\to\Pi$ is a covering, then $K^\pi=\{g\in \Aut(\Gamma):g\pi=\pi\}$, and
let $N^\pi$ denote the normaliser of $K^\pi$ in $\Aut(\Gamma)$. In the following, we begin to see
the importance of the group $K^\pi$.

\begin{lemma}
\label{K cover} Let $\Gamma$ be a connected graph, and let $M\leq \Aut(\Gamma)$. If the natural map
$\pi:\Gamma \to \Gamma_M$ is a covering, then the following hold.
\begin{itemize}
 \item[(i)] $K^\pi=M$, and $K^\pi$ is the kernel of the action of $N^\pi$ on $V\Gamma_M$.
\item[(ii)] $N^\pi/K^\pi\leq \Aut(\Gamma_M)$.
\item[(iii)] $N^\pi_x\simeq (N^\pi/K^\pi)_{x\pi}$ for all $x\in V\Gamma$.
\end{itemize}
\end{lemma}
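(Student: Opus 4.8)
The plan is to unwind the definitions carefully, using the results already established about coverings, together with Lemma \ref{ker}.

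First I would prove part (i). Since $\mathcal{B}$ is by definition the set of $M$-orbits on $V\Gamma$, the preimages of the natural map $\pi:\Gamma\to\Gamma_M$ are exactly the $M$-orbits, so every $g\in M$ satisfies $x^g M = xM$ for all $x$, i.e. $g\pi=\pi$; hence $M\leq K^\pi$. For the reverse inclusion, let $g\in K^\pi$ and pick any $x\in V\Gamma$. Then $x^g\pi = x(g\pi)=x\pi$, so $x^g$ lies in the same $M$-orbit as $x$, i.e. $x^g = x^{m}$ for some $m\in M$; thus $gm^{-1}\in\Aut(\Gamma)_x$. Now $gm^{-1}\in K^\pi$ because $M\leq K^\pi$, so $gm^{-1}\in K^\pi_x$, which is trivial by Lemma \ref{ker} (using that $\Gamma$ is connected). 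Hence $g=m\in M$, giving $K^\pi=M$. For the second assertion of (i): $N^\pi$ normalises $K^\pi=M$, so by the remarks in \S\ref{Q graphs} it acts on $\mathcal{B}=V\Gamma_M$ preserving adjacency; let $K$ be the kernel of this action. Certainly $M=K^\pi\leq K$. Conversely, if $g\in K$ then $g$ fixes every $M$-orbit setwise, so $x^g M = xM$ for all $x$, i.e. $x^g\pi=x\pi$, i.e. $g\pi=\pi$, so $g\in K^\pi$. Thus $K=K^\pi$.

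Part (ii) is then immediate: the action of $N^\pi$ on $V\Gamma_M$ has image $N^\pi/K$ inside $\Aut(\Gamma_M)$ by the adjacency-preserving property noted above, and $K=K^\pi$ by part (i), so $N^\pi/K^\pi\leq\Aut(\Gamma_M)$.

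For part (iii), fix $x\in V\Gamma$ and consider the restriction to $N^\pi_x$ of the quotient map $N^\pi\to N^\pi/K^\pi$. If $g\in N^\pi_x$ then $(xM)^{gK^\pi} = (x^g)M = xM$, so the image of $g$ lies in $(N^\pi/K^\pi)_{x\pi}$; this gives a homomorphism $N^\pi_x\to (N^\pi/K^\pi)_{x\pi}$. Its kernel is $N^\pi_x\cap K^\pi = K^\pi_x = 1$ by Lemma \ref{ker}, so the map is injective. For surjectivity, take $\bar g\in (N^\pi/K^\pi)_{x\pi}$ and lift it to some $g\in N^\pi$; then $x^g M = xM$, so $x^g = x^m$ for some $m\in M=K^\pi$, and $gm^{-1}\in N^\pi_x$ maps to $\bar g$ (since $m\in K^\pi$). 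Hence $N^\pi_x\simeq (N^\pi/K^\pi)_{x\pi}$. I do not anticipate a serious obstacle here; the only point requiring care is the repeated use of Lemma \ref{ker}, which is exactly what forces the various kernels to be trivial and makes the quotient map behave like an isomorphism on vertex stabilisers, so one must check in each place that the relevant element genuinely lies in some $K^\pi_x$.
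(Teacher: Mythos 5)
Your proposal is correct and follows essentially the same route as the paper: establish $M\leq K^\pi$ from the definition of $\Gamma_M$, use Lemma \ref{ker} to get the reverse inclusion and to identify the kernel of the $N^\pi$-action, and then show the quotient map restricted to $N^\pi_x$ is an isomorphism onto $(N^\pi/K^\pi)_{x\pi}$, with injectivity from Lemma \ref{ker} and surjectivity by adjusting a lift by an element of $K^\pi=M$. No gaps; this matches the paper's argument in all essentials.
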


\begin{proof}
 Clearly $M\leq K^\pi$. If $g\in K^\pi$ and $x\in V\Gamma$, then $(x^g)^M=x(g\pi)=x\pi=x^M$, so $x^g=x^m$ for
 some $m\in M$. Then $gm^{-1}\in K^\pi_x$, but $K^\pi_x=1$ by Lemma \ref{ker}, so $g\in M$. Thus
 $M=K^\pi$. In particular, $N^\pi$ acts on $V\Gamma_M$, and $K^\pi$ is contained in the kernel of
 this action. If $g\in N^\pi$ and $(x^M)^g=x^M$ for all $x\in V\Gamma$, then $x(g\pi)=x^g\pi=x\pi$ for all
 $x\in V\Gamma$, so $g\in K^\pi$. Thus $K^\pi$ is the kernel of the action of $N^\pi$ on
 $V\Gamma_M$, proving (i) and (ii).

Let $x\in V\Gamma$. There is a group homomorphism $\psi: N^\pi_x\to (N^\pi/K^\pi)_{x\pi}$ defined by
$g\mapsto K^\pi g$ for all $g\in N^\pi_x$, and it is injective since $N^\pi_x\cap K^\pi=1$ by Lemma
\ref{ker}. To see that $\psi$ is surjective, let $K^\pi g\in (N^\pi/K^\pi)_{x\pi}$. Then $x^g$ and
$x$ are in the same $K^\pi$-orbit, so there exists $k\in K^\pi$ such that $x^{gk}=x$. Thus $gk\in
N^\pi_x$ and $(gk)\psi=K^\pi gk = K^\pi(gkg^{-1})g= K^\pi g$, as desired.
\end{proof}

%
%

\subsection{Coset graphs of binary linear codes}
\label{coset graphs}

The \textit{$n$-cube} $Q_n$ is defined to be the graph with vertex set $\mathbb{F}_2^n$, where two
$n$-tuples are adjacent whenever their Hamming distance is 1. The $n$-cube is a connected regular bipartite graph of valency $n$
with parts the sets of vectors of even and odd weight. Its automorphism group is $\mathbb{F}_2\wr
S_n=\mathbb{F}_2^n\rtimes S_n$, where $\mathbb{F}_2^n$ acts on $VQ_n=\mathbb{F}_2^n$ by translation,
and $S_n$ acts by permuting coordinates.

In this paper, we will be interested in normal quotients of $Q_n$, particularly those that are
formed using additive subgroups of $\mathbb{F}_2^n$. If $C$ is such a group, we define the
\textit{coset graph} of $C$, denoted by $\Gamma(C)$, to be the normal quotient $(Q_n)_C$. This graph
is so named because the orbits of $C$ on $\mathbb{F}_2^n$ are the cosets of $C$ in $\mathbb{F}_2^n$.
Note that $\Gamma(C)$ is loop-free if and only if $e_i\notin C$ for all $i\leq n$.

The additive subgroups of $\mathbb{F}_2^n$ are precisely the binary linear codes, for which we now
give some basic terminology. A \textit{binary linear code} $C$ is defined to be a subspace of
$\mathbb{F}_2^n$. The vectors in $C$ are called \textit{codewords}. The \textit{minimum distance} of
$C$ is defined to be $\infty$ when $C=\{0\}$, and the minimum Hamming distance between distinct
codewords in $C$ otherwise. Note that the minimum distance of a non-zero code is precisely the
minimum weight of the non-zero codewords. We say that $C$ is an \textit{$[n,r,d]$-code}, where $r$
is the dimension of $C$ and $d$ is the minimum distance of $C$. If every codeword in $C$ has even
weight, then $C$ is an \textit{even} code. The \textit{automorphism group} of $C$, denoted by $
\Aut(C)$, is defined to be the stabiliser of $C$ in $S_n$, where $S_n$ acts on $\mathbb{F}_2^n$ by
permuting coordinates.

The following describes some elementary but important properties of binary linear codes and the coset graphs formed from them.

\begin{lemma}
\label{code}
Let $C$ be a binary linear $[n,r,d]$-code where  $d\geq 2$. Let $\Pi:=\Gamma(C)$. Then the following hold.
\begin{itemize}
 \item[(i)] The graph $\Pi$ is bipartite if and only if $C$ is an even code.
\item[(ii)] If $\Pi$ is bipartite, then the  halved graphs of  $\Pi$ are isomorphic. 
\item[(iii)] If $C$ is not an even code, then $ \Pi.2\simeq \Gamma(C\cap E_n)$.
\item[(iv)] If $d\geq 3$, then the natural map $\pi: Q_n \to \Pi$ is a covering. 
\item[(v)] If $d\geq 5$, then $a_1(\Pi)=0$ and $c_2(\Pi)=2$. 
\item[(vi)] If $d\geq 7$, then $a_2(\Pi)=0$ and $c_3(\Pi)=3$.
\end{itemize}
 \end{lemma}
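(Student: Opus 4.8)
The plan is to derive all six parts from the observation that $\Pi=\Gamma(C)=(Q_n)_C$ is vertex-transitive --- the translations $v+C\mapsto v+t+C$ for $t+C\in\mathbb{F}_2^n/C$ are automorphisms acting transitively on $V\Pi$ --- so in each computation I may take the base vertex to be $\bar{0}:=0+C$, whose neighbours in $\Pi$ are exactly the cosets $e_1+C,\dots,e_n+C$. The engine is the distance formula
\[
d_\Pi(v+C,\,w+C)=\min\{\wt{x}:x\in v+w+C\},
\]
which I would establish by projecting and lifting: projecting a shortest $v$-to-$(w+c)$ path in $Q_n$, where $c\in C$ minimises $\wt{v+w+c}$, gives a walk of that length in $\Pi$, so $d_\Pi\leq\min$; conversely a walk $B_0\sim\cdots\sim B_\ell$ in $\Pi$ lifts stepwise to $v=y_0,y_1,\dots,y_\ell$ in $Q_n$ with $y_i\in B_i$ and $\wt{y_{i-1}+y_i}=1$, so $\wt{v+y_\ell}\leq\ell$ with $y_\ell\in w+C$, giving $\min\leq d_\Pi$. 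In particular, for $d$ large $\bar{0}$ does have vertices at distances $2$ and $3$, so the parameters below are not vacuous.

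Parts (iv), (i), (ii) are then short. For (iv): $\pi:Q_n\to\Pi$ restricted to $\Gamma(v)$ is a bijection onto $\Pi(v+C)$ iff the cosets $v+e_i+C$ are pairwise distinct, i.e. $C$ has no weight-$2$ codeword; together with $d\geq 2$ (needed for $\Pi$ to be simple) this is exactly $d\geq 3$, and then $\pi$ is a covering by Lemma~\ref{onto} since $\Pi$ is connected. For (i): if $C$ is even, weight parity is constant on each coset and $2$-colours $\Pi$; if not, an odd-weight codeword $c$ gives $\bar{0}=c+C$ and a closed walk of length $\wt{c}$ (odd) through $\bar{0}$, hence an odd cycle. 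For (ii): $v+C\mapsto v+e_1+C$ is a well-defined automorphism of $\Pi$ (translation by $e_1$ normalises the translation subgroup $C$ of $\Aut(Q_n)$), and when $C$ is even it reverses weight parity, hence swaps the two parts of the bipartition of the connected bipartite graph $\Pi$; those two parts are the components of $\Pi_2$, so the halved graphs are interchanged, hence isomorphic.

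For (iii), with $C$ not even the even subcode $D:=C\cap E_n$ has index $2$ in $C$, so $|\mathbb{F}_2^n/D|=2\,|V\Pi|$. I would check that $\Phi:\Gamma(D)\to\Pi.2$, $v+D\mapsto(v+C,\,\wt{v}\bmod 2)$, is a graph isomorphism: it is well defined and injective because $v+D=v'+D$ iff $v+v'\in C$ and $\wt{v+v'}$ is even; it is a bijection by the vertex count; and it preserves adjacency because $v+D\sim w+D$ iff $v+w+e_i\in C$ for some $i$ and $\wt{v+w}$ is odd (every $e_i$ having odd weight), which is precisely adjacency of $(v+C,\wt{v}\bmod 2)$ and $(w+C,\wt{w}\bmod 2)$ in $\Pi.2$.

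Finally (v) and (vi) are casework off the distance formula. A vertex at distance $2$ from $\bar{0}$ is a coset of minimum weight $2$, hence $e_p+e_q+C$ with $p\neq q$; a common neighbour with $\bar{0}$ is a coset $e_i+C$ of minimum weight $1$, and $d\geq 5$ forces $i\in\{p,q\}$, after which $e_p+C$ and $e_q+C$ are seen to be distinct and to work, giving $c_2=2$; the same bound forbids a triangle on any edge, so $a_1=0$. For (vi): the neighbours $e_p+e_q+e_i+C$ of a distance-$2$ vertex $e_p+e_q+C$ lie at distance $1$ from $\bar{0}$ if $i\in\{p,q\}$ and distance $3$ otherwise (using $d\geq 7$ to see that last coset has minimum weight exactly $3$), never distance $2$, so $a_2=0$; and a vertex at distance $3$ is $e_a+e_b+e_c+C$ with $a,b,c$ distinct, whose distance-$2$ neighbours of $\bar{0}$ are --- by a parity count in which $d\geq 7$ forces a relevant codeword of weight $\leq 6$ to vanish --- exactly $e_a+e_b+C$, $e_a+e_c+C$, $e_b+e_c+C$, pairwise distinct, so $c_3=3$. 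I expect the only genuinely delicate step to be this last one: one must verify that these three cosets really are distance-$2$ neighbours of both vertices and that no others arise (i.e. that $\wt{e_p+e_q+e_a+e_b+e_c}=1$ forces $\{p,q\}\subseteq\{a,b,c\}$), while keeping track of exactly which weight bound is invoked where.
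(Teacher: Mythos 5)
Your proposal is correct and follows essentially the same route as the paper: you work with the coset description of $\Pi_i(x+C)$ (packaged as a distance formula), use translation by $e_1$ for (ii) and the map $v+D\mapsto(v+C,\wt{v}\bmod 2)$ for (iii), and derive (iv)--(vi) by the same weight-bound casework the paper leaves as routine. The only cosmetic difference is in (i), where you close with an odd closed walk rather than the paper's minimal odd-weight codeword giving an odd cycle.
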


\begin{proof}
(i) If $C$ is even, then the weights of vectors in a coset of $C$ have the same parity. Edges in
$Q_n$ only occur between vectors whose weights have different parity, so $\Pi$ is bipartite.

If $C$ is not even, then we may choose $c\in C$ of minimal odd weight. There exists a path
$(x_0,\ldots,x_m)$ in $Q_n$ where $x_0=0$, $x_m=c$, and $m=\wt{c}$. The minimality of $c$ and $m\geq
d\geq 2$ then imply that $(x_0+C,\ldots,x_{m}+C)$ is an odd cycle, so $\Pi$ is not bipartite.

(ii) Let $\Gamma$ and $\Sigma$ be the halved graphs of $\Pi$ (if they exist). By the proof of (i),
we may define a map $\varphi:\Gamma\to \Sigma$ by $x+C\mapsto x+e_1+C$ for all $x+C\in V\Gamma$.
It is routine to verify that $\varphi$ is a graph isomorphism.

(iii) Define a map $\varphi:\Gamma(C\cap E_n)\to \Pi.2$ by $x+C\cap E_n\mapsto (x+C,\wt{x}\mod 2)$ for all $x\in VQ_n$. It is routine to verify that $\varphi$ is a well-defined graph isomorphism.

(iv) This follows from the fact that $\Pi(x+C)=\{x+e_i+C:i\in [n]\}$ for all $x\in VQ_n$.

(v) This follows from the structure of $\Pi(x+C)$ observed in (iv) together with
$\Pi_2(x+C)=\{x+e_{i,j}+C:\{i,j\}\in \tbinom{n}{2}\}$ for all $x\in VQ_n$.

(vi) This follows from the observations in (iv) and (v) together with
$\Pi_3(x+C)=\{x+e_{i,j,k}+C:\{i,j,k\}\in \tbinom{n}{3}\}$ for all $x\in VQ_n$.
\end{proof}

Thus whenever $\Gamma(C)$ is bipartite, its halved graphs are isomorphic, and so we may refer to
\textit{the} halved graph of $\Gamma(C)$. In particular, $C\leq E_n$, and we may
assume that the halved graph of $\Gamma(C)$ has vertex set $E_n/C$. Note that if $C$ is not an even code, then there is a natural graph isomorphism between $\tfrac{1}{2}\Gamma(C\cap E_n)$ and the distance 2 graph $\Gamma(C)_2$.

%
%

\subsection{Graphs of Theorems \ref{main} and  \ref{main rect}}
\label{Thm 1,2}

The graphs of Theorem \ref{main rect} are all coset graphs of binary linear codes, and the graphs of
Theorem \ref{main} are all halved graphs of coset graphs of even binary linear codes. There are five
relevant binary linear codes for these theorems, two of which are elementary.

The most basic code is the \textit{zero code} $\{0\}$, whose coset graph is simply $Q_n$. It is an
even binary linear $[n,0,\infty]$-code with automorphism group $S_n$. The $n$-cube is
distance-transitive with automorphism group $2^n\rtimes S_n$ and vertex stabiliser $S_n$. The
automorphism group of the halved $n$-cube is $2^{n-1}\rtimes S_n$ for $n\geq 5$, $2^4\rtimes S_4$
for $n=4$, and $S_{4}$ for $n=3$ \cite[p. 265]{BroCohNeu1989}. It has vertex stabiliser $S_n$ when
$n\neq 4$, and vertex stabiliser $S_4\times C_2$ when $n=4$. The halved $n$-cube is also
distance-transitive, as is any halved graph of a bipartite distance-transitive graph \cite[Theorem
4.1.10]{BroCohNeu1989}.

The other basic code is the \textit{repetition code} $\{0,1\}$, whose coset graph $\Box_n$ is called
the \textit{folded $n$-cube}. It is a binary linear $[n,1,n]$-code with automorphism group $S_n$,
and it is an even code if and only if $n$ is even. The folded $n$-cube is distance-transitive with
automorphism group $2^{n-1}\rtimes S_n$ and vertex stabiliser $S_n$ for $n\geq 5$. When $n$ is even
and $n\geq 8$, the halved folded $n$-cube is distance-transitive with automorphism group
$2^{n-2}\rtimes S_n$ and vertex stabiliser $S_n$ \cite[p. 265]{BroCohNeu1989}.

The remaining codes are Golay codes. The \textit{extended binary Golay code} $C_{24}$ is a binary
linear $[24,12,8]$-code. There are various methods for constructing this code; see \cite{Wil2009},
for example. Its automorphism group is the Mathieu group $M_{24}$, and it is an even code. The graph
$\Gamma(C_{24})$ is distance-transitive with automorphism group $2^{12}\rtimes M_{24}$ and vertex
stabiliser $M_{24}$ \cite[Theorem 11.3.2]{BroCohNeu1989}. The halved graph of $\Gamma(C_{24})$ is
distance-transitive with automorphism group $2^{11}\rtimes M_{24}$ and vertex stabiliser $M_{24}$
(cf. Lemma \ref{rank 3 code}).

 If we remove one fixed coordinate from every codeword in the extended binary Golay code, then we
 obtain the \textit{binary Golay code} $C_{23}$, which is a binary linear $[23,12,7]$-code. Its
 automorphism group is the Mathieu group $M_{23}$, the stabiliser in $M_{24}$ of a point, and it is
 not an even code. The graph $\Gamma(C_{23})$ is distance-transitive with automorphism group
 $2^{11}\rtimes M_{23}$ and vertex stabiliser $M_{23}$ \cite[Theorem 11.3.4]{BroCohNeu1989}.

Lastly, the set of vectors in $C_{23}$ with even weight forms a binary linear $[23,11,8]$-code. Its
automorphism group is $M_{23}$ since automorphisms of codes preserve weight, and its coset graph is
isomorphic to the bipartite double $\Gamma(C_{23}).2$ by Lemma \ref{code} (iii). The graph $\Gamma(C_{23}).2$ is
distance-transitive with automorphism group $2^{12}\rtimes M_{23}$ and vertex stabiliser $M_{23}$ \cite[p. 362]{BroCohNeu1989}. 
The halved graph of $\Gamma(C_{23}).2$ is distance-transitive with automorphism group $2^{11}\rtimes
M_{23}$ and vertex stabiliser $M_{23}$ (cf. Lemma \ref{rank 3 code}).

 Note that if $C$ is one of
the linear codes in $\mathbb{F}_2^n$ defined above, or any binary linear code with $d\geq 5$, then
$\Aut(\Gamma(C))$ can be described uniformly as $(\mathbb{F}_2^n/C)\rtimes \Aut(C)$ (cf. Lemma
\ref{code aut}), where the vertex set $\mathbb{F}_2^n/C$ acts by translation and $\Aut(C)$ acts by
permuting coordinates.  Moreover, when $C$ is an even code and $d\geq 5$, the automorphism group of the halved graph of $\Gamma(C)$
 always contains $(E_n/C)\rtimes \Aut(C)$ (cf. Lemma \ref{aut2}), and is in fact equal to this group
 for $d\geq 7$ except when $C$ is the zero code and $n=4$ (cf. Lemma \ref{rank 3 code}).

%
%

\subsection{Graphs of Corollary \ref{Hall Shult}}
\label{HS}

The graphs of Corollary \ref{Hall Shult} that are not described in Theorem \ref{main} are precisely
the locally $\overline{T}_n$ graphs for $n\geq 5$, classified by Hall and Shult \cite[Theorem
2]{HalShu1985}. The graph $\overline{T}_{n+2}$ is itself locally $\overline{T}_n$ for $n\geq 5$.
This graph is strongly regular and distance-transitive with automorphism group $S_{n+2}$ and vertex
stabiliser $S_n\times C_2$, where $C_2$ fixes the neighbourhood of the vertex pointwise.

The remaining locally $\overline{T}_n$ graphs only occur when $n=5$ or 6. When $n=5$, the graph
$\overline{T}_5$ is isomorphic to the Petersen graph. Graphs that are locally Petersen were
classified by Hall \cite{Hall1980}. Besides $\overline{T}_7$, there are two such graphs, both of
which are commuting involutions graphs. Given a group $G$ and a conjugacy class $\mathscr{C}$ of involutions in $G$, the \textit{commuting
involutions graph of $\mathscr{C}$ in $G$} has vertex set $\mathscr{C}$, where two
(distinct) involutions are adjacent whenever they commute.
 For example, $\overline{T}_n$ is 
the commuting involutions graph of the class of transpositions in $S_n$.

One of the exceptional locally Petersen graphs is the \textit{Conway-Smith graph}, which is the commuting graph of transposition preimages in the group $3. S_7$. It is distance-transitive
and has automorphism group $3. S_7$ and vertex stabiliser $S_5\times C_2$, where $C_2$ fixes the
neighbourhood of the vertex pointwise. See \cite[Theorem 13.2.3]{BroCohNeu1989} for more details.

The other exceptional locally Petersen graph is the commuting involutions graph of the conjugacy class of the involutory Galois field
automorphism in the group $\PSigmaL_2(25)$. It is distance-transitive and has automorphism group
$\PSigmaL_2(25)$ and vertex stabiliser $S_5\times C_2$, where $C_2$ fixes the neighbourhood of the
vertex pointwise. See \cite[Proposition 12.2.2]{BroCohNeu1989} for more details.

When $n=6$, there are again two locally $\overline{T}_6$ graphs besides $\overline{T}_8$. Let
$\Sp_{2n}(2)$ denote the graph whose vertices are the non-zero vectors of a $2n$-dimensional
$\mathbb{F}_2$-vector space, with two vectors adjacent whenever they are perpendicular with respect
to a given non-degenerate symplectic form. The locally $\Sp_{2n}(2)$ graphs were classified in
\cite[Theorem 5]{HalShu1985}, and since $\overline{T}_6\simeq \Sp_4(2)$, this classification
determines the locally $\overline{T}_6$ graphs. These graphs all arise as subgraphs of $\Sp_6(2)$,
including $\overline{T}_8$, which is isomorphic to the complement of a hyperbolic quadric in
$\Sp_6(2)$.

One of the exceptional locally $\overline{T}_6$ graphs is the complement of an elliptic quadric in $\Sp_6(2)$.
This graph has 36 vertices and diameter
2. Using {\sf GAP} \cite{GAP4,FinInG,Grape}, we determined that it is distance-transitive with the following distance distribution diagram.

{\centering \includegraphics{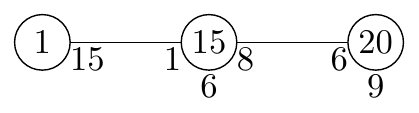} \par} \noindent We also determined that it has automorphism group
$\POmega_6^-(2)\rtimes 2$ and vertex stabiliser $S_6\times C_2$, where $C_2$ fixes the neighbourhood of the
vertex pointwise.

The other exceptional locally $\overline{T}_6$ graph is the complement of a hyperplane in $\Sp_6(2)$. This graph has 32 vertices and diameter 3. Using {\sf GAP} \cite{GAP4,FinInG,Grape}, we determined that it  is distance-transitive with the
following distance distribution diagram.

{\centering \includegraphics{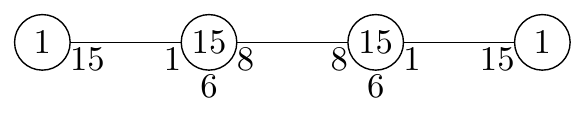} \par} \noindent We also determined that it has automorphism group
$2^5\rtimes S_6$ and vertex stabiliser $S_6$. Interestingly, this graph is isomorphic to a connected
component in the distance 4 graph of $Q_6$ (where $x,y\in VQ_6$ are adjacent whenever
$d_{Q_6}(x,y)=4$). It is also isomorphic to the subgraph of the complement of $\tfrac{1}{2}Q_6$
where the edge between $x$ and $x+1$ is removed for all $x\in V(\tfrac{1}{2}Q_6)$.

%
%

\section{Rectagraphs} \label{rect}

Recall that a \textit{rectagraph} is a connected triangle-free graph in which every $2$-arc
determines a unique quadrangle. Equivalently, a rectagraph is a connected graph with $a_1=0$ and
$c_2=2$. By \cite[Proposition 1.1.2]{BroCohNeu1989}, every rectagraph is regular.

The most basic example of a rectagraph is also the most important one: the $n$-cube $Q_n$. The
$n$-cubes are extremal rectagraphs, for a rectagraph $\Pi$ of valency $n$ has at most $2^n$ vertices
with equality if and only if $\Pi$ is the $n$-cube \cite[Proposition 1.13.1]{BroCohNeu1989}.

Other examples of rectagraphs include coset graphs of binary linear codes with minimum distance at
least five, all of which are covered by the $n$-cube (cf. Lemma \ref{code} (iv)-(v)). We begin
with a fundamental generalisation of this result for a certain class of rectagraphs. Note that this
result can be proved under more general assumptions than $a_2=0$ and $c_3=3$, but these suffice for
our purposes.

\begin{lemma}[\cite{BroCohNeu1989}]
\label{pi} Let $\Pi$ be a rectagraph of valency $n$ where $a_2=0$ and $c_3=3$. For any $u\in V\Pi$
with neighbours $u_1,\ldots,u_n$, there exists a covering $\pi:Q_n\to\Pi$ such that $0\pi=u$ and
$e_i\pi=u_i$ for all $i\leq n$.
\end{lemma}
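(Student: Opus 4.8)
The plan is to follow \cite[\S 4.3B]{BroCohNeu1989}: first pin down the ball of radius $3$ about an arbitrary vertex of $\Pi$ and show it coincides with the corresponding ball of $Q_n$, and then use this local picture to build $\pi$ one weight at a time.

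First I would prove the following for \emph{every} vertex $w$ of $\Pi$, with neighbours $w_1,\dots,w_n$, using all four hypotheses. Since $\Pi$ is triangle-free with $c_2=2$, each $2$-arc $(w_i,w,w_j)$ lies in a unique quadrangle, so there is a unique common neighbour $w_{ij}\neq w$ of $w_i$ and $w_j$; these $\binom{n}{2}$ vertices are pairwise distinct and are exactly $\Pi_2(w)$ (every vertex of $\Pi_2(w)$ has two neighbours among $w_1,\dots,w_n$), and hence $w_i$ has neighbourhood $\{w\}\cup\{w_{ij}:j\neq i\}$. Now bring in $a_2=0$ and $c_3=3$: each $w_{ij}$ has exactly $n-2$ neighbours in $\Pi_3(w)$ (its other neighbours lie at distance $2$, which $a_2=0$ forbids), so counting edges between $\Pi_2(w)$ and $\Pi_3(w)$ (using $c_3=3$) gives $|\Pi_3(w)|=\binom{n}{3}$; a second count, of triples $(w_{ij},w_{k\ell},v)$ with $v\in\Pi_3(w)$ adjacent to both, shows that $w_{ij}$ and $w_{k\ell}$ have no common neighbour in $\Pi_3(w)$ when $\{i,j\}\cap\{k,\ell\}=\emptyset$; and a common index is impossible as well, since if some $v\in\Pi_3(w)$ were adjacent to $w_{ij},w_{ik},w_{i\ell}$ then $w_i$ and $v$ would have three common neighbours, contradicting $c_2=2$. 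Thus the three neighbours of each $v\in\Pi_3(w)$ in $\Pi_2(w)$ form a ``triangle'' $w_{ij},w_{ik},w_{jk}$, yielding a bijection $\Pi_3(w)\to\binom{n}{3}$ under which $v$ is adjacent precisely to the $w_{ab}$ with $\{a,b\}\subseteq\{i,j,k\}$. I expect this step — in particular the dichotomy ``triangle versus star'' and the exclusion of the star — to be where the real work lies.

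Next I would define $\pi\colon\mathbb{F}_2^n\to V\Pi$ by induction on weight: $0\pi:=u$, $e_i\pi:=u_i$, and for $\wt x=m+1\geq 2$ with support $S$, choosing any $\{l,l'\}\subseteq S$, let $x\pi$ be the common neighbour of $(x+e_l)\pi$ and $(x+e_{l'})\pi$ other than $(x+e_l+e_{l'})\pi$. This is well posed because, by the inductive local-bijectivity hypothesis (below) together with triangle-freeness, those two images are distinct and non-adjacent with $(x+e_l+e_{l'})\pi$ a common neighbour, so $c_2=2$ leaves exactly one other choice. Two things then have to be checked in tandem, by induction on weight: (a) $x\pi$ is independent of $\{l,l'\}$; it suffices to compare pairs sharing an index, and for $\{l,l'\}$ and $\{l,l''\}$ one passes to $z:=x+e_l+e_{l'}+e_{l''}$ and its sub-$Q_3$ inside $Q_n$ (the eight vertices obtained from $x$ by flipping a subset of $\{l,l',l''\}$), whereupon the radius-$3$ picture around $z\pi$ identifies both candidate values of $x\pi$ with the vertex of $\Pi_3(z\pi)$ labelled $\{l,l',l''\}$; (b) $\pi$ is a local bijection at every vertex — adjacency of $x\pi$ to each $(x+e_i)\pi$ drops straight out of the construction, and for distinctness the case $i,j\in S$ uses local bijectivity at the lower-weight vertex $x+e_i+e_j$, while the cases with an index outside $S$ follow from the construction itself (it explicitly picks ``the other'' common neighbour) together with a short $c_2=2$ argument. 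Finally, $\Pi$ is connected, so Lemma~\ref{onto} upgrades the local bijection $\pi\colon Q_n\to\Pi$ to a covering, and $0\pi=u$ and $e_i\pi=u_i$ hold by construction.

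The main obstacle is running those two inductions together cleanly. Since $\pi$ is generally far from injective — for instance it can be the $2$-to-$1$ map onto a folded cube — distances are not preserved, so one cannot argue via ``$d_\Pi(u,x\pi)=\wt x$''; every consistency statement has to be localised and propagated along the $Q_3$-subcubes of $Q_n$, using only the radius-$3$ structure established in the first step.
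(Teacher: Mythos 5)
Your argument is correct and follows essentially the same route as the paper: the paper simply cites \cite[Lemma 4.3.5 and the proof of Proposition 4.3.6]{BroCohNeu1989} for exactly the radius-$3$ local structure and the weight-by-weight construction that you reconstruct, and then, just as you do, upgrades the resulting local bijection to a covering via Lemma \ref{onto} using connectivity of $\Pi$. Your sketch supplies the cited details correctly (in particular the triangle-versus-star dichotomy in $\Pi_3(w)$ and the $Q_3$-subcube consistency check), so there is nothing to fix.
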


\begin{proof}
By \cite[Lemma 4.3.5]{BroCohNeu1989} and the proof of \cite[Proposition 4.3.6]{BroCohNeu1989}, there
exists a map $\pi:Q_n\to\Pi$ for which $0\pi=u$ and $e_i\pi=u_i$ for all $i\leq n$, and also if
$d_{Q_n}(x,y)\leq 2$ for $x,y\in VQ_n$, then $d_{Q_n}(x,y)=d_\Pi(x\pi,y\pi)$. Since $Q_n$ and $\Pi$
have valency $n$, it follows that $\pi$ is a local bijection, and so $\pi$ is a covering by Lemma
\ref{onto}.
\end{proof}

In fact, whenever a covering of a rectagraph by an $n$-cube exists, it is essentially unique.

\begin{lemma}
\label{unique} Let $\Pi$ be a rectagraph. Let $\pi:Q_n\to \Pi$ and $\theta:Q_n\to \Pi$ be coverings.
If $0\pi=0\theta$ and $e_i\pi=e_i\theta$ for all $i\leq n$, then $\pi=\theta$.
\end{lemma}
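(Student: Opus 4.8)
The plan is to exploit the rigidity of coverings onto a connected graph (here $\Pi$ is connected since it is a rectagraph). Since $Q_n$ is connected, it suffices to show that the set $S := \{x \in VQ_n : x\pi = x\theta\}$ is all of $VQ_n$. We already know $0 \in S$ and $e_i \in S$ for all $i \leq n$, so $S$ is nonempty; the strategy is to show $S$ is ``closed under taking neighbours'' in a suitable sense, and then invoke connectedness of $Q_n$ to conclude $S = VQ_n$.

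First I would set up the induction on Hamming weight (equivalently, on $d_{Q_n}(0,x)$). The base cases $\wt{x} \leq 1$ are given. For the inductive step, take $x \in VQ_n$ with $\wt{x} = m \geq 2$, and choose two distinct neighbours $y, z$ of $x$ of weight $m-1$ (these exist because $m \geq 2$, so we may flip two distinct nonzero coordinates of $x$); by induction $y\pi = y\theta$ and $z\pi = z\theta$. Now $x$ is the unique common neighbour of $y$ and $z$ other than the weight-$(m-2)$ vertex $w := y \wedge z$ (here I am using that $Q_n$ is a rectagraph: the $2$-arc $(y,w,z)$ lies in the unique quadrangle $y,w,z,x$, so $\Gamma(y)\cap\Gamma(z) = \{w,x\}$). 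Since $\pi$ is a covering, it is a local bijection, and by Lemma~\ref{cover}(i) the unique neighbour of $y$ mapping to $x\pi$ under $\pi$ is some vertex; I claim it must be $x$ on the $\pi$ side and also $x$ on the $\theta$ side, and that these coincide. More carefully: $x\pi$ is a common neighbour of $y\pi = y\theta$ and $z\pi = z\theta$ in $\Pi$; likewise $x\theta$ is a common neighbour of $y\theta$ and $z\theta$. The vertex $w\pi = w\theta$ is one such common neighbour. If I can rule out $x\pi = w\pi$, then $x\pi$ and $x\theta$ both lie in $(\Gamma_\Pi(y\pi)\cap\Gamma_\Pi(z\pi))\setminus\{w\pi\}$, which has size at most one because $\pi$ (being a local bijection composed with the rectagraph structure of $\Pi$) sends the quadrangle $w,y,x,z$ to a quadrangle and $\Pi$ has $c_2 = 2$; hence $x\pi = x\theta$, completing the step.

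The main obstacle is the degenerate possibility $x\pi = w\pi$, i.e.\ that $\pi$ (or $\theta$) identifies the two ``opposite corners'' $x$ and $w$ of the quadrangle. I would handle this by noting that a covering restricts to a bijection $\Gamma(y) \to \Pi(y\pi)$; since $x$ and $w$ are both neighbours of $y$ and $x \neq w$, we get $x\pi \neq w\pi$ directly from injectivity of $\pi$ on $\Gamma(y)$. (This is exactly the local-bijection hypothesis, and the same argument applies to $\theta$.) With that degeneracy excluded, the counting argument above goes through verbatim, and since every $x$ with $\wt x \geq 2$ has (at least) two distinct weight-$(m-1)$ neighbours, the induction closes. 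Because $Q_n$ is connected and every vertex has a well-defined weight, this establishes $S = VQ_n$, i.e.\ $\pi = \theta$.

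One remark on bookkeeping: the induction as phrased is on distance from $0$, but the argument only ever uses neighbours of strictly smaller weight, so no circularity arises; alternatively, one could phrase it as: $\{x : x\pi = x\theta\}$ is a nonempty subset of $VQ_n$ closed under ``if $x$ is in it and $y$ is a neighbour with a common neighbour $z$ of $x,y$ that is also in the set, then the fourth corner of the quadrangle is in the set'', and such a set, containing $0$ and all $e_i$, must be everything by connectedness. I expect the local-bijection step (excluding $x\pi = w\pi$) to be the only genuinely essential point; everything else is the standard rectagraph quadrangle bookkeeping already encoded in $a_1 = 0$, $c_2 = 2$.
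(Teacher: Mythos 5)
Your proof is correct and follows essentially the same route as the paper's: induction on the Hamming weight of $x$, comparing $x\pi$ and $x\theta$ against the three lower-weight corners $x+e_i$, $x+e_j$, $x+e_i+e_j$ of a quadrangle and using the rectagraph conditions ($a_1=0$, $c_2=2$) to conclude they coincide. The only presentational difference is that the paper compresses your non-degeneracy step ($x\pi\neq(x+e_i+e_j)\pi$) into the remark that any covering maps quadrangles to quadrangles, which is precisely the local-injectivity observation you spell out.
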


\begin{proof}
 We prove that $x\pi=x\theta$ for all $x\in VQ_n$ by induction on $\wt{x}$. If $\wt{x}\leq 1$, then
 $x\pi=x\theta$ by assumption, so we may assume that $\wt{x}\geq 2$. Let $i$ and $j$ be non-zero
 coordinates of $x$. By induction, $(x+e_i)\pi=(x+e_i)\theta$, $(x+e_j)\pi=(x+e_j)\theta$ and
 $(x+e_i+e_j)\pi=(x+e_i +e_j)\theta$. But any covering maps quadrangles to quadrangles, so $x\pi$
 and $x\theta$ are both vertices of $\Pi$ that are adjacent to $(x+e_i)\pi$ and $(x+e_j)\pi$ but
 distinct from $(x+e_i+e_j)\pi$. Thus $x\pi=x\theta$.
\end{proof}

The next two results are slight generalisations of some unpublished work by Matsumoto
\cite{Mat1991}, who observed that a certain type of rectagraph covered by $Q_n$ must also be a normal
quotient of $Q_n$ \cite[Proposition 2]{Mat1991} and then determined the automorphism group of this
normal quotient \cite[Lemma 5]{Mat1991}.

First we recall some notation. Let $\Gamma$ and $\Pi$ be graphs, and let $\pi:\Gamma\to\Pi$ be a
covering. Recall from \S \ref{covering maps} that $K^\pi=\{g\in \Aut(\Gamma):g\pi=\pi\}\leq
\Aut(\Gamma)$, and recall from \S \ref{Q graphs} that $N^\pi$ denotes the normaliser of $K^\pi$ in $\Aut(\Gamma)$. The groups
$K^\pi$ and $N^\pi$ play a vital role in the study of rectagraphs, as we now see. Note that 
 we write elements
of $\Aut(Q_n)=\mathbb{F}_2\wr S_n$ in the form $(x,\sigma)$ where $x\in \mathbb{F}_2^n$ and $\sigma\in S_n$.

\begin{lemma}
\label{preimage K} Let $\Pi$ be a rectagraph and $\pi:Q_n\to\Pi$ a covering. Then $v\pi^{-1}$ is a
regular $K^\pi$-orbit for all $v\in V\Pi$.
\end{lemma}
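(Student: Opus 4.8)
The plan is to show two things: first that $K^\pi$ acts semiregularly on $VQ_n$ (so that every $K^\pi$-orbit is regular), and second that $K^\pi$ acts transitively on each fibre $v\pi^{-1}$. The first part is immediate from Lemma~\ref{ker}: since $Q_n$ is connected, $K^\pi_x = 1$ for every $x \in VQ_n$, which says precisely that $K^\pi$ is semiregular on $VQ_n$; hence every orbit of $K^\pi$ is regular, and in particular $|x^{K^\pi}| = |K^\pi|$ for all $x$. So the content is entirely in the second part, namely that $x^{K^\pi} = (x\pi)\pi^{-1}$ for each $x$; the inclusion $x^{K^\pi} \subseteq (x\pi)\pi^{-1}$ holds trivially because $g\in K^\pi$ means $x^g\pi = x(g\pi) = x\pi$.

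For the reverse inclusion, fix $v \in V\Pi$ and pick $x \in v\pi^{-1}$; I want to show that $K^\pi$ is transitive on $v\pi^{-1}$. The key is to combine Lemma~\ref{unique} (essential uniqueness of coverings $Q_n \to \Pi$) with the rich automorphism group $\Aut(Q_n) = \mathbb{F}_2 \wr S_n$. Given any other $y \in v\pi^{-1}$, I first use the translation subgroup $\mathbb{F}_2^n \leq \Aut(Q_n)$: the translation $t$ by $x+y$ sends $x$ to $y$. Now $t\pi$ is another covering $Q_n \to \Pi$ with $x(t\pi) = y\pi = v = x\pi$, but $t\pi$ need not agree with $\pi$ on the neighbours of $x$. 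To fix this, observe that $\pi$ restricts to a bijection $Q_n(x) \to \Pi(v)$ and $t\pi$ also restricts to a bijection $Q_n(x) \to \Pi(v)$; composing, there is a permutation of $Q_n(x)$ realised by some $\sigma \in S_n$ acting on coordinates and fixing (after conjugating the wreath-product element appropriately by $t$) the vertex $x$, so that the automorphism $g := t\sigma'$ — with $\sigma'$ the element of $\Aut(Q_n)$ stabilising $x$ and inducing the required coordinate permutation on $Q_n(x)$ — satisfies $x^g = y$ and $(x+e_i)^g \pi = (x+e_i)\pi$ for all $i$. Then $g\pi$ and $\pi$ are coverings agreeing on $x$ and on all neighbours of $x$; by (a translated/relabelled form of) Lemma~\ref{unique}, $g\pi = \pi$, so $g \in K^\pi$ and $x^g = y$. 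Hence $K^\pi$ is transitive on $v\pi^{-1}$, which together with semiregularity gives that $v\pi^{-1}$ is a regular $K^\pi$-orbit.

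The main obstacle is the bookkeeping in the middle step: Lemma~\ref{unique} is stated for coverings that agree on $0$ and on $e_1,\dots,e_n$, so I must either reprove it with base vertex $x$ in place of $0$ (the induction on weight goes through verbatim using distance from $x$ instead of weight) or conjugate everything by the translation taking $x$ to $0$, and I must check that the coordinate permutation correcting the neighbour map can genuinely be realised by an element of $\Aut(Q_n)_x$ — which it can, because $\Aut(Q_n)_x \cong S_n$ acts as the full symmetric group on the $n$ neighbours $x + e_1, \dots, x + e_n$ of $x$. Once that is in hand the argument closes cleanly. An alternative, possibly slicker route avoiding Lemma~\ref{unique} altogether: show directly that $K^\pi$ is transitive on $v\pi^{-1}$ by a connectivity/induction argument — given $x, y \in v\pi^{-1}$ and $g \in K^\pi$ already constructed matching them, lift along edges using Lemma~\ref{cover}(i) — but the uniqueness-of-covering approach is the most economical given what has already been proved.
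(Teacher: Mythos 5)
Your proposal is correct and is essentially the paper's own argument: semiregularity of $K^\pi$ from Lemma~\ref{ker}, then an explicit element of $\Aut(Q_n)=\mathbb{F}_2^n\rtimes S_n$ (a translation corrected by a coordinate permutation, using that the vertex stabiliser acts as the full $S_n$ on the $n$ neighbours) that agrees with $\pi$ on a closed neighbourhood, pinned down by the uniqueness Lemma~\ref{unique}. The only difference is organisational: the paper first reduces to the fibre over $0\pi$ via Lemma~\ref{cover}(ii) (all fibres are $K^\pi$-invariant and of equal size, and all orbits are regular, so transitivity on one fibre forces it on all), which lets it apply Lemma~\ref{unique} verbatim at base vertex $0$ and sidesteps the relabelled/translated version of that lemma you correctly note you would need at a general base vertex $x$.
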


\begin{proof}
 Let $u:=0\pi$. For $x\in v\pi^{-1}$ and $g\in K^\pi$, we have $v=x\pi=x(g\pi)=x^g\pi$, so $v\pi^{-1}$ is a
 $K^\pi$-invariant set. Since $|v\pi^{-1}|=|u\pi^{-1}|$ by Lemma \ref{cover} (ii) and $K^\pi_x=1$
 for all $x\in VQ_n$ by Lemma \ref{ker}, it suffices to prove that $u\pi^{-1}$ is a $K^\pi$-orbit.
 If $\{0\}=u\pi^{-1}$, then $K^\pi=K^\pi_0=1$, so $u\pi^{-1}$ is a $K^\pi$-orbit, as desired.
 Otherwise, let $0\neq y\in u\pi^{-1}$. Then $\{e_i\pi:1\leq i\leq
 n\}=\Pi(0\pi)=\Pi(y\pi)=\{(y+e_i)\pi:1\leq i\leq n\},$ so there exists $\sigma\in S_n$ for which
 $e_i\pi=(y+e_{i^\sigma})\pi$ for all $i\leq n$. Let $g:=(y^{\sigma^{-1}},\sigma)\in
 \mathbb{F}_2^n\rtimes S_n=\Aut(Q_n)$. Then $0^g=y$ and $e_i^g=y+e_{i^\sigma}$ for all $i\leq n$. This
 implies that $g\pi$ and $\pi$ are coverings that agree on $\{0\}\cup Q_n(0)$, so $g\pi=\pi$ by
 Lemma \ref{unique}. Hence $g\in K^\pi$. Since $0^g=y$, it follows that $u\pi^{-1}$ is a
 $K^\pi$-orbit.
\end{proof}

Thus any rectagraph covered by an $n$-cube is a normal quotient of $Q_n$. We record this fact in Proposition \ref{K} below. Moreover, using Lemma
\ref{preimage K}, we can describe the automorphism group of a rectagraph covered by an $n$-cube in
terms of groups related to the covering.

\begin{prop}
\label{K} Let $\Pi$ be a rectagraph and $\pi:Q_n\to\Pi$ a covering. Then $\Pi\simeq (Q_n)_{K^\pi} $
and $ \Aut(\Pi)\simeq N^\pi/K^\pi$.
\end{prop}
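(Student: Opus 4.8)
The plan is to prove the two isomorphisms separately, with the first one essentially already in hand. For the statement $\Pi\simeq (Q_n)_{K^\pi}$, recall from \S\ref{Q graphs} that any graph covered by $Q_n$ is naturally isomorphic to the quotient of $Q_n$ by the partition $\mathcal{B}$ into fibres $v\pi^{-1}$, $v\in V\Pi$. By Lemma \ref{preimage K}, each fibre $v\pi^{-1}$ is a single (regular) $K^\pi$-orbit, so $\mathcal{B}$ is precisely the set of $K^\pi$-orbits on $VQ_n$, whence $\Gamma_{\mathcal{B}} = (Q_n)_{K^\pi}$ by definition of normal quotient, and $\Pi\simeq (Q_n)_{K^\pi}$.

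For the second isomorphism, the natural map $\pi':Q_n\to (Q_n)_{K^\pi}$ sending a vertex to its $K^\pi$-orbit is a covering: condition (i) of \S\ref{Q graphs} holds because $Q_n$ is bipartite and hence triangle-free (so no $K^\pi$-orbit contains an edge — more precisely one checks an orbit is an independent set, which follows since its fibre image under the isomorphism with $\Pi$ is a single vertex, $\Pi$ being triangle-free and the fibres being the blocks), and condition (ii) is exactly Lemma \ref{cover}(i) applied to $\pi$ transported across the isomorphism $\Pi\simeq (Q_n)_{K^\pi}$. Thus Lemma \ref{K cover} applies with $\Gamma = Q_n$, $M = K^\pi$: part (i) gives $K^{\pi'} = K^\pi$, and part (ii) gives an embedding $N^{\pi'}/K^{\pi'} \leq \Aut((Q_n)_{K^\pi})$. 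It remains to see that this embedding is onto and that $N^{\pi'} = N^\pi$. Since $K^{\pi'}=K^\pi$ and $N^\pi$ is by definition the normaliser of $K^\pi$ in $\Aut(Q_n)$, we have $N^{\pi'}=N^\pi$ immediately.

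The one genuine point to verify is surjectivity of $N^\pi/K^\pi \to \Aut((Q_n)_{K^\pi})$. Let $h\in \Aut((Q_n)_{K^\pi})$. I want to lift $h$ to an automorphism $g$ of $Q_n$ that normalises $K^\pi$. Here I use the rigidity supplied by Lemmas \ref{pi}, \ref{unique} and \ref{preimage K}: choose $x_0\in VQ_n$ with $(x_0^{K^\pi})^h = x_0^{K^\pi}$ after translating (or work with the vertex $u=0$ and its image), and list the neighbours; since $\pi'$ is a covering and $(Q_n)_{K^\pi}$ is a rectagraph, applying the construction in the proof of Lemma \ref{preimage K} — exactly the argument producing $g=(y^{\sigma^{-1}},\sigma)$ from a relabelling $\sigma$ of the neighbourhood forced by $h$ — yields $g\in\Aut(Q_n)$ with $g\pi' = \pi' h$ (using Lemma \ref{unique} to pin down $g$ on all of $VQ_n$, since $g\pi'$ and $\pi' h$ are coverings agreeing on $\{x_0\}\cup Q_n(x_0)$). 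Then for any $k\in K^\pi$ one has $(g^{-1}kg)\pi' = g^{-1}k(\pi' h) = g^{-1}(\pi' h) = \pi'$ — using $g\pi' = \pi' h$ twice and $k\pi'=\pi'$ — so $g^{-1}kg\in K^{\pi'}=K^\pi$, i.e. $g\in N^\pi$, and $g$ maps to $h$. This surjectivity step is the main obstacle, and it is really just a careful bookkeeping of which maps are coverings combined with the uniqueness Lemma \ref{unique}; everything else is formal. (Alternatively, and perhaps more cleanly for the write-up: since every automorphism of $\Pi\simeq (Q_n)_{K^\pi}$ lifts — by Lemma \ref{pi} and \ref{unique} applied to the rectagraph $(Q_n)_{K^\pi}$ — the composite $Q_n \xrightarrow{g}Q_n\xrightarrow{\pi'}(Q_n)_{K^\pi}$ realises $h$, and conjugating $K^\pi$ by this lift $g$ stabilises $\pi'$ hence lands in $K^\pi$; this is the same computation.)
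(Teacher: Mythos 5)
Your proof is correct and follows essentially the same route as the paper: Lemma \ref{preimage K} gives $\Pi\simeq (Q_n)_{K^\pi}$, Lemma \ref{K cover} gives $K^{\pi'}=K^\pi$ and the embedding $N^\pi/K^\pi\leq\Aut(\Pi)$, and surjectivity is obtained by lifting $h\in\Aut(\Pi)$ to some $g\in\Aut(Q_n)$ with $g\pi'=\pi'h$ via Lemma \ref{unique}, followed by the same conjugation computation; the only cosmetic difference is that you write the lift down explicitly as $(y^{\sigma^{-1}},\sigma)$, whereas the paper produces it as a covering $Q_n\to Q_n$ via Lemma \ref{pi} and then notes it must be bijective. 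One small wording fix: drop the ``after translating to a block fixed by $h$'' option (translations need not normalise $K^\pi$, so they need not induce automorphisms of the quotient) and keep the version you actually use, namely basing the construction at $0$ and taking $y$ in the fibre over $(0\pi')^h$.
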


\begin{proof}
By Lemma \ref{preimage K}, the set $v\pi^{-1}$ is a $K^\pi$-orbit for all $v\in V\Pi$, so there is a graph isomorphism $\alpha:(Q_n)_{K^\pi}\to\Pi$ defined by
$x^{K^\pi}\mapsto x\pi$ for all $x\in VQ_n$. Let $\pi':Q_n\to (Q_n)_{K^\pi}$ be the natural map.
Then $\pi'\alpha=\pi$, so $\pi'$ is a covering. Thus $K^\pi=K^{\pi'}$ by Lemma \ref{K cover}, and
since we then have $N^\pi=N^{\pi'}$, we may assume that $\Pi=(Q_n)_{K^\pi}$ and $\pi=\pi'$.

 By Lemma \ref{K cover}, we have $N^\pi/K^\pi\leq \Aut(\Pi)$. Let $h\in \Aut(\Pi)$ and define
 $\theta:=\pi h^{-1}$. Let $y\in u\theta^{-1}$ where $u:=0\pi$. Since $\theta:Q_n\to \Pi$ is a
 covering, it induces a bijection from $Q_n(y)$ onto $\Pi(u)$. Since $a_2(Q_n)=0$ and $c_3(Q_n)=3$,  Lemma \ref{pi}  implies that there exists a
 covering $g:Q_n\to Q_n$ for which $0 g=y$ and $e_ig=e_i\pi\theta^{-1}$ for all $ i\leq n$. Then
 $\pi$ and $g\theta$ agree on $\{0\}\cup Q_n(0)$, so $\pi=g\theta$ by Lemma \ref{unique}. Thus  $\pi h=g\theta h=g\pi$. Since $g$ is surjective, it must be injective, and so $g\in \Aut(Q_n)$. If $k\in K^\pi$, then $g^{-1}kg\pi=g^{-1}k\pi
 h=g^{-1}\pi h=\pi$, and so $g^{-1}kg\in K^\pi$. Thus $g\in N^\pi$. Recall that $N^\pi$ acts on
 $V\Pi$ by $(x\pi)^g:=(x^g)\pi$ for all $x\in VQ_n$. Hence $(x\pi)^ h=x(\pi h)=x(g\pi)=(x\pi)^g$ for all $x\in
 VQ_n$, and it follows that $N^\pi/K^\pi\simeq \Aut(\Pi)$.
\end{proof}

Next we see that $\Aut(\Pi)_u$ and $N_0^\pi$ are closely related. Note that if $(x,\sigma)\in
\Aut(Q_n)$ fixes $0$, then $x=0$. Hence we may view $N^\pi_0$ as a subgroup of $S_n$. In particular,
$N^\pi_0$ acts faithfully on $\tbinom{n}{i}$ for $i=1,2$.

\begin{lemma}
\label{stab} Let $\Pi$ be a rectagraph and $\pi:Q_n\to\Pi$ a covering. Let $0\pi:=u$. Then the
actions of $\Aut(\Pi)_u$ on $\Pi_i(u)$ and $N^\pi_0$ on $\tbinom{n}{i}$ are permutation isomorphic
for $i=1,2$.
\end{lemma}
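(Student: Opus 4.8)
The plan is to produce an explicit group isomorphism $\psi\colon N^\pi_0\to\Aut(\Pi)_u$ together with explicit bijections $\varphi_i\colon\tbinom{n}{i}\to\Pi_i(u)$ for $i=1,2$, and to check that $\psi$ and $\varphi_i$ intertwine the two actions. First I would pin down $\Pi$ as a normal quotient of $Q_n$: by Lemma \ref{preimage K} the fibres of $\pi$ are exactly the $K^\pi$-orbits on $VQ_n$, so $x^{K^\pi}\mapsto x\pi$ is an isomorphism $(Q_n)_{K^\pi}\to\Pi$, and after identifying $\Pi$ with $(Q_n)_{K^\pi}$ we may assume that $\pi$ is the natural quotient map (which is then a covering). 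Now Lemma \ref{K cover} applies with $\Gamma=Q_n$ and $M=K^\pi$: since $0\pi=u$, part~(iii) gives an isomorphism $\psi\colon N^\pi_0\to(N^\pi/K^\pi)_u$, $g\mapsto K^\pi g$, and parts~(i)--(ii) together with Proposition \ref{K} identify $(N^\pi/K^\pi)_u$ with $\Aut(\Pi)_u$, the action being $(x\pi)^g=(x^g)\pi$ for $g\in N^\pi_0$. Recall that $N^\pi_0\leq S_n$, so $g$ simply permutes the coordinates of $x\in VQ_n$; I write $g\psi$ for the corresponding automorphism of $\Pi$, which fixes $u$.

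Next I would set up the bijections $\varphi_i$. Since $\pi$ is a local bijection it preserves adjacency and is injective on every neighbourhood, and since $\Pi$ is triangle-free, the two endpoints of any $2$-arc of $Q_n$ have distinct, non-adjacent images under $\pi$; hence $\pi$ sends a vector of weight $i\in\{1,2\}$ to a vertex at distance exactly $i$ from $u$. Conversely, pulling a path $u\sim v\sim w$ in $\Pi$ back through $\pi$ (using that $v=e_k\pi$ for some $k$ and that $\pi$ restricts to a bijection $Q_n(e_k)\to\Pi(v)$) shows $\pi$ maps the weight-$i$ vectors onto $\Pi_i(u)$. Identifying the weight-$1$ and weight-$2$ vectors of $Q_n$ with $\tbinom{n}{1}$ and $\tbinom{n}{2}$ via $e_k\mapsto\{k\}$ and $e_{k,j}\mapsto\{k,j\}$, this yields surjections $\varphi_i\colon\tbinom{n}{i}\to\Pi_i(u)$. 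For $i=1$ it is a bijection because $\pi$ restricts to a bijection on $Q_n(0)$; for $i=2$ it is a bijection by the standard count $|\Pi_2(u)|=n(n-1)/2$ valid in any graph with $a_1=0$, $c_2=2$ and girth at least $4$, so that the two sides have the same cardinality.

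Finally I would verify the intertwining. For $g\in N^\pi_0\leq S_n$ we have $e_k^g=e_{k^g}$ and $e_{k,j}^g=e_{k^g,j^g}$, hence $(\{k\}^g)\varphi_1=e_{k^g}\pi=(e_k^g)\pi=(e_k\pi)^{g\psi}=(\{k\}\varphi_1)^{g\psi}$, and identically $(\{k,j\}^g)\varphi_2=(\{k,j\}\varphi_2)^{g\psi}$. Thus $\psi$ and $\varphi_i$ realise the required compatibility, and since $N^\pi_0$ acts faithfully on $\tbinom{n}{i}$ for $i=1,2$ (as noted just before the lemma), the two actions are permutation isomorphic by definition. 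I do not expect a genuine obstacle here; the only points needing care are the claim in the second paragraph that $\pi$ neither decreases distances up to $2$ nor misses any vertex of $\Pi_i(u)$ — which rests on $\Pi$ being triangle-free and on $c_2(\Pi)=2$ for the cardinality count — and keeping the right-hand $S_n$-action conventions consistent throughout.
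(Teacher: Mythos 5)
Your proposal is correct and follows essentially the same route as the paper: reduce to the normal quotient $(Q_n)_{K^\pi}$, obtain the isomorphism $\psi\colon N^\pi_0\to\Aut(\Pi)_u$ from Lemma \ref{K cover} and Proposition \ref{K}, take the bijections $\{i\}\mapsto e_i\pi$ and $\{i,j\}\mapsto e_{i,j}\pi$, and check the intertwining. The only difference is that you spell out the details the paper calls routine (in particular that $\{i,j\}\mapsto e_{i,j}\pi$ is a well-defined bijection onto $\Pi_2(u)$, via triangle-freeness, surjectivity and the count $|\Pi_2(u)|=\tbinom{n}{2}$), which is a sound way to fill that gap.
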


\begin{proof}
As in the proof of Proposition \ref{K}, there is no loss of generality in assuming that
$\Pi=(Q_n)_{K^\pi}$ and $\pi$ is the natural map. By Lemma \ref{K cover} and Proposition \ref{K},
there is a group isomorphism $\psi:N_0^\pi\to \Aut(\Pi)_u$  defined by $(x\pi)^{\sigma\psi}:=(x^\sigma)\pi$ for all $x\in VQ_n$ and $\sigma\in N^\pi_0$. Since $\pi$ is a covering, there is a bijection $\varphi_1:[n]\to \Pi(u)$ defined by $i\mapsto
e_i\pi$ for all $i\in [n]$, and since $\Pi$ is a rectagraph of valency $n$, there is a bijection
$\varphi_2:\tbinom{n}{2}\to \Pi_2(u)$ defined by $\{i,j\}\mapsto e_{i,j}\pi$ for all $\{i,j\}\in
\tbinom{n}{2}$. It is routine to verify that $\psi$ is a permutation isomorphism with respect to
each of these bijections.
\end{proof}

Note that since $N^\pi_0\leq S_n$, the vector space $\mathbb{F}_2^n$ is naturally an
$\mathbb{F}_2N^\pi_0$-module; indeed, it is the permutation module of $N^\pi_0$ over
$\mathbb{F}_2$. Then the set $E_n$ of vectors in $\mathbb{F}_2^n$ with even weight is an
$\mathbb{F}_2N^\pi_0$-submodule of $\mathbb{F}_2^n$. Moreover, we have the following important
observation.

\begin{lemma}
\label{mod} Let $\Pi$ be a rectagraph and $\pi:Q_n\to\Pi$ a covering. Then $K^\pi\cap
\mathbb{F}_2^n$ is an $\mathbb{F}_2N^\pi_0$-submodule of the permutation module $\mathbb{F}_2^n$ that does not contain
$E_n$.
\end{lemma}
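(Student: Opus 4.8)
The plan is to establish two things: first, that $K^\pi\cap\mathbb{F}_2^n$ is an $\mathbb{F}_2N_0^\pi$-submodule of the permutation module $\mathbb{F}_2^n$, and second, that it does not contain $E_n$.

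For the submodule claim, observe that $K^\pi\cap\mathbb{F}_2^n$ is the intersection of the subgroup $K^\pi\leq\Aut(Q_n)$ with the (normal) translation subgroup $\mathbb{F}_2^n$ of $\Aut(Q_n)=\mathbb{F}_2^n\rtimes S_n$, hence is a subgroup of $(\mathbb{F}_2^n,+)$ and therefore an $\mathbb{F}_2$-subspace. To see it is $N_0^\pi$-invariant, recall (as observed before Lemma \ref{stab}) that an element of $\Aut(Q_n)$ fixing $0$ has the form $(0,\sigma)$, so $N_0^\pi\leq S_n$; and for such $\sigma$ and $x\in\mathbb{F}_2^n$ one computes $(0,\sigma)^{-1}(x,1)(0,\sigma)=(x^\sigma,1)$ in $\mathbb{F}_2\wr S_n$, which is precisely the permutation-module action of $\sigma$ on $x$. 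Since $\mathbb{F}_2^n$ is normal in $\Aut(Q_n)$ and $N_0^\pi$ normalises $K^\pi$ by definition of $N^\pi$, for $x\in K^\pi\cap\mathbb{F}_2^n$ and $\sigma\in N_0^\pi$ we obtain $(x^\sigma,1)=(0,\sigma)^{-1}(x,1)(0,\sigma)\in K^\pi\cap\mathbb{F}_2^n$. Hence $K^\pi\cap\mathbb{F}_2^n$ is an $\mathbb{F}_2N_0^\pi$-submodule of $\mathbb{F}_2^n$.

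For the non-containment, suppose for contradiction that $E_n\subseteq K^\pi$. (The assertion is only of interest when $n\geq2$, since $E_n=\{0\}$ otherwise.) Then $e_1+e_2\in E_n\subseteq K^\pi$, and the translation by $e_1+e_2$, as an element of $K^\pi$, sends $e_1$ to $e_1+(e_1+e_2)=e_2$. Since every element of $K^\pi$ fixes $\pi$, it follows that $e_1\pi=e_2\pi$. But $e_1$ and $e_2$ are distinct vertices of $Q_n(0)$, and $\pi$ induces a bijection from $Q_n(0)$ onto $\Pi(0\pi)$, so $e_1\pi\neq e_2\pi$, a contradiction. Therefore $E_n\not\subseteq K^\pi$, and in particular $E_n\not\subseteq K^\pi\cap\mathbb{F}_2^n$.

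The argument is short and its steps are largely routine; the only points requiring a little care are matching the conjugation action of $N_0^\pi$ on the translation subgroup with the permutation-module action of $N_0^\pi$ on $\mathbb{F}_2^n$ (which is why one first records that $N_0^\pi\leq S_n$), and, for the second part, invoking the local-bijection property of $\pi$ on a single neighbourhood rather than any finer structure of $\Pi$. I do not anticipate a genuine obstacle.
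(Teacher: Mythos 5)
Your proof is correct and follows essentially the same route as the paper: the same conjugation computation $(0,\sigma)^{-1}(x,1)(0,\sigma)=(x^\sigma,1)$ for the submodule claim, and for non-containment the same idea of showing $e_1+e_2\in K^\pi$ would force two distinct neighbours of a common vertex to have the same image, contradicting the local-bijection property of the covering (the paper applies the translation to $0$ and uses the neighbourhood of $e_1$, you apply it to $e_1$ and use the neighbourhood of $0$ — an immaterial difference).
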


\begin{proof}
 If $x\in K^\pi\cap \mathbb{F}_2^n$ and
 $\sigma\in N^\pi_0$, then $(x^\sigma,1)=(0,\sigma)^{-1}(x,1)(0,\sigma)\in K^\pi$
 since $N^\pi_0$ normalises $K^\pi$, and  so $x^\sigma\in K^\pi\cap \mathbb{F}_2^n$. Thus $K^\pi\cap \mathbb{F}_2^n$ is an
 $\mathbb{F}_2N^\pi_0$-submodule of $\mathbb{F}_2^n$. If $K^\pi\cap \mathbb{F}_2^n\geq E_n$, then $e_{1,2}\in K^\pi\cap \mathbb{F}_2^n$,
 and so $0\pi=0^{e_{1,2}}\pi=e_{1,2}\pi$, but this is impossible since $\pi$ is a covering and the vertices $0$ and
 $e_{1,2}$ are both neighbours of $e_1$ in $Q_n$.
 \end{proof}

Here is one useful application of Lemma \ref{mod}, for which we require the following definition. Let $G\leq S_n$. Note that $E_n$ and $\{0,1\}$ are both $\mathbb{F}_2G$-submodules of the permutation module $\mathbb{F}_2^n$. Then we may define the \textit{heart of $G$ over $\mathbb{F}_2$}  to be the $\mathbb{F}_2G$-module $E_n/(E_n\cap \{0,1\})$.

\begin{lemma}
\label{heart irred} Let $\Pi$ be a rectagraph and $\pi:Q_n\to\Pi$ a covering. Suppose that
$K^\pi\leq \mathbb{F}_2^n$ and $N^\pi_0$ is transitive on $[n]$. If the heart of $N_0^\pi$ over $\mathbb{F}_2$  is irreducible, then $\Pi\simeq Q_n$ or $\Box_n$.
\end{lemma}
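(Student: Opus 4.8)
The plan is to analyse the $\mathbb{F}_2 N_0^\pi$-module $K^\pi$ using Lemma \ref{mod}. Since we are assuming $K^\pi\leq \mathbb{F}_2^n$, Lemma \ref{mod} tells us that $K^\pi$ is an $\mathbb{F}_2 N_0^\pi$-submodule of the permutation module $\mathbb{F}_2^n$ not containing $E_n$. We also know $K^\pi\leq E_n$ is impossible to conclude directly, but we do know more: since $\Pi$ is triangle-free we have $e_i\notin K^\pi$ for all $i$ (otherwise $0\pi=e_i\pi$, contradicting that $\pi$ is a covering), so $K^\pi\cap\{0,1\}=\{0\}$ unless $1\in K^\pi$. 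The key structural input is that $N_0^\pi$ is transitive on $[n]$, so the submodule lattice of $\mathbb{F}_2^n$ is governed by the chain $\{0\}\leq \langle 1\rangle$ (or $\{0\}$ if $n$ is odd and we work mod the right things) and $E_n$, together with the heart $E_n/(E_n\cap\{0,1\})$.

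First I would recall the standard fact about the permutation module of a transitive group $G\leq S_n$ over $\mathbb{F}_2$: the only $\mathbb{F}_2 G$-submodules of $\mathbb{F}_2^n$ lying between $\{0,1\}\cap E_n$ and $E_n$ correspond to submodules of the heart, and any $\mathbb{F}_2 G$-submodule of $\mathbb{F}_2^n$ either is contained in $E_n$ or contains $1$ (since $\mathbb{F}_2^n/E_n$ is the trivial module and $\langle 1\rangle$ maps onto it when $n$ is odd, while when $n$ is even $1\in E_n$). Assuming the heart is irreducible, the full submodule lattice of $\mathbb{F}_2^n$ has very few members: depending on the parity of $n$, the submodules are among $\{0\}$, $\langle 1\rangle$, $E_n$, $\langle 1\rangle + (\text{preimages of heart submodules})$, and $\mathbb{F}_2^n$, but crucially any proper submodule of $E_n$ containing $\langle 1\rangle\cap E_n$ must be exactly $\langle 1\rangle\cap E_n$ by irreducibility of the heart. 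Then I would enumerate: since $K^\pi$ does not contain $E_n$, either $K^\pi\leq E_n$, in which case $K^\pi\leq \langle 1\rangle\cap E_n$ (so $K^\pi=\{0\}$ if $n$ is odd, and $K^\pi\in\{\{0\},\{0,1\}\}$ if $n$ is even), or $K^\pi\not\leq E_n$, in which case $1\in K^\pi$; but then $K^\pi/\langle 1\rangle$ embeds in $\mathbb{F}_2^n/\langle 1\rangle$, and one checks that $K^\pi$ corresponds to a submodule of the heart, which by irreducibility is $\{0\}$ or the whole heart — the latter forces $K^\pi\geq E_n$, a contradiction. So in every case $K^\pi\in\{\{0\},\{0,1\}\}$.

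To conclude, I would invoke Proposition \ref{K}: $\Pi\simeq (Q_n)_{K^\pi}=\Gamma(K^\pi)$. If $K^\pi=\{0\}$ then $\Gamma(\{0\})=Q_n$, the $n$-cube. If $K^\pi=\{0,1\}$ then $\Gamma(\{0,1\})=\Box_n$, the folded $n$-cube, by the definition of the folded cube in \S\ref{coset graphs}. (One should note $K^\pi=\{0,1\}$ can only occur when $1$ is fixed by the translation action in a way compatible with $\pi$ being a covering, i.e. when $\{0,1\}$ is loop-free, which holds precisely when $n\geq 2$, always true here.) This gives $\Pi\simeq Q_n$ or $\Box_n$, as claimed.

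\textbf{Main obstacle.} The delicate point is the module-theoretic book-keeping around the submodule $\langle 1\rangle$ and the distinction between the $n$ odd and $n$ even cases: when $n$ is even, $1\in E_n$ and $\langle 1\rangle$ sits inside the socle-type filtration differently than when $n$ is odd. I would need to be careful that "the heart is irreducible" genuinely pins down the submodule lattice of the \emph{full} permutation module $\mathbb{F}_2^n$ (not just of $E_n$), handling the top quotient $\mathbb{F}_2^n/E_n\cong\mathbb{F}_2$ and the bottom submodule $\langle 1\rangle$ correctly, and in particular ruling out a submodule $K^\pi$ with $1\in K^\pi$ but $K^\pi\cap E_n$ strictly between $\langle 1\rangle\cap E_n$ and $E_n$ — this is exactly where irreducibility of the heart is used. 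Everything else (the two coset-graph identifications at the end) is routine given \S\ref{coset graphs} and Proposition \ref{K}.
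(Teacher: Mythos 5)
Your proposal follows the paper's skeleton (Lemma \ref{mod} to restrict $K^\pi$, a classification of the $\mathbb{F}_2N_0^\pi$-submodules of $\mathbb{F}_2^n$, then Proposition \ref{K}), but at the middle step the paper simply quotes \cite[Lemma 2]{Mor1980} for the statement that the only submodules are $\{0\}$, $\{0,1\}$, $E_n$ and $\mathbb{F}_2^n$, whereas you try to derive that classification by hand from ``transitive plus irreducible heart'', and this derivation has genuine gaps. First, the claim that a submodule $M\not\leq E_n$ must contain $1$ is asserted, not proved: $M\not\leq E_n$ only says $M$ has the trivial module as a \emph{quotient} (via the weight-mod-$2$ map), and that does not produce a trivial \emph{submodule}; the fact that the fixed space of a transitive group is $\{0,1\}$ controls only one-dimensional submodules. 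Second, even granting that claim, your case analysis drops two cases. In the branch $1\in M$, $M\not\leq E_n$, the submodule of the heart you obtain is $(M\cap E_n)/(\{0,1\}\cap E_n)$; irreducibility gives ``zero or everything'', and you treat only the ``everything'' case, silently discarding the possibility $M\cap E_n=\{0,1\}$ with $\dim M=2$, i.e. $M=\langle 1,v\rangle$ for an odd-weight vector $v$. In the branch $M\leq E_n$ you assert $M\leq\{0,1\}\cap E_n$, which does not follow: the image of $M$ in the heart could be the whole heart while $M$ is a $G$-invariant complement to $\{0,1\}$ in $E_n$ (for $n$ even), and irreducibility of the heart does not exclude this.

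These are not bookkeeping issues that sharper module theory will fix, because for transitive but imprimitive groups the missing submodules really occur while the heart is irreducible. Take $N_0^\pi=S_m\wr S_2\leq S_{2m}$ with $m$ odd, preserving the blocks $S=\{1,\ldots,m\}$ and $S'$, and let $e_S,e_{S'}$ be their characteristic vectors. Then $E_n=\{0,1\}\oplus W$ with $W$ the set of vectors whose restrictions to $S$ and to $S'$ both have even weight; $W$ is isomorphic to the heart and is irreducible (it is induced from the irreducible heart of $S_m$, $m$ odd), yet both $W$ and $U:=\{0,e_S,e_{S'},1\}$ are further submodules not containing $E_n$ --- exactly the two cases your argument loses. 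Worse, with $m\geq 5$ the code $U$ has minimum weight $m\geq 5$, so $\Gamma(U)$ is a rectagraph covered by $Q_{2m}$ with $K^\pi=U$ and $N_0^\pi=S_m\wr S_2$ transitive with irreducible heart, so this configuration even satisfies the hypotheses as stated; the four-submodule classification genuinely relies on the setting in which the paper applies Mortimer's lemma (in every application of Lemma \ref{heart irred} in the paper, $N_0^\pi$ is $2$-transitive, which forbids invariant block systems). To repair your elementary route you would need either that stronger hypothesis on $N_0^\pi$, or additional input from the covering beyond Lemma \ref{mod} (for instance, minimum weight of $K^\pi$ at least $3$ rules out the complement $W$, but it does not rule out $U$). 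The final step, $K^\pi\in\{\{0\},\{0,1\}\}$ implying $\Pi\simeq Q_n$ or $\Box_n$ via Proposition \ref{K}, matches the paper and is fine.
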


\begin{proof}
  Lemma \ref{mod} implies that $K^\pi$ is an $\mathbb{F}_2N^\pi_0$-submodule of $\mathbb{F}_2^n$
  that does not contain $E_n$. Since $N^\pi_0$ is transitive on $[n]$ and the heart of $N^\pi_0$ over
  $\mathbb{F}_2$   is irreducible, the only $\mathbb{F}_2N^\pi_0$-submodules of
  $\mathbb{F}_2^n$ are $\{0\}$, $\{0,1\}$, $E_n$ and $\mathbb{F}_2^n$ by \cite[Lemma 2]{Mor1980}.
  Thus $K^\pi=\{0\}$ or $\{0,1\}$. Since $\Pi\simeq (Q_n)_{K^\pi}$ by Proposition \ref{K}, it
  follows that $\Pi\simeq Q_n$ or $\Box_n$.
\end{proof}

Here are some sufficient conditions for $K^\pi$ to be contained in $\mathbb{F}_2^n$.

\begin{lemma}
\label{rho} Let $\Pi$ be a rectagraph and $\pi:Q_n\to\Pi$ a covering. Then the following hold.
\begin{itemize}
 \item[(i)] If $N^\pi_0$ is $S_n$ or $A_n$ where $n\geq 5$, then $K^\pi\leq \mathbb{F}_2^n$.
 \item[(ii)] If $N^\pi_0$ is $2$-transitive on $[n]$ and $n$ is not a power of 2, then $K^\pi\leq
   \mathbb{F}_2^n$.
\end{itemize}
\end{lemma}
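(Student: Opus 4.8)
The plan is to project $K^\pi$ into $S_n$ via the quotient map $\rho\colon\Aut(Q_n)=\mathbb{F}_2^n\rtimes S_n\to S_n$, $(x,\sigma)\mapsto\sigma$, and to show that the image $P:=K^\pi\rho$ is trivial under each hypothesis; since $\ker\rho=\mathbb{F}_2^n$, this is equivalent to $K^\pi\leq\mathbb{F}_2^n$. First I would record two structural facts about $P$. One: $P$ is a $2$-group. Indeed, by Lemma \ref{ker} we have $K^\pi_x=1$ for every $x\in VQ_n$, so $K^\pi$ acts semiregularly on $VQ_n=\mathbb{F}_2^n$; hence $|K^\pi|$ divides $2^n$, and so $|P|$, a divisor of $|K^\pi|$, is a power of $2$. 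Two: $N^\pi_0$ normalises $P$ in $S_n$. Here we use that any element of $\Aut(Q_n)$ fixing $0$ lies in $S_n$, so $N^\pi_0$ may be regarded as a subgroup of $S_n$ contained in $N^\pi$; since $N^\pi$ normalises $K^\pi$ by definition, conjugating by an element of $N^\pi_0$ and applying the homomorphism $\rho$ shows that $N^\pi_0$ normalises $P$.

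The remainder is elementary permutation group theory, and I would run it once for both parts. Suppose $P\neq1$. In both (i) and (ii) the group $N^\pi_0$ is primitive on $[n]$ (in (ii) because a $2$-transitive group is primitive; in (i) because $A_n$ and $S_n$ are primitive for $n\geq5$). Since $N^\pi_0$ is transitive and normalises $P$, it permutes the $P$-orbits transitively, so these orbits all have the same size and form an $N^\pi_0$-invariant partition of $[n]$. By primitivity this partition is trivial, and since $P\neq1$ acts faithfully the orbits are not all singletons, so $P$ is transitive on $[n]$. Therefore $n$ divides $|P|$, and as $|P|$ is a power of $2$, so is $n$. In case (ii) this contradicts the hypothesis that $n$ is not a power of $2$, so there $P=1$ and $K^\pi\leq\mathbb{F}_2^n$.

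For case (i), where $n\geq5$ and $N^\pi_0\in\{A_n,S_n\}$, the paragraph above gives $|P|\geq n\geq5$. Now $P\cap A_n$ is normalised by $A_n$ (since $A_n\leq N^\pi_0$), so $P\cap A_n\unlhd A_n$; as $A_n$ is simple, either $P\cap A_n=A_n$ or $P\cap A_n=1$. The first is impossible: then $3$ divides $|A_n|$, which divides $|P|$, contradicting that $P$ is a $2$-group. Hence $P\cap A_n=1$, and $|P|=|PA_n|/|A_n|\leq|S_n|/|A_n|=2$, contradicting $|P|\geq5$. Therefore $P=1$, i.e. $K^\pi\leq\mathbb{F}_2^n$, completing the proof.

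I do not expect a genuine obstacle here. The only points requiring care are the bookkeeping of identifications — viewing $N^\pi_0$ as a subgroup of $S_n$ and checking that $\rho$ transports the relation ``$N^\pi$ normalises $K^\pi$'' down to a normalising relation in $S_n$ — together with the invocations of three standard facts: that $A_n$ (hence $S_n$) is primitive and simple for $n\geq5$, and that a transitive group has order divisible by its degree. It is worth noting that the hypothesis $n\geq 5$ in (i) is used exactly at the appeal to the simplicity of $A_n$, and the hypothesis that $n$ is not a power of $2$ in (ii) is used exactly to derive the final contradiction.
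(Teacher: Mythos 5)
Your proof is correct and follows essentially the same route as the paper: both project $K^\pi$ into $S_n$ via $\rho$, observe that the image is a $2$-group normalised by $N^\pi_0$ (the paper gets the $2$-group property from Lemma \ref{preimage K}, you from semiregularity via Lemma \ref{ker}), and then force the image to be trivial. The only cosmetic difference is in the last step: the paper conjugates directly using $2$-transitivity in (ii) and disposes of (i) with a terse appeal to which subgroups $A_n$ or $S_n$ can normalise, whereas you run a single primitivity/block argument showing a nontrivial image would be transitive of $2$-power order and finish (i) with the simplicity of $A_n$ --- all of which is sound.
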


\begin{proof}
Let $\rho:\mathbb{F}_2^n\rtimes S_n\to S_n$ be the natural projection map. Note that $K^\pi\rho$ is
a $2$-group, for $K^\pi\rho$ is a quotient of $K^\pi$, and the preimages of $\pi$ partition
$VQ_n=\mathbb{F}_2^n$ and have size $|K^\pi|$ by Lemma \ref{preimage K}.

Since $N^\pi_0$ normalises $K^\pi$ in $\Aut(Q_n)$, it follows that $N^\pi_0$ normalises $K^\pi\rho$
in $S_n$. In particular, if $N^\pi_0$ is $S_n$ or $A_n$ where $n\geq 5$, then since $K^\pi\rho$ is a
2-group, we must have $K^\pi\rho=1$. Thus $K^\pi\leq \mathbb{F}_2^n$, proving (i).

Now suppose that $N^\pi_0$ is $2$-transitive on $[n]$ and that there exists $\sigma\in K^\pi\rho$
where $i^\sigma=j$ and $i\neq j$. If $k\in[n]\setminus\{i\}$, then there exists $\tau\in N^\pi_0$
with $i^\tau=i$ and $j^\tau=k$, so $\tau^{-1}\sigma\tau$ maps $i$ to $k$ and  lies in $K^\pi\rho$.
Thus $K^\pi\rho$ is a transitive subgroup of $S_n$. In particular, $n$ divides $|K^\pi\rho|$, which
is a power of 2, proving (ii).
\end{proof}

Next we see that Proposition \ref{K} provides a uniform description of the automorphism groups of
many coset graphs of binary linear codes, including the graphs of Theorem \ref{main rect}.

\begin{lemma}
\label{code aut} Let $C$ be a binary linear $[n,r,d]$-code where $d\geq 5$. Then
$\Aut(\Gamma(C))=(\mathbb{F}_2^n/C)\rtimes \Aut(C)$.
\end{lemma}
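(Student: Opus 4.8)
The plan is to show that the covering $\pi: Q_n \to \Gamma(C)$ supplied by Lemma \ref{code} (iv) has $K^\pi = C$ (viewing $C$ as a translation subgroup of $\Aut(Q_n) = \mathbb{F}_2^n \rtimes S_n$), and then to apply Proposition \ref{K} to identify $\Aut(\Gamma(C))$ with $N^\pi/K^\pi$, where $N^\pi$ is the normaliser of $C$ in $\Aut(Q_n)$. First I would observe that since $d \geq 5$, Lemma \ref{code} (v) gives $a_1(\Gamma(C)) = 0$ and $c_2(\Gamma(C)) = 2$, so $\Gamma(C)$ is a rectagraph; thus Lemma \ref{K cover} applies to the natural map $\pi: Q_n \to (Q_n)_C = \Gamma(C)$ (this map is a covering by Lemma \ref{code} (iv), noting $C$ contains no $e_i$ since $d \geq 2$), and it yields $K^\pi = C$ together with $\Aut(\Gamma(C)) \simeq N^\pi/K^\pi$ by Proposition \ref{K}.

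It then remains to compute $N^\pi$, the normaliser of $C$ in $\mathbb{F}_2^n \rtimes S_n$. I would argue this in two pieces. For an element $(x,\sigma)$ to normalise the translation subgroup $C$, conjugation $(c,1) \mapsto (x,\sigma)^{-1}(c,1)(x,\sigma) = (c^\sigma, 1)$ must preserve $C$, so we need $C^\sigma = C$, i.e. $\sigma \in \Aut(C)$; the translation part $x$ is then unconstrained, since $\mathbb{F}_2^n$ is abelian and already normalises $C$. Hence $N^\pi = \mathbb{F}_2^n \rtimes \Aut(C)$ as a subgroup of $\Aut(Q_n)$. Quotienting by $K^\pi = C$ (which lies inside the normal subgroup $\mathbb{F}_2^n$) gives $N^\pi/K^\pi = (\mathbb{F}_2^n/C) \rtimes \Aut(C)$, and under the isomorphism of Proposition \ref{K} the coset $\mathbb{F}_2^n/C$ acts on $V\Gamma(C) = \mathbb{F}_2^n/C$ by translation while $\Aut(C)$ acts by permuting coordinates. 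This establishes $\Aut(\Gamma(C)) = (\mathbb{F}_2^n/C)\rtimes\Aut(C)$.

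I do not expect a serious obstacle here; the only point requiring a little care is that the isomorphism $\Aut(\Gamma(C)) \simeq N^\pi/K^\pi$ of Proposition \ref{K} genuinely carries the semidirect-product structure across in the claimed way — that is, that the induced action of a coset $K^\pi(x,1)$ is translation by $x+C$ and the induced action of $K^\pi(0,\sigma)$ is the coordinate permutation $\sigma$ on $\mathbb{F}_2^n/C$. This is immediate from the action formula $(y\pi)^g = (y^g)\pi$ recalled in the proof of Proposition \ref{K} together with the identification $y\pi = y + C$, but it is worth stating explicitly so that the final equality is an equality of permutation groups on $V\Gamma(C)$ and not merely an abstract isomorphism.
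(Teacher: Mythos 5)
Your proposal is correct and matches the paper's proof essentially step for step: the paper likewise invokes Lemma \ref{code} to see that the natural map $\pi:Q_n\to\Gamma(C)$ is a covering of a rectagraph, deduces $K^\pi=C$ from Lemma \ref{K cover}, identifies $N^\pi=\mathbb{F}_2^n\rtimes\Aut(C)$, and concludes via Proposition \ref{K}. Your extra care in verifying the normaliser computation and that the isomorphism of Proposition \ref{K} respects the stated permutation action is sound, just spelled out more explicitly than in the paper.
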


\begin{proof} 
By Lemma \ref{code}, the natural map $\pi:Q_n\to \Gamma(C)$ is a covering and $\Gamma(C)$ is a
rectagraph. Then $K^\pi=C$ by Lemma \ref{K cover}, so $N^\pi=\mathbb{F}_2^n\rtimes \Aut(C)$. Thus
$\Aut(\Gamma(C))\simeq N^\pi/K^\pi\simeq (\mathbb{F}_2^n/C)\rtimes \Aut(C)$ by Proposition \ref{K}.
\end{proof}

We are now in a position to prove Theorem \ref{main rect}.

\begin{proof}[Proof of Theorem \ref{main rect}]
Suppose that $\Pi$ is a rectagraph with $a_2=0$ and $c_3=3$, and suppose that there exists $u\in V\Pi$ such that $|\Pi(u)|\geq 4$ and $\Aut(\Pi)_u$ acts
$4$-homogeneously on $\Pi(u)$. By \cite[Proposition 1.1.2]{BroCohNeu1989}, $\Pi$ is regular of valency $n$ for some $n\geq 4$, and by Lemma \ref{pi}, there exists a covering
$\pi:Q_n\to \Pi $ such that $0\pi=u$.  Then $\Pi\simeq
(Q_n)_{K^\pi}$ by Proposition \ref{K} and $N^\pi_0$ is a $4$-homogeneous subgroup of $S_n$ by Lemma
\ref{stab}.

Since $\Pi$ is a rectagraph with $a_2=0$ and $c_3=3$, we obtain $|\Pi_2(u)|=\tbinom{n}{2}$ and
$|\Pi_3(u)|=\tbinom{n}{3}$ by a counting argument. Thus $ 1+n+\tbinom{n}{2}+\tbinom{n}{3}\leq
|V\Pi|$. But $|V\Pi|$ divides $2^n$ by Lemma \ref{cover} (ii), so if $n\leq 6$, then $|V\Pi|=2^n$, in which case $K^\pi=\{0\}$
and $\Pi\simeq Q_n$.

Similarly, if $n=7$, then $|V\Pi|=2^{6}$ or $2^7$, and $\Pi\simeq Q_7$ in the latter case, so we may
assume that $|V\Pi|=2^{6}$. Then $|K^\pi|=2$. Since $N^\pi_0$ is $4$-homogeneous, it is also
$3$-homogeneous, and so it is $2$-transitive by \cite[Theorem 2]{LivWag1965}. Then $K^\pi\leq
\mathbb{F}_2^7$ by Lemma \ref{rho} (ii), so $K^\pi=\{0,x\}$ for some $0\neq x\in \mathbb{F}_2^7$. Then $x^\sigma=x$ for
all $\sigma\in N^\pi_0$ by Lemma \ref{mod}. Since $N^\pi_0$ is transitive, it follows that $x=1$,
and so $\Pi\simeq \Box_7$.

Thus we may assume that $n\geq 8$. Then $N^\pi_0$ is $3$-transitive by \cite[Theorem 2]{LivWag1965}.
First suppose that $N^\pi_0$ is not $4$-transitive. Then $(N^\pi_0,n)$ is one of $(\PSL_2(8),9)$,
$(\PGaL_2(8),9)$ or $(\PGaL_2(32),33)$ by \cite{Kan1972}, in which case $K^\pi\leq \mathbb{F}_2^n$
by Lemma \ref{rho} (ii). Since the heart  of $N^\pi_0$ over $\mathbb{F}_2$ is irreducible by
\cite{Mor1980}, it follows from Lemma \ref{heart irred} that $\Pi\simeq Q_n$ or $\Box_n$.

Hence we may assume that $n\geq 8$ and $N^\pi_0$ is $4$-transitive. By the classification of the
finite simple groups, it follows that $(N^\pi_0,n)$ is one of $(S_n,n)$ or $(A_n,n)$ for $n\geq 8$, or $(M_{n},n)$
for $n=11$, $12$, $23$ or $24$ (cf. \cite[Theorem 4.11]{Cam1999}). Again, we have
$K^\pi\leq \mathbb{F}_2^n$ by Lemma \ref{rho}.

If $\Pi\simeq Q_n$ or $\Box_n$, then we are done, so we may assume that $K^\pi\neq \{0\},\{0,1\}$.
By Lemma \ref{heart irred}, the heart  of $N^\pi_0$ over $\mathbb{F}_2$  is therefore reducible, so $N^\pi_0$ is $M_{23}$ or $M_{24}$ by \cite{Mor1980}. Note that $K^\pi$ is
an $\mathbb{F}_2N^\pi_0$-submodule of $\mathbb{F}_2^n$ distinct from $E_n$ or $\mathbb{F}_2^n$ by
Lemma \ref{mod}. Using {\sf MAGMA} \cite{Magma}, it can be checked that if $N^\pi_0=M_{23}$, then
$K^\pi$ is either the binary Golay code $C_{23}$, in which case $\Pi\simeq \Gamma(C_{23})$, or
$K^\pi$ is $C_{23}\cap E_{23}$, in which case $\Pi\simeq \Gamma(C_{23}).2$ by Lemma \ref{code} (iii). Similarly, if
$N^\pi_0=M_{24}$, then $K^\pi$ is the extended binary Golay code $C_{24}$, in which case $\Pi\simeq
\Gamma(C_{24})$.

Conversely, suppose that $\Pi$ is one of the graphs described in (i)-(v) of the statement of the
theorem. Then $\Pi= \Gamma(C)$ where $C$ is a binary linear $[n,r,d]$-code with $d\geq 7$ (this holds for (iii) by Lemma \ref{code} (iii)), so $\Pi$
is a rectagraph with $a_2=0$ and $c_3=3$ by Lemma \ref{code}. Moreover, by Lemma \ref{code aut},
$\Aut(\Pi)_{0+C}$ is $S_n$ in cases (i)-(ii), $M_{23}$ in cases (iii)-(iv), and $M_{24}$ in case
(v), and so $\Aut(\Pi)_{0+C}$ is $4$-homogeneous on $\Pi(0+C)$.
\end{proof}

Though Corollary \ref{Cameron} follows immediately from Theorem \ref{main rect} since the $n$-cube is the only rectagraph of valency $n$ for $n\leq 3$, we provide a direct
proof of this result that does not use the classification of the finite simple groups.

\begin{proof}[Proof of Corollary \ref{Cameron}]
Let $\Pi$ be a rectagraph with $a_2=0$ and $c_3=3$, where for some $u\in V\Gamma$, the action of $\Aut(\Pi)_u$ on $\Pi(u)$ is permutation isomorphic to the natural action of $S_n$ or $A_n$ on $[n]$. By \cite[Proposition 1.1.2]{BroCohNeu1989}, $\Pi$ is regular of valency $n$, and by Lemma \ref{pi}, there is a covering $\pi:Q_n\to \Pi $ such that
$0\pi=u$. Then $\Pi\simeq (Q_n)_{K^\pi}$ by Proposition \ref{K} and $N^\pi_0$ is $S_n$ or $A_n$ by
Lemma \ref{stab}. For $n\leq 3$, we have $\Pi\simeq Q_n$, and for $n=4$, we have $\Pi\simeq Q_4$ since $a_2=0$ and $c_3=3$,
so we may assume that $n\geq 5$. Then
$K^\pi\leq \mathbb{F}_2^n$ by Lemma \ref{rho} (i), and it is routine to verify that the heart  of $N_0^\pi$ over
$\mathbb{F}_2$ is irreducible. Thus $\Pi\simeq Q_n$ or $\Box_n$ by Lemma
\ref{heart irred}, and $n\geq 7$ when $\Pi\simeq \Box_n$ since $a_2=0$ and $c_3=3$. The converse is straightforward.
\end{proof}

Next we see that Corollary \ref{Brouwer} is a natural consequence of Theorem \ref{main rect}.

\begin{proof}[Proof of Corollary \ref{Brouwer}]
Note that $\Pi$ is  a rectagraph. Suppose that there exists $u\in V\Pi$ such that   $|\Pi(u)|\geq 5$ and $\Aut(\Pi)_u$ is $5$-transitive on $\Pi(u)$. Any $5$-transitive group is
$4$-homogeneous, so $\Pi$ is one of the bipartite graphs listed in Theorem \ref{main rect}. Since
$M_{23}$ is not 5-transitive on $23$ points, it follows  that $\Pi$ is $Q_n$ where $n\geq 5$, or $\Box_n$ where $n$
is even (by Lemma \ref{code}) and $n\geq 8$, or $\Gamma(C_{24})$. The converse is straightforward.
\end{proof}

To finish this section, we see that the automorphism group of a rectagraph $\Pi$ is closely related
to that of a connected component in the distance 2 graph $\Pi_2$ of $\Pi$.

\begin{lemma}
\label{aut2} Let $\Pi$ be a rectagraph of valency $n\geq 3$, and let $\Gamma$ be a connected
component of $\Pi_2$. Then the map $\varphi:\Aut(\Pi)_{V\Gamma}\to \Aut(\Gamma)$ defined by
$g\mapsto g|_{V\Gamma}$ for all $g\in \Aut(\Pi)_{V\Pi}$ is an injective group homomorphism.
\end{lemma}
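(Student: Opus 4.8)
The plan is to check in turn that $\varphi$ is well-defined, a group homomorphism, and injective, with only the last point requiring any idea. For well-definedness, given $g$ in the setwise stabiliser $\Aut(\Pi)_{V\Gamma}$, note that $g$ permutes $V\Gamma$ and preserves $\Pi$-distances; since $u,v\in V\Gamma$ are adjacent in $\Gamma$ exactly when $d_\Pi(u,v)=2$, and $g$ carries such a pair to another pair in $V\Gamma$ at $\Pi$-distance $2$, the restriction $g|_{V\Gamma}$ indeed lies in $\Aut(\Gamma)$. The homomorphism property is immediate from the fact that restriction commutes with composition (recalling that actions are written on the right, so $(gh)|_{V\Gamma}=(g|_{V\Gamma})(h|_{V\Gamma})$).

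For injectivity, suppose $g\in\Aut(\Pi)_{V\Gamma}$ fixes every vertex of $\Gamma$. If $\Pi$ is not bipartite then $\Pi_2$ is connected, so $V\Gamma=V\Pi$ and $g=1$. So assume $\Pi$ is bipartite; since any two vertices of a bipartite graph at distance $2$ lie in the same part, the two halved graphs of $\Pi$ have vertex sets the two parts of the bipartition, and $V\Gamma$ is one of them, the other being some set $W$. As $g$ stabilises $V\Gamma$ setwise it also stabilises $W$ setwise. Fix $w\in W$. Because $\Pi$ is bipartite, $\Pi(w)\subseteq V\Gamma$, which $g$ fixes pointwise, so $\Pi(w^g)=\Pi(w)^g=\Pi(w)$. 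If $w\neq w^g$, then $w$ and $w^g$ lie in the same part and share the neighbour provided by any vertex of $\Pi(w)$ (nonempty since $n\geq 3$), so $d_\Pi(w,w^g)=2$; then $c_2(\Pi)=2$ forces $|\Pi(w)\cap\Pi(w^g)|=2$, contradicting $|\Pi(w)|=n\geq 3$. Hence $w=w^g$, so $g$ fixes $W$ pointwise as well and $g=1$.

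The one substantive point — and the place where the hypothesis $n\geq 3$ is used — is the remark that a rectagraph of valency at least $3$ cannot have two distinct vertices with the same neighbourhood; this is precisely what upgrades ``fixes one halved graph pointwise'' to ``fixes $\Pi$ pointwise''. (For $n=2$ the statement fails: the reflection of the $4$-cycle through one diagonal fixes that diagonal pointwise but interchanges the two vertices of the other.) Everything else is a routine verification, so I would keep the write-up brief.
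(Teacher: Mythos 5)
Your proof is correct, and the overall frame (well-definedness and the homomorphism property are routine; injectivity is trivial in the non-bipartite case; in the bipartite case one shows that an automorphism fixing the part $V\Gamma$ pointwise must also fix the other part) matches the paper's. Where you differ is in the key injectivity step. The paper argues quadrangle by quadrangle: for $y_1$ in the other part it picks $y_2\in\Pi_2(y_1)$ and uses that $g$ fixes the two common neighbours $x_1,x_2$, so $g$ permutes $\{y_1,y_2\}$; it then repeats this with a third vertex $y_3\in\Pi(x_1)\setminus\{y_1,y_2\}$ (this is where $n\geq 3$ enters) to pin down $y_1^g=y_1$. You instead observe directly that if $w^g\neq w$ then $w$ and $w^g$ are distinct vertices with identical neighbourhoods, which is impossible in a rectagraph of valency $n\geq 3$ since $c_2=2<n$. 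Your version is a little slicker and isolates the real content cleanly (no two vertices of a rectagraph of valency at least $3$ share a full neighbourhood), and your $4$-cycle example correctly shows the hypothesis $n\geq 3$ is sharp; the paper's argument uses exactly the same two ingredients ($c_2=2$ and valency at least $3$) but packaged as two applications of the unique-quadrangle property. Both are valid, and neither buys anything substantial over the other beyond economy of exposition.
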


\begin{proof}
 The map $\varphi$ is well-defined since automorphisms preserve distance, so $\varphi$ is a group
 homomorphism. If $\Pi$ is not bipartite, then $V\Gamma=V\Pi$, in which case $\varphi$ is injective,
 so we may assume that $\Pi$ is bipartite. Let $X:=V\Gamma$ and $Y:=V\Pi\setminus X$, and suppose that $g\in\Aut(\Pi)$ fixes $X$ pointwise. Let
$y_1\in Y$. Choose $y_2\in \Pi_2(y_1)$, and let $x_1$ and $x_2$ be the two vertices of $X$ lying in
the quadrangle determined by $y_1$ and $y_2$. Then $g$ either fixes or interchanges $y_1$ and $y_2$.
Let $y_3\in Y$ be adjacent to $x_1$ but distinct from $y_1$ and $y_2$, and let $x_3$ be the vertex
in $X$ distinct from $x_1$ lying in the unique quadrangle determined by $y_1$ and $y_3$. Again, $g$
either fixes or interchanges $y_1$ and $y_3$. Hence $g$ fixes $y_1$. As $y_1$ was arbitrary, it
follows that $g$ fixes $Y$ pointwise, and so $g=1$, as desired.
\end{proof}

We remark that $\Aut(\Pi)_{V\Gamma}$ and $ \Aut(\Gamma)$ need not be isomorphic in general: if $\Pi$ is
the $4$-cube, then $\Aut(\Pi)_{V\Gamma}=2^3\rtimes S_4$ while $\Aut(\Gamma)=2^4\rtimes S_4$.

%
%

\section{Locally triangular graphs} \label{loc tri}

Recall that for $n\geq 2$, the \textit{triangular graph} $T_n$ is the graph whose vertices are the
$2$-subsets of $\{1,\ldots,n\}$, where a pair of 2-subsets are adjacent whenever they intersect at
exactly one point. It is well-known that $\Aut(T_n)=S_n$ for $n\geq 3$ and $n\neq 4$, while
$\Aut(T_4)=S_4\times C_2$ (cf. \cite[Proposition 9.1.2]{BroCohNeu1989}).

Furthermore, recall that a graph $\Gamma$ is \textit{locally triangular} if $[\Gamma(u)]$ is
isomorphic to a triangular graph for all $u\in V\Gamma$ and
\textit{locally} $T_n$ if $[\Gamma(u)]\simeq T_n$ for all $u\in V\Gamma$. By \cite[Proposition
4.3.9]{BroCohNeu1989}, every connected locally triangular graph is locally $T_n$ for some $n$. Thus
there is no loss of generality in focusing on graphs that are locally $T_n$. Note that if $n$ is 2,
3 or 4, then $T_n$ is $K_1$, $K_3$ or $K_{3[2]}$ respectively, and so the only connected locally
$T_n$ graph is $K_2$, $K_4$ or $K_{4[2]}$ respectively.

Let $\Gamma$ be a graph that is locally $T_n$. If $n\geq 4$, then $\Gamma$ is not complete and has
two families of maximal cliques, corresponding to the two families of maximal cliques in $T_n$. A
maximal clique in $T_n$ consists either of the $2$-subsets of $[n]$ containing some fixed $i\in
[n]$, or the $2$-subsets of $[n]$ contained in some $3$-subset of $[n]$. In particular, the maximal
cliques in $\Gamma$ either have size 4 or $n$, and so they are easily distinguished for $n\geq 5$.

By \cite[Proposition 4.3.9]{BroCohNeu1989}, every connected graph $\Gamma$ that is locally $T_n$ is
a halved graph of some bipartite rectagraph $\Pi$ where $c_3(\Pi)=3$. If $n\leq 4$, then we may take
$\Pi$ to be the $n$-cube, and if $n\geq 5$, then $\Pi$ is defined to be the bipartite graph with
parts $V\Gamma$ and $\mathcal{B}$, where $\mathcal{B}$ is the set of $n$-cliques of $\Gamma$, and
vertices $u\in V\Gamma$ and $B\in \mathcal{B}$ are adjacent whenever $u\in B$. This was first
observed in \cite[Proposition 3]{Neu1985}. Moreover, we have the following.

\begin{lemma}
\label{unique rect} Let $\Pi$ be a bipartite rectagraph with halved graphs $\Gamma$ and $\Delta$. If
$\Gamma$ is locally $T_n$ where $n\geq 5$ and $\mathcal{B}$ is the set of $n$-cliques of $\Gamma$,
then there exists a bijection $\varphi:V\Delta\to\mathcal{B}$ such that $u\in V\Gamma$ is adjacent
to $v\in V\Delta$ if and only if $u\in v\varphi$. Furthermore, we have $c_3(\Pi)=3$.
\end{lemma}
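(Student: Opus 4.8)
The plan is to realise $\Pi$ as the incidence graph of the point--$n$-clique geometry of $\Gamma$, which is exactly the bipartite rectagraph produced by \cite[Proposition 4.3.9]{BroCohNeu1989} and described just before the lemma. First I would note that, $\Pi$ being connected and bipartite, $V\Gamma$ and $V\Delta$ are precisely its two parts, and that $\Pi$ has valency $n$: a rectagraph of valency $k$ has halved graphs of valency $\tbinom{k}{2}$ (the map sending $w\in\Pi_2(u)$ to the $2$-set $\Pi(u)\cap\Pi(w)$ is a bijection onto $\tbinom{\Pi(u)}{2}$, by $c_2(\Pi)=2$ and triangle-freeness), so $\tbinom{k}{2}=\tbinom{n}{2}$ forces $k=n$. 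Then for $v\in V\Delta$ the set $\Pi(v)$ is a clique of $\Gamma$ of size $n$, since any two of its members lie at distance $2$ in $\Pi$ via $v$; and it is a \emph{maximal} clique because, as observed above, for $n\geq 5$ the maximal cliques of $\Gamma$ have size $4$ or $n$. Hence $\varphi\colon V\Delta\to\mathcal{B}$, $v\mapsto\Pi(v)$, is well defined, and the equivalence $u\sim_\Pi v\iff u\in v\varphi$ is immediate.

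Next I would show $\varphi$ is a bijection. For injectivity: if $v\neq v'$ with $v\varphi=v'\varphi=B$, then $v,v'$ are distinct vertices of $V\Delta$ with the $|B|=n$ common neighbours in $B$, so $d_\Pi(v,v')=2$ and $c_2(\Pi)=2$ yields $n=|\Pi(v)\cap\Pi(v')|=2$, contradicting $n\geq 5$. For surjectivity, the key point is that each $u\in V\Gamma$ lies in exactly $n$ members of $\mathcal{B}$: an $n$-clique through $u$ meets $\Gamma(u)$ in an $(n-1)$-clique of $[\Gamma(u)]\simeq T_n$, and for $n\geq 5$ these are exactly the $n$ ``stars'' of $T_n$ (its maximal cliques of size $n-1>3$), each of which, together with $u$, does span an $n$-clique. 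On the other hand the restriction of $\varphi$ to $\Pi(u)$ lands among the $n$-cliques through $u$ (as $u\in\Pi(v)$ whenever $v\in\Pi(u)$), is injective, and $|\Pi(u)|=n$ by the valency computation; so it is a bijection onto the $n$-cliques through $u$. Since every member of $\mathcal{B}$ contains a vertex, $\varphi$ is onto.

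Finally, $(\mathrm{id}_{V\Gamma},\varphi)$ is then a graph isomorphism from $\Pi$ onto the bipartite graph on $V\Gamma\sqcup\mathcal{B}$ with incidence $\in$, i.e.\ onto the graph described before the lemma, so $c_3(\Pi)=3$ by \cite[Proposition 4.3.9]{BroCohNeu1989}. (If one prefers a direct check: since $\Pi$ is bipartite, a pair at distance $3$ has the form $(u,B)$ with $u\in V\Gamma$, $B\in\mathcal{B}$, $u\notin B$ and $\Gamma(u)\cap B\neq\emptyset$; choosing $u_1\in\Gamma(u)\cap B$ and computing inside $[\Gamma(u_1)]\simeq T_n$ shows $|\Gamma(u)\cap B|=3$ and that this triple is a triangle of $[\Gamma(u)]$, from which the number of $n$-cliques through $u$ meeting $B$ is also $3$.) I expect the main obstacle to be the surjectivity step — namely verifying that a vertex of a locally $T_n$ graph lies in exactly $n$ maximal cliques of size $n$, which is where $n\geq 5$ is indispensable (so that the $(n-1)$-cliques of $T_n$ are distinguishable from its $3$-cliques); everything else is bookkeeping with the rectagraph axioms $a_1=0$ and $c_2=2$ and the definition of halved graph.
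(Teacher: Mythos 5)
Your proof is correct, and its skeleton is the same as the paper's: you define $\varphi\colon v\mapsto \Pi(v)$, show $\Pi$ has valency $n$ so that $\Pi(v)$ is an $n$-clique, read off the incidence condition, and get injectivity from $c_2(\Pi)=2$. Where you genuinely diverge is the surjectivity step: the paper argues globally, using regularity of the vertex--clique incidence graph from the proof of \cite[Proposition 4.3.9]{BroCohNeu1989} to get $|V\Delta|=|V\Gamma|=|\mathcal{B}|$ and then invoking injectivity, whereas you argue locally, observing that the $n$-cliques through a fixed $u$ correspond to the $n$ stars of $[\Gamma(u)]\simeq T_n$ (here $n\geq 5$ is used, exactly as you say) and that $\varphi$ restricted to $\Pi(u)$ is an injection from an $n$-set into this $n$-set, hence onto. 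Your version is more self-contained, since it does not quote the internal structure of the construction in \cite{BroCohNeu1989}; the paper's version is shorter because it reuses that construction, which it needs anyway for the final claim $c_3(\Pi)=3$ (your main route to $c_3(\Pi)=3$, via the isomorphism with that incidence graph, is the same as the paper's). Two small remarks: your valency computation silently assumes $\Pi$ is regular, which is fine but should be attributed (the paper cites \cite[Proposition 1.1.2]{BroCohNeu1989} for exactly this); and in your optional direct check of $c_3(\Pi)=3$, knowing that $\Gamma(u)\cap B$ is a triangle of $[\Gamma(u)]$ is not quite enough for the count of $n$-cliques through $u$ meeting $B$ to be $3$ --- you also need that this triangle is of the ``$2$-subsets of a $3$-set'' type rather than contained in a star, which follows because two distinct $n$-cliques are images of distinct vertices of $V\Delta$ at distance $2$ and so share at most $c_2(\Pi)=2$ vertices. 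Since that check is only an aside, this does not affect the proof.
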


\begin{proof}
Recall that $\Pi$ is regular by \cite[Proposition 1.1.2]{BroCohNeu1989}.
Since $|\Pi_2(u)|=|\Gamma(u)|=\tbinom{n}{2}$ for any $u\in V\Gamma$, a counting argument shows that
$\Pi$ has valency $n$. In particular, if $v\in V\Delta$, then $\Pi(v)$ is an $n$-clique in $\Gamma$,
and so $\Pi(v)\in \mathcal{B}$. Define $\varphi:V\Delta\to \mathcal{B}$ by $v\mapsto \Pi(v)$ for all
$v\in V\Delta$. Then $u\in V\Gamma$ is adjacent to $v\in V\Delta$ if and only if $u\in v\varphi$.
Moreover, $\varphi$ is injective since $\Pi$ is a rectagraph and $n\geq 3$. Since the parts of a
regular bipartite graph must have the same size, and since $V\Gamma$ and $\mathcal{B}$ are the parts
of a regular bipartite graph by the proof of \cite[Proposition 4.3.9]{BroCohNeu1989}, it follows
that $|V\Delta|=|V\Gamma|=|\mathcal{B}|$. Thus $\varphi$ is surjective. Since $\Pi$ is then
isomorphic to the bipartite rectagraph constructed in the proof of \cite[Proposition
4.3.9]{BroCohNeu1989}, we have $c_3(\Pi)=3$.
\end{proof}

Thus a connected graph that is locally $T_n$ is a halved graph of a unique bipartite rectagraph
(this is also the case for $n\leq 4$). Note, however, that such a graph may also be the distance 2
graph of a non-bipartite rectagraph.

Lemma \ref{unique rect} has various applications. For example, it follows from Lemma \ref{unique
rect} and a straightforward exercise that if $\Pi$ is a bipartite rectagraph and $\Gamma$ is a halved graph
of $\Pi$, then $\Gamma$ is locally triangular if and only if $c_3(\Pi)=3$. In addition, Lemma
\ref{unique rect} enables us to determine the relationship between the automorphism group of a
bipartite rectagraph and its locally triangular halved graph.

\begin{lemma}
\label{autgamma} Let $\Pi$ be a bipartite rectagraph, and let $\Gamma$ be a halved graph of $\Pi$
that is locally $T_n$ where $n\geq 5$. Then the following hold.
\begin{itemize}
\item[(i)] $ \Aut(\Gamma)\simeq \Aut(\Pi)_{V\Gamma}$.
\item[(ii)] The actions of $\Aut(\Gamma)_u$ and $\Aut(\Pi)_u$ on $\Gamma(u)=\Pi_2(u)$ are faithful
  and permutation isomorphic for all $u\in V\Gamma$.
\end{itemize}
\end{lemma}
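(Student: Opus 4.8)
The plan is to deduce both statements from the structural results about rectagraphs covered by $n$-cubes together with Lemma~\ref{unique rect}. By Lemma~\ref{pi}, since $\Pi$ is a bipartite rectagraph with $c_3(\Pi)=3$ (the latter holds by Lemma~\ref{unique rect}) and $a_2(\Pi)=0$ (as $\Pi$ is bipartite), there is a covering $\pi:Q_n\to\Pi$ with $0\pi$ a chosen vertex of $\Gamma$, and the halved graph $\Gamma$ may be identified with the distance~2 graph component of $\Pi$ containing $0$, which is a halved graph of $Q_n$. First I would use Proposition~\ref{K} to write $\Aut(\Pi)\simeq N^\pi/K^\pi$ and use Lemma~\ref{aut2}, applied with the connected component $\Gamma$ of $\Pi_2$, to get an injective homomorphism $\varphi:\Aut(\Pi)_{V\Gamma}\to\Aut(\Gamma)$. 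The work is then to show this map is \emph{onto}, giving (i).

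For surjectivity I would argue as follows: let $h\in\Aut(\Gamma)$. Using that $\Gamma$ is locally $T_n$ with $n\geq 5$, the two families of maximal cliques of $\Gamma$ are distinguishable by size ($4$ versus $n$), so $h$ permutes the set $\mathcal{B}$ of $n$-cliques of $\Gamma$. By Lemma~\ref{unique rect}, $\mathcal{B}$ is in bijection with $V\Delta$ (the other half of $\Pi$) compatibly with the incidence "$u\in B$'', which is precisely adjacency in $\Pi$. Hence the pair (action of $h$ on $V\Gamma$, induced action on $\mathcal{B}$) defines a bijection of $V\Pi=V\Gamma\cup V\Delta$ preserving adjacency, i.e.\ an element $\tilde h\in\Aut(\Pi)$; it fixes $V\Gamma$ setwise and restricts to $h$ there, so $\tilde h\in\Aut(\Pi)_{V\Gamma}$ and $\tilde h\varphi=h$. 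This establishes (i). I expect the main obstacle to be the careful bookkeeping that the bijection built from $h$ and its action on $n$-cliques genuinely respects \emph{all} edges of $\Pi$ (both the $V\Gamma$–$V\Delta$ edges and the fact that there are no others), but this is essentially immediate from the explicit incidence description in Lemma~\ref{unique rect}.

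For (ii), fix $u\in V\Gamma$. Faithfulness of $\Aut(\Gamma)_u$ on $\Gamma(u)$: since $\Gamma$ is connected and locally $T_n$ with $n\geq 5$, an automorphism fixing $u$ and all of $\Gamma(u)$ fixes every $n$-clique through $u$ and then, by the local structure of $T_n$ at each neighbour, propagates to fix $\Gamma_2(u)$ pointwise, and by connectivity fixes $V\Gamma$ pointwise; so the kernel is trivial. (Alternatively one can invoke that $\Pi$ is covered by $Q_n$ and use Lemma~\ref{ker}/Lemma~\ref{stab}: $\Aut(\Pi)_u$ corresponds to $N^\pi_0\leq S_n$ acting faithfully on $\binom{n}{1}$ and $\binom{n}{2}$.) For the permutation isomorphism, combine part (i) with Lemma~\ref{stab}: under $\Aut(\Gamma)\simeq\Aut(\Pi)_{V\Gamma}$ the stabiliser $\Aut(\Gamma)_u$ maps isomorphically onto $\Aut(\Pi)_u$ (an automorphism of $\Pi$ fixing $u\in V\Gamma$ automatically stabilises $V\Gamma$ since $\Gamma$ is the component of $u$ in $\Pi_2$), and this isomorphism intertwines the action on $\Gamma(u)$ with the action on $\Pi_2(u)$ because $\Gamma(u)=\Pi_2(u)$ as sets with the restriction map being exactly $\varphi$. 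Thus the two permutation groups are identified, and both are faithful by the preceding paragraph. The only subtlety here is checking that $\Aut(\Pi)_u=\Aut(\Pi)_{V\Gamma}\cap\Aut(\Pi)_u$, i.e.\ that fixing $u$ forces stabilising the component $V\Gamma$ of $\Pi_2$, which holds since automorphisms of $\Pi$ preserve distance and hence the components of $\Pi_2$, and $u\in V\Gamma$.
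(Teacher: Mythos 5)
Your proposal is correct and follows essentially the same route as the paper: injectivity of the restriction map via Lemma~\ref{aut2}, surjectivity via the natural action of $\Aut(\Gamma)$ on the $n$-cliques identified with the other half of $V\Pi$ through Lemma~\ref{unique rect}, and part (ii) via the covering from Lemma~\ref{pi} and faithfulness from Lemma~\ref{stab}, together with the observation that $\Aut(\Pi)_u\leq\Aut(\Pi)_{V\Gamma}$ because automorphisms of $\Pi$ preserve the components of $\Pi_2$. (Your passing remark that the component of $\Pi_2$ containing $0\pi$ ``is a halved graph of $Q_n$'' is not accurate --- it is a halved graph of $\Pi$, merely covered by a halved $n$-cube --- but this is never used in your argument.)
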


\begin{proof}
 By Lemma \ref{unique rect}, we may identify $V\Pi\setminus V\Gamma$ with the set $\mathcal{B}$ of
 $n$-cliques of $\Gamma$ in such a way that $u\in V\Gamma$ is adjacent to $B\in \mathcal{B}$
 whenever $u\in B$. Note that $\Pi$ has valency $n\geq 5$. Then the map
 $\varphi:\Aut(\Pi)_{V\Gamma}\to \Aut(\Gamma)$ defined by $g\mapsto g|_{V\Gamma}$ for all $g\in
 \Aut(\Pi)_{V\Pi}$ is an injective group homomorphism by Lemma \ref{aut2}. Since $\Aut(\Gamma)$ acts
 naturally on $\mathcal{B}$, it follows that $\Aut(\Gamma)$ acts on $V\Pi=V\Gamma\cup \mathcal{B}$.
 This action is faithful and preserves adjacency in $\Pi$, and so $\varphi$ is surjective. Thus (i)
 holds.

Let $u\in V\Gamma$. Since $c_3(\Pi)=3$ by Lemma \ref{unique rect}, there exists a covering
$\pi:Q_n\to \Pi$ such that $0\pi=u$ by Lemma \ref{pi}. Then $\Aut(\Pi)_u$ acts faithfully on
$\Pi_2(u)$ by Lemma \ref{stab}. Observe that $\Aut(\Pi)_u\leq \Aut(\Pi)_{V\Gamma}$, for if $g\in
\Aut(\Pi)_u$ and $v\in V\Gamma$, then $d_\Pi(u,v^g)=d_\Pi(u,v)$, so $v^g\in V\Gamma$. Hence $
\Aut(\Pi)_u\simeq\{g|_{V\Gamma}:g\in \Aut(\Pi)_u\}= \Aut(\Gamma)_u$. Then $\Aut(\Gamma)_u$ acts
faithfully on $\Gamma(u)$, and the actions of $\Aut(\Gamma)_u$ and $\Aut(\Pi)_u$ on
$\Gamma(u)=\Pi_2(u)$ are permutation isomorphic.
\end{proof}

Note that Lemma \ref{autgamma} does not hold when $n=4$, for then $\Pi\simeq Q_4$, in which case
$\Aut(\Pi)_{V\Gamma}\not\simeq \Aut(\Gamma)$, as observed after the proof of Lemma \ref{aut2}, and
$\Aut(\Pi)_{u}\not\simeq \Aut(\Gamma)_u$, for $\Aut(\Pi)_{u}= S_4$ while $\Aut(\Gamma)_u= S_4\times
C_2$.

Lemma \ref{autgamma} has the following interesting consequence.

\begin{prop}
 \label{faithful} Let $\Gamma$ be a connected locally triangular graph. Then $\Aut(\Gamma)_u$ acts
 faithfully on $\Gamma(u)$ for all $u\in V\Gamma$.
\end{prop}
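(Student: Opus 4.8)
The plan is to use the reduction recalled in this section and then split into two cases according to the value of $n$. By \cite[Proposition 4.3.9]{BroCohNeu1989}, since $\Gamma$ is connected and locally triangular, it is locally $T_n$ for some $n\geq 2$, so it suffices to treat each such $n$.

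First I would dispose of $n\geq 5$, where essentially no new work is needed. As recalled just before Lemma \ref{unique rect}, a connected graph that is locally $T_n$ with $n\geq 5$ is a halved graph of a bipartite rectagraph $\Pi$, namely the one whose other part is the set $\mathcal{B}$ of $n$-cliques of $\Gamma$. Hence Lemma \ref{autgamma}(ii) applies directly to $\Gamma$ and $\Pi$, and it states precisely that $\Aut(\Gamma)_u$ acts faithfully on $\Gamma(u)=\Pi_2(u)$ for every $u\in V\Gamma$. So the entire content of this case is already packaged in Lemma \ref{autgamma}.

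It remains to handle $n\leq 4$ by hand, using that $T_2,T_3,T_4$ are $K_1,K_3,K_{3[2]}$, so that the only connected locally $T_n$ graphs are $K_2$, $K_4$, $K_{4[2]}$ respectively (as noted in this section). For $\Gamma=K_2$ the stabiliser $\Aut(\Gamma)_u$ is trivial; for $\Gamma=K_4$ we have $\Aut(\Gamma)_u=\Sym(\Gamma(u))\cong S_3$, which acts faithfully on $\Gamma(u)$ by definition. For $\Gamma=K_{4[2]}$, the key observation is that each vertex $u$ has a unique non-neighbour $u'$, namely its partner in the part of size $2$ containing it, so that $V\Gamma=\{u,u'\}\cup\Gamma(u)$; consequently any $g\in\Aut(\Gamma)_u$ fixing $\Gamma(u)$ pointwise also fixes $u$ and hence $u'$, so $g$ fixes $V\Gamma$ pointwise and $g=1$. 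This completes the proof. The only point requiring separate attention is the case $n=4$ (equivalently $\Gamma=K_{4[2]}$, $\Pi=Q_4$), which is exactly the case excluded from Lemma \ref{autgamma} because there $\Aut(\Pi)_u=S_4\not\cong S_4\times C_2=\Aut(\Gamma)_u$; beyond this small exception there is no real obstacle.
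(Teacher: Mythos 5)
Your proposal is correct and follows essentially the same route as the paper: reduce via \cite[Proposition 4.3.9]{BroCohNeu1989} to the locally $T_n$ case, apply Lemma \ref{autgamma}(ii) for $n\geq 5$, and check $K_2$, $K_4$, $K_{4[2]}$ directly for $n\leq 4$ (the paper simply calls this last step trivial, whereas you spell it out, including the $K_{4[2]}$ case that Lemma \ref{autgamma} excludes). Nothing further is needed.
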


\begin{proof}
 By \cite[Proposition 4.3.9]{BroCohNeu1989}, there exists an integer $n\geq 2$ such that $\Gamma$ is
 locally $T_n$, and $\Gamma$ is a halved graph of some bipartite rectagraph $\Pi$. If $n\leq 4$,
 then the result is trivial since $\Gamma$ is $K_2$, $K_4$ or $K_{4[2]}$, and if $n\geq 5$, then the
 result follows from Lemma \ref{autgamma}.
\end{proof}

To finish this section, we see that there are many examples of graphs that are locally $T_n$.

\begin{lemma}
\label{code tri} Let $C$ be a binary linear $[n,r,d]$-code where $n\geq 2$ and $d\geq 7$. Then any
connected component of $\Gamma(C)_2$ is locally $T_n$.
\end{lemma}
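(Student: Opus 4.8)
The plan is to exploit everything already assembled about coset graphs. By Lemma \ref{code}(vi), since $d\geq 7$ the graph $\Pi:=\Gamma(C)$ is a rectagraph with $a_2(\Pi)=0$ and $c_3(\Pi)=3$; moreover $\Pi$ is bipartite if and only if $C$ is even (Lemma \ref{code}(i)). In either case $\Gamma(C)_2$ has connected components, and the remark immediately after the proof of Lemma \ref{unique rect} (together with the identification in Lemma \ref{code}(ii)--(iii) and the observation after Lemma \ref{code}) tells us that a halved graph of a bipartite rectagraph with $c_3=3$ is locally triangular. So the first step is simply to record that a connected component $\Gamma$ of $\Gamma(C)_2$ is a halved graph of a bipartite rectagraph $\Pi'$ with $c_3(\Pi')=3$: if $C$ is even take $\Pi'=\Pi$, and if $C$ is not even take $\Pi'=\Gamma(C)_2$'s ambient bipartite double, namely $\Pi'=\Gamma(C).2\simeq\Gamma(C\cap E_n)$, whose halved graphs are isomorphic to $\Gamma(C)_2$ by Lemma \ref{code}(iii) and the final remark after Lemma \ref{code}.

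The heart of the argument is to pin down which triangular graph appears, i.e.\ to show $\Gamma$ is locally $T_n$ rather than merely locally triangular. Here I would work directly with the structure of the neighbourhoods exhibited in the proof of Lemma \ref{code}. Fix a vertex $x+C$ of $\Gamma$, regarded as lying in $E_n/C$ (we may assume $C\leq E_n$ after replacing $C$ by $C\cap E_n$). From Lemma \ref{code}(iv)--(vi), the vertices of $\Gamma$ at distance $2$ in $\Pi'$ from $x+C$ are exactly the cosets $x+e_{i,j}+C$ for $\{i,j\}\in\binom{n}{2}$, and since $d\geq 7 > 4$ these $\binom{n}{2}$ cosets are pairwise distinct (any coincidence $e_{i,j}+C=e_{k,l}+C$ would force a codeword of weight $\leq 4$). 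Thus $\{i,j\}\mapsto x+e_{i,j}+C$ is a bijection $\binom{n}{2}\to\Gamma(x+C)$. It remains to check this bijection is a graph isomorphism onto $[\Gamma(x+C)]$: two vertices $x+e_{i,j}+C$ and $x+e_{k,l}+C$ are adjacent in $\Gamma$ precisely when they are at distance $2$ in $\Pi'$, i.e.\ $e_{i,j}+e_{k,l}+C$ has a representative of weight $2$, i.e.\ $e_{i,j}+e_{k,l}$ itself has weight $2$ (again using $d\geq 7$ to rule out smaller-weight codewords adjusting the weight), which happens exactly when $|\{i,j\}\cap\{k,l\}|=1$. That is the adjacency rule of $T_n$, so $[\Gamma(x+C)]\simeq T_n$.

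Finally I would note that $x+C$ was an arbitrary vertex of the component $\Gamma$, and the same computation applies verbatim at every vertex, so $\Gamma$ is locally $T_n$; in particular it is locally triangular. I expect the only mildly delicate point — and the one worth spelling out carefully — is the repeated use of $d\geq 7$ (so $d>4$) to guarantee that distinct $2$-subsets give distinct cosets and that the weight of $e_{i,j}+e_{k,l}$ is unchanged modulo $C$; everything else is a routine unwinding of Lemma \ref{code} and the halved-graph description. One could alternatively phrase the whole thing more slickly by invoking the general fact (from the remark after Lemma \ref{unique rect}) that a halved graph of a bipartite rectagraph with $c_3=3$ is locally triangular, and then appeal to \cite[Proposition 4.3.9]{BroCohNeu1989} to upgrade ``locally triangular'' to ``locally $T_n$'' once $\Gamma$ is known to be connected; I would present the self-contained neighbourhood computation as the main line since it also yields the precise value $n$.
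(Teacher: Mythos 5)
Your proposal is correct and takes essentially the same route as the paper: the paper's proof simply defines the map $\{i,j\}\mapsto x+e_{i,j}+C$ from $T_n$ to $[\Gamma(x+C)]$ and calls the verification routine, and your neighbourhood computation (using $d\geq 7$ to get distinctness of the cosets $x+e_{i,j}+C$ and to ensure adjacency corresponds exactly to $|\{i,j\}\cap\{k,l\}|=1$) is precisely that routine check spelled out. The preliminary reduction replacing $C$ by $C\cap E_n$ and passing through halved graphs is harmless but unnecessary, since the computation works directly in $\Gamma(C)_2$ whether or not $C$ is even.
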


\begin{proof}
Let $\Gamma$ be a connected component of $\Gamma(C)_2$, and let $x+C\in V\Gamma$. Define a map
$\varphi: T_n\to [\Gamma(x+C)]$ by $\{i,j\}\mapsto x+e_{i,j}+C$ for all $\{i,j\}\in \tbinom{n}{2}$.
It is routine to verify that $\varphi$ is a graph isomorphism.
\end{proof}

%
%

\section{Locally rank 3 graphs} \label{loc rank 3}

In this section, we prove Theorem \ref{main} and Corollary \ref{Hall Shult}. We begin with some
preliminary observations about locally rank 3 graphs.

Let $G$ be a transitive permutation group on $\Omega$, and let $\omega\in\Omega$. A
\textit{suborbit} of $G$ is an orbit of the stabiliser $G_\omega$ on $\Omega$, and the \textit{rank}
of $G$ is the number of suborbits of $G$ (which is independent of the choice of $\omega$ by
transitivity). The rank is also the number of orbits of $G$ on $\Omega\times\Omega$.

Recall that a graph $\Gamma$ is \textit{locally rank 3 with respect to $G$} if $\Gamma$ has no
vertices with valency $0$ and $G\leq \Aut(\Gamma)$ such that $G_{u}^{\Gamma(u)}$ is transitive of
rank 3 on $\Gamma(u)$ for every $u\in V\Gamma$. We also say that $\Gamma$ is \textit{locally rank 3}
when $\Gamma$ is locally rank 3 with respect to some $G$. Since $G_u$ acts transitively on
$\Gamma(u)$ for all $u\in V\Gamma$, we obtain the following.

\begin{lemma}
\label{ver tran} Let $\Gamma$ be a connected graph that is locally rank $3$ with respect to $G$.
Then $\Gamma$ is $G$-edge-transitive. If $\Gamma$ is not bipartite, then $\Gamma$ is also
$G$-vertex-transitive.
\end{lemma}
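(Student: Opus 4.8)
The plan is to exploit connectedness together with the transitivity of each local action. First I would prove $G$-edge-transitivity. Fix an edge $\{u,v\}$ of $\Gamma$; since $G\leq\Aut(\Gamma)$ and the local action $G_u^{\Gamma(u)}$ is transitive, the stabiliser $G_u$ acts transitively on $\Gamma(u)$, so every edge incident to $u$ lies in the $G$-orbit of $\{u,v\}$. Now let $\{x,y\}$ be an arbitrary edge. Connectedness gives a path $u=w_0,w_1,\dots,w_k=x$ in $\Gamma$. I would argue by induction on $k$ that $\{w_0,w_1\}$ and $\{w_k,x'\}$ lie in the same $G$-orbit for any neighbour $x'$ of $w_k=x$: the inductive step uses that once some $g\in G$ sends $\{w_0,w_1\}$ to an edge at $w_k$, transitivity of $G_{w_k}$ on $\Gamma(w_k)$ lets us adjust $g$ so that the image is precisely $\{x,y\}$. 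Hence all edges are in one $G$-orbit, i.e. $\Gamma$ is $G$-edge-transitive. (One should note $\Gamma$ has at least one edge since it has no isolated vertices.)

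For the second assertion, suppose $\Gamma$ is not bipartite. Edge-transitivity already gives at most two vertex-orbits, the two ends of an edge; so it suffices to produce one $g\in G$ interchanging the ends of some edge, or equivalently to show the two ends of an edge lie in the same orbit. Since $\Gamma$ is not bipartite, it contains an odd closed walk, hence (taking a shortest such) an odd cycle $v_0,v_1,\dots,v_{2m}=v_0$. Walking around this cycle and using edge-transitivity: each edge $\{v_i,v_{i+1}\}$ is equivalent under $G$ to a fixed reference edge, and by composing suitable elements of the edge-stabilisers along the cycle one transports $v_0$ to $v_1$, $v_1$ to $v_2$, and so on; after going all the way round an odd cycle one finds an element of $G$ carrying $v_0$ to $v_1$. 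Concretely, using that $G$ is edge-transitive, there exists for each $i$ an element $g_i\in G$ with $\{v_i,v_{i+1}\}^{g_i}$ equal to the chosen reference edge $\{u,u'\}$; whether $g_i$ sends $v_i\mapsto u$ or $v_i\mapsto u'$ defines a sign, and since the cycle has odd length the product of signs around it forces some $g\in G$ to swap the two ends of the reference edge. Thus $u$ and $u'$ are in the same $G$-orbit, and with edge-transitivity this yields $G$-vertex-transitivity.

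The routine bookkeeping is the induction along paths and the sign-chasing around the odd cycle; the only genuinely delicate point is the second part, where one must be careful that edge-transitivity alone gives vertex-transitivity exactly when the graph is connected and \emph{not} bipartite — in the bipartite case the two ends of an edge may genuinely lie in distinct orbits (as happens for the bipartite graphs in Theorem \ref{main}). I expect the cycle-parity argument to be the main obstacle to state cleanly, though it is standard; alternatively one can phrase it as: a connected $G$-edge-transitive graph is either $G$-vertex-transitive or bipartite with the two parts being the $G$-orbits, which is a well-known fact whose proof is exactly the parity argument above.
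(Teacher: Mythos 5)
Your proposal is correct and follows essentially the same route as the paper: transitivity of $G_u$ on $\Gamma(u)$ propagated along paths gives $G$-edge-transitivity, and an odd-cycle parity argument then yields $G$-vertex-transitivity in the non-bipartite case. The paper packages the second step slightly differently (it notes that any two vertices with a common neighbour lie in one $G$-orbit, walks around the odd cycle in steps of two to find an edge with both ends in one orbit, and finishes with even-length paths), whereas you invoke the standard dichotomy that a connected $G$-edge-transitive graph without isolated vertices is $G$-vertex-transitive or bipartite with the two orbits as parts, but the content is the same.
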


\begin{proof}
 Observe that any pair of vertices with a common neighbour lie in the same $G$-orbit. Then
 connectivity implies that $\Gamma$ is $G$-edge-transitive. If $\Gamma$ is not bipartite, then $\Gamma$
 contains an odd length cycle, so there exists an edge whose ends are in the same $G$-orbit. There
 is a path of even length from every vertex in $\Gamma$ to one of these ends, so $\Gamma$ is
 $G$-vertex transitive.
\end{proof}

If a graph $\Gamma$ is complete or has girth at least 4, then the induced neighbourhood graphs are
complete or have no edges, in which case the suborbits of $G_u^{\Gamma(u)}$ will depend entirely on
the automorphism group $G$. However, when $\Gamma$ is a locally rank 3 non-complete graph with girth
3, these suborbits can be described combinatorially.

\begin{lemma}
\label{orbits} Let $\Gamma$ be a connected non-complete graph with girth $3$ that is locally rank
$3$ with respect to $G$. Then for every $u\in V\Gamma$ and $v\in \Gamma(u)$, the orbits of $G_{u,v}$
on $\Gamma(u)$ are $\{v\}$, $\Gamma(u)\cap\Gamma(v)$ and $\Gamma(u)\cap\Gamma_2(v)$.
\end{lemma}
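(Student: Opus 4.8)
The plan is to fix $u\in V\Gamma$ and $v\in \Gamma(u)$ and argue that the three sets $\{v\}$, $\Gamma(u)\cap\Gamma(v)$ and $\Gamma(u)\cap\Gamma_2(v)$ are precisely the orbits of $G_{u,v}$ on $\Gamma(u)$. Since $\Gamma$ is locally rank $3$ with respect to $G$, the group $G_u^{\Gamma(u)}$ is transitive of rank $3$ on $\Gamma(u)$, so for the fixed neighbour $v$ the stabiliser $(G_u)_v^{\Gamma(u)} = G_{u,v}^{\Gamma(u)}$ has exactly three orbits on $\Gamma(u)$, one of which is the singleton $\{v\}$. Thus it suffices to show that the other two orbits are exactly $\Gamma(u)\cap\Gamma(v)$ and $\Gamma(u)\cap\Gamma_2(v)$. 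First I would note that any $w\in \Gamma(u)$ with $w\neq v$ satisfies $w\in\Gamma(v)$ or $w\notin\Gamma(v)$, i.e. $d_\Gamma(v,w)=1$ or $d_\Gamma(v,w)\geq 2$; but $v,w\in\Gamma(u)$ forces $d_\Gamma(v,w)\leq 2$, so in the second case $d_\Gamma(v,w)=2$. Hence $\Gamma(u)\setminus\{v\}$ is the disjoint union of $\Gamma(u)\cap\Gamma(v)$ and $\Gamma(u)\cap\Gamma_2(v)$, and since $G_{u,v}$ preserves both adjacency and distance, each of these two sets is a union of $G_{u,v}$-orbits on $\Gamma(u)$.

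Next I would check that neither of these two sets is empty, which together with the count of exactly two non-trivial orbits will force each to be a single orbit. The set $\Gamma(u)\cap\Gamma(v)$ is non-empty because $\Gamma$ has girth $3$ and $G_u$ is transitive on $\Gamma(u)$: some edge of $\Gamma$ lies in a triangle, hence by edge-transitivity (Lemma \ref{ver tran}) and vertex-transitivity on the relevant orbit — or more directly, since the induced graph $[\Gamma(u)]$ has an edge for some $u$ and by the locally rank $3$ hypothesis this holds for all $u$ — the edge $\{u,v\}$ lies in a triangle, giving a common neighbour. The set $\Gamma(u)\cap\Gamma_2(v)$ is non-empty because $\Gamma$ is non-complete and connected: if every vertex of $\Gamma(u)$ were adjacent to $v$ (and to $u$), one argues using connectivity and Lemma \ref{ver tran} that $\Gamma$ would be complete, contradicting the hypothesis. (Concretely, $\Gamma(u)\setminus(\{v\}\cup\Gamma(v))$ being empty for all such pairs would make $\Gamma(u)=\{v\}\cup(\Gamma(u)\cap\Gamma(v))$ a clique together with $u$, and propagating this via edge-transitivity forces $\Gamma$ complete.)

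With both non-trivial sets non-empty and each a union of $G_{u,v}$-orbits, and with exactly three $G_{u,v}$-orbits in total on $\Gamma(u)$ one of which is $\{v\}$, the only possibility is that $\Gamma(u)\cap\Gamma(v)$ and $\Gamma(u)\cap\Gamma_2(v)$ are each a single $G_{u,v}$-orbit. This gives the desired conclusion.

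I expect the main obstacle to be the non-emptiness of $\Gamma(u)\cap\Gamma_2(v)$, i.e. ruling out the degenerate case cleanly: one must use that $\Gamma$ is non-complete \emph{and} connected and leverage Lemma \ref{ver tran} (or a short direct propagation argument along edges) to derive a contradiction from $\Gamma(u)\subseteq\{v\}\cup\Gamma(v)$ holding for all edges $\{u,v\}$; the girth $3$ hypothesis is exactly what is needed on the other side to ensure $\Gamma(u)\cap\Gamma(v)\neq\emptyset$, so the two hypotheses "non-complete" and "girth $3$" are used symmetrically to pin down the two non-trivial suborbits. Everything else is bookkeeping about distances within $\Gamma(u)$.
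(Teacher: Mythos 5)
Your overall strategy is the same as the paper's: the three sets are $G_{u,v}$-invariant and partition $\Gamma(u)$, so by the rank $3$ hypothesis it suffices to show that $\Gamma(u)\cap\Gamma(v)$ and $\Gamma(u)\cap\Gamma_2(v)$ are non-empty, the first being forced by girth $3$ and the second by non-completeness. Your treatment of $\Gamma(u)\cap\Gamma(v)$ via edge-transitivity (some edge lies in a triangle, and Lemma \ref{ver tran} moves that edge to $\{u,v\}$) is correct and slightly more direct than the paper, which instead uses vertex-transitivity (girth $3$ forces $\Gamma$ non-bipartite, so Lemma \ref{ver tran} gives $G$-vertex-transitivity) to argue that if $[\Gamma(u)]$ had no edges then no neighbourhood graph would, contradicting girth $3$.

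The step you should tighten is the non-emptiness of $\Gamma(u)\cap\Gamma_2(v)$. For the single pair $(u,v)$, the assumption $\Gamma(u)\cap\Gamma_2(v)=\varnothing$ says only that $\Gamma(u)\setminus\{v\}\subseteq\Gamma(v)$; it does not make $\{u\}\cup\Gamma(u)$ a clique, so your ``concretely'' parenthetical needs the same emptiness for every $v'\in\Gamma(u)$ with this fixed $u$, and edge-transitivity does not deliver that: it transfers unordered edges, while the condition is asymmetric in $u$ and $v$ (a transferring automorphism may swap the two ends, giving the inclusion the wrong way round). The correct, one-line transfer uses the transitivity of $G_u$ on $\Gamma(u)$, which is part of the locally rank $3$ hypothesis: for $v'\in\Gamma(u)$ choose $g\in G_u$ with $v^g=v'$; then $\Gamma(u)\cap\Gamma_2(v')=\bigl(\Gamma(u)\cap\Gamma_2(v)\bigr)^g=\varnothing$, so $[\Gamma(u)]$ is complete. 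Then vertex-transitivity (again from Lemma \ref{ver tran}, since girth $3$ rules out bipartiteness) makes every neighbourhood graph complete, and connectivity forces $\Gamma$ to be complete, since two vertices at distance $2$ would lie in a common complete neighbourhood; this contradicts non-completeness. That is exactly how the paper closes the argument, and with this repair your proof coincides with it.
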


\begin{proof}
Fix $u\in V\Gamma$ and $v\in \Gamma(u)$. Clearly the sets $\{v\}$, $\Gamma(u)\cap\Gamma(v)$ and
$\Gamma(u)\cap\Gamma_2(v)$ are $G_{u,v}$-invariant and partition $\Gamma(u)$, so it suffices to show
that the latter two are non-empty. Observe that since $\Gamma$ has girth 3, it is not bipartite, and
so it is $G$-vertex-transitive by Lemma \ref{ver tran}. In particular, the induced neighbourhood
graphs of $\Gamma$ are all isomorphic. If $\Gamma(u)\cap\Gamma(v)=\varnothing$, then $[\Gamma(u)]$
has no edges, and so $\Gamma$ has no triangles, a contradiction. Similarly, if
$\Gamma(u)\cap\Gamma_2(v)= \varnothing$, then $[\Gamma(u)]$ is complete, and so $\Gamma$ is
complete, a contradiction.
\end{proof}

Using this result, we see that it is usually sufficient to consider the full automorphism group in
order to classify a locally rank 3 graph with girth 3.

\begin{lemma}
\label{full} Let $\Gamma$ be a connected non-complete graph with girth $3$ that is locally rank $3$
with respect to $G$. If $G\leq H\leq \Aut(\Gamma)$, then $\Gamma$ is locally rank $3$ with respect
to $H$.
\end{lemma}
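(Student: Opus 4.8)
The plan is to fix an arbitrary $u\in V\Gamma$ and show directly that $H_u^{\Gamma(u)}$ is transitive of rank exactly $3$; once this is done for every $u$, and since the property ``no vertex has valency $0$'' is clearly unaffected by passing from $G$ to the larger group $H$, the conclusion follows by the definition of locally rank $3$. Transitivity is the cheap half: because $G\leq H$ we have $G_u\leq H_u$, and $G_u^{\Gamma(u)}$ is already transitive on $\Gamma(u)$ by hypothesis, so its overgroup $H_u^{\Gamma(u)}$ is transitive as well.

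For the rank, I would first record the elementary inequality in one direction: the rank equals the number of orbits on $\Gamma(u)\times\Gamma(u)$, and enlarging the acting group can only coarsen this orbit partition, so $H_u^{\Gamma(u)}$ has rank at most $3$. The substance is the reverse inequality, and here is where Lemma \ref{orbits} does the work. Fix $v\in\Gamma(u)$. The three sets $\{v\}$, $\Gamma(u)\cap\Gamma(v)$ and $\Gamma(u)\cap\Gamma_2(v)$ partition $\Gamma(u)$, and each is defined purely in terms of adjacency and distance in $\Gamma$, hence is invariant under every element of $H_{u,v}$. Moreover all three are non-empty: this is exactly what is shown inside the proof of Lemma \ref{orbits}, using that $\Gamma$ is non-complete with girth $3$ and therefore (by Lemma \ref{ver tran}) $G$-vertex-transitive, so the induced neighbourhood graphs are mutually isomorphic, each has an edge, and none is complete. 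Consequently $H_{u,v}$ has at least three orbits on $\Gamma(u)$, and since $H_{u,v}$ and the point stabiliser $(H_u^{\Gamma(u)})_v$ induce the same orbits on $\Gamma(u)$, the rank of $H_u^{\Gamma(u)}$ is at least $3$. Combining the two inequalities gives rank exactly $3$.

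There is no real obstacle here; the lemma is essentially a formal consequence of Lemma \ref{orbits} plus the remark that the three suborbit-defining sets are canonical in the sense of being preserved by any automorphism fixing $u$ and $v$. The only thing to be careful about is the routine bookkeeping between $H_{u,v}$ and its induced image $(H_u^{\Gamma(u)})_v$, and between counting point-stabiliser orbits versus counting orbits on $\Gamma(u)\times\Gamma(u)$; these are standard equivalences and will not require any computation.
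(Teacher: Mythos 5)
Your proposal is correct and follows essentially the same route as the paper: both rest on Lemma \ref{orbits} together with the observation that the three sets $\{v\}$, $\Gamma(u)\cap\Gamma(v)$ and $\Gamma(u)\cap\Gamma_2(v)$ are invariant under $H_{u,v}$, whence they are exactly the $H_{u,v}$-orbits. The paper states this in one step where you split it into the two rank inequalities, but the content is the same.
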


\begin{proof}
Let $u\in V\Gamma$ and $v\in\Gamma(u)$. By Lemma \ref{orbits}, the orbits of $G_{u,v}$ on
$\Gamma(u)$ are $\{v\}$, $\Gamma(u)\cap\Gamma(v)$ and $\Gamma(u)\cap\Gamma_2(v)$. These are
$H_{u,v}$-invariant sets, so they are the orbits of $H_{u,v}$ on $\Gamma(u)$.
\end{proof}

Note that Lemma \ref{full} does not hold in general. For example, the
complete graph $K_4$ is locally rank 3 with respect to $A_4$ but not $\Aut(K_4)=S_4$. Moreover, let
$n:=\tbinom{m}{2}$ where $m\geq 4$, and view $S_m$ as a subgroup of $S_n$ via the action of $S_m$ on
$\tbinom{m}{2}$. Then $Q_n$ has girth 4 and is locally rank 3 with respect to $\mathbb{F}_2^n\rtimes
S_m$ but not $\Aut(Q_n)=\mathbb{F}_2^n\rtimes S_n$. Thus $K_4$ and $Q_n$ (for $n=\tbinom{m}{2}$ and
$m\geq 4$) are examples of graphs that are locally rank 3 and locally $2$-arc transitive.

Recall that a $2$-arc $(u,v,w)$ is either a \textit{triangle} when $u$ and $w$ are adjacent, or a
\textit{$2$-geodesic} when $u$ and $w$ are distance 2 apart. For any $u\in V\Gamma$, the (possibly
empty) sets of triangles and $2$-geodesics with initial vertex $u$ are $G_u$-invariant. It turns out
that for a connected non-complete graph with girth 3, these two sets are $G_u$-orbits precisely when
$\Gamma$ is locally rank 3 with respect to $G$. This result, which we now prove, is somewhat
surprising, for the assumption that these two sets form orbits does not, at first glance, seem
likely to imply that $G_u$ is transitive on $\Gamma(u)$ for all $u\in V\Gamma$.

\begin{prop}
 \label{2 orbits} Let $\Gamma$ be a connected non-complete graph with girth $3$, and let $G\leq
 \Aut(\Gamma)$. Then the following are equivalent.
 \begin{itemize}
 \item[(i)] $\Gamma$ is locally rank $3$ with respect to $G$. 
 \item[(ii)] For each $u\in V\Gamma$, there are two orbits of $G_u$ on the $2$-arcs starting at $u$, namely the set of  triangles starting at $u$ and the set of $2$-geodesics starting at $u$.
 \end{itemize}
 \end{prop}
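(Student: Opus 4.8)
The plan is to prove the two implications separately. The direction (i) $\Rightarrow$ (ii) is the easy one and follows almost immediately from Lemma \ref{orbits}: if $\Gamma$ is locally rank $3$ with respect to $G$, then for any $u\in V\Gamma$ and $v\in\Gamma(u)$, the $G_{u,v}$-orbits on $\Gamma(u)$ are exactly $\{v\}$, $\Gamma(u)\cap\Gamma(v)$ and $\Gamma(u)\cap\Gamma_2(v)$. A $2$-arc $(v,u,w)$ starting at $u$ is a triangle when $w\in\Gamma(u)\cap\Gamma(v)$ and a $2$-geodesic when $w\in\Gamma(u)\cap\Gamma_2(v)$; since $G_u$ is transitive on $\Gamma(u)$ (by the rank $3$ hypothesis), combining the transitivity on the choice of $v$ with the description of the $G_{u,v}$-orbits on the choices of $w$ shows that $G_u$ has exactly the two claimed orbits on $2$-arcs at $u$. (That both orbits are non-empty uses that $\Gamma$ has girth $3$ and is non-complete, exactly as in Lemma \ref{orbits}.)

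The substantive direction is (ii) $\Rightarrow$ (i). Here I assume that for every $u$ the $2$-arcs at $u$ split into the triangle-orbit and the $2$-geodesic-orbit under $G_u$, and I must deduce that $G_u$ is transitive on $\Gamma(u)$ (transitivity of rank $3$ then follows: the point stabiliser $G_{u,v}$ fixes $v$, and acts on $\Gamma(u)\setminus\{v\}$ with the triangle-neighbours and the $2$-geodesic-neighbours as its only two orbits, because these are precisely the "third vertices" of triangles, resp. $2$-geodesics, through the edge $\{u,v\}$, and $G_{u,v}\leq G_u$ already has only those two orbit-types on such pairs — so $G_u$ on $\Gamma(u)$ has rank at most $3$, and exactly $3$ since girth $3$ and non-completeness force both neighbour-types to occur). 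So the real content is: \emph{the two-orbits-on-$2$-arcs condition at every vertex forces vertex-local transitivity}.

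For that I would argue by connectivity. Fix $u$ and let $v_1,v_2\in\Gamma(u)$; I want $g\in G_u$ with $v_1^g=v_2$. Since $\Gamma$ is connected and not bipartite (girth $3$), it contains triangles, hence there is at least one $2$-arc at $u$ that is a triangle and at least one that is a $2$-geodesic. The key local move is this: if $v,v'\in\Gamma(u)$ are \emph{adjacent} (so $(v,u,v')$ is a triangle), then because the triangles at $u$ form a single $G_u$-orbit, and $(v,u,v')$, $(v',u,v)$ are both triangles, there is $g\in G_u$ sending the ordered triangle $(v,u,v')$ to some ordered triangle — and by acting repeatedly one can move $v$ to $v'$; similarly, if $v,v'\in\Gamma(u)$ are at distance $2$ in $\Gamma$, then $(v,u,v')$ is a $2$-geodesic and the single-orbit hypothesis on $2$-geodesics lets us move $v$ to $v'$. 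Thus $v_1$ and $v_2$ lie in the same $G_u$-orbit whenever they are joined by a path in $\Gamma(u)$ using edges of $[\Gamma(u)]$ \emph{or} "non-edges" (distance-$2$ pairs) of $[\Gamma(u)]$ — i.e. whenever they are joined by any path in the complete graph on $\Gamma(u)$, which is trivially always. More carefully: the relation "$v\sim v'$ iff $v'\in v^{G_u}$" is a $G_u$-invariant equivalence relation on $\Gamma(u)$ refining neither the adjacency nor the non-adjacency relation away, and the above shows each adjacent pair and each distance-$2$ pair is in one class; since $[\Gamma(u)]$ is connected-or-its-complement-is, and in fact any two vertices of $\Gamma(u)$ are joined by a short alternating chain, all of $\Gamma(u)$ is one $G_u$-orbit.

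The main obstacle I anticipate is making the last step airtight: that every pair $v_1,v_2\in\Gamma(u)$ is linked by a chain $v_1=w_0,w_1,\dots,w_k=v_2$ in which consecutive $w_i,w_{i+1}$ are \emph{either} adjacent in $\Gamma$ \emph{or} at distance $2$ in $\Gamma$, with \emph{both} types actually available to relate points — one must rule out the degenerate possibilities that $[\Gamma(u)]$ is empty (no triangles — excluded since girth $3$) or complete (no $2$-geodesics at $u$ — excluded since $\Gamma$ is non-complete and vertex-transitive by Lemma \ref{ver tran}, applied after first noting (ii) forces $G$-edge-transitivity and non-bipartiteness, hence $G$-vertex-transitivity, so all $[\Gamma(u)]$ are isomorphic and none is complete). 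Given that both relation-types occur and both give orbit-merges, and given $G_u$-invariance, a clean way to finish is: the set of $G_u$-orbits on $\Gamma(u)$ would, if more than one, give a non-trivial $G_u$-invariant partition; but any two parts contain a pair that is adjacent or a pair at distance $2$ (since $\Gamma(u)$ is finite and every pair is one or the other), and each such pair was shown to lie in a common orbit — contradiction. I would present the argument in this orbit-partition form to avoid fiddly path-chasing.
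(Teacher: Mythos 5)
Your direction (i) $\Rightarrow$ (ii) is essentially fine (modulo the harmless slip that a $2$-arc starting at $u$ is $(u,v,w)$, not $(v,u,w)$; one applies Lemma \ref{orbits} at $u$ and, with the roles of the two vertices swapped, at the second vertex of the arc). The gap is in (ii) $\Rightarrow$ (i), at exactly the step you call the ``key local move''. For an adjacent pair $v,v'\in\Gamma(u)$ the move is correct once rewritten properly: $(u,v,v')$ and $(u,v',v)$ are both triangles \emph{starting at $u$}, so hypothesis (ii) at $u$ gives $g\in G_u$ with $v^g=v'$. But for a pair $v,v'\in\Gamma(u)$ at distance $2$, the natural $2$-arc $(v,u,v')$ starts at $v$, not at $u$. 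Hypothesis (ii) applied at $v$ only says that all $2$-geodesics starting at $v$ form one $G_v$-orbit, and elements of $G_v$ need not fix $u$; and no vertex's hypothesis relates $(v,u,v')$ to $(v',u,v)$, since these two $2$-arcs have different initial vertices. So nothing in (ii) hands you an element of $G_u$ carrying $v$ to $v'$, and the orbit-merging argument fails precisely for such pairs. (The statement is true a posteriori, but only as a consequence of the proposition you are trying to prove.)

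This matters because the adjacent-pair move alone only shows that the non-isolated vertices of $[\Gamma(u)]$ lie in a single $G_u$-orbit: each such vertex is the middle vertex of a triangle starting at $u$, and those triangles form one $G_u$-orbit. The genuinely hard case, which your proposal does not address, is an isolated vertex $x$ of $[\Gamma(u)]$, i.e.\ one with $\Gamma(u)\cap\Gamma(x)=\varnothing$ --- exactly a vertex at distance $2$ from everything else in $\Gamma(u)$. The paper rules this out by a dedicated contradiction argument: fix a triangle $(u,v,w)$; since $(w,u,v)$ and $(w,v,u)$ are triangles starting at the \emph{third} vertex $w$, hypothesis (ii) at $w$ yields $g\in G_w$ interchanging $u$ and $v$; then $x^g\in\Gamma(v)\cap\Gamma_2(u)$, so $(u,v,x^g)$ and $(u,x,y)$ are $2$-geodesics starting at $u$ (where $y\in\Gamma(x)\cap\Gamma_2(u)$ exists because (ii) at $x$ forces $x$ to lie in a triangle, hence to have a neighbour other than $u$), and (ii) at $u$ gives $h\in G_u$ with $v^h=x$, whence $w^h\in\Gamma(u)\cap\Gamma(x)$, a contradiction. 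Some idea of this kind --- invoking (ii) at an auxiliary vertex to manufacture two $2$-geodesics with initial vertex $u$ --- is needed to finish, and your proposal as written does not supply it.
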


\begin{proof}
Suppose that for each $u\in V\Gamma$, there are two orbits of $G_u$ on the $2$-arcs starting at $u$,
namely the sets of triangles and $2$-geodesics. Let $u\in V\Gamma$. First we claim that $G_u$ acts
transitively on $\Gamma(u)$. Since $u$ lies in a triangle, there exists $v\in \Gamma(u)$ and $w\in
\Gamma(u)\cap\Gamma(v)$. Let $x\in \Gamma(u)$, and suppose for a contradiction that
$\Gamma(u)\cap\Gamma(x)=\varnothing$. Since $x$ lies in a triangle and therefore does not have
valency 1, there exists $y\in \Gamma_2(u)\cap\Gamma(x)$. Moreover, since $(w,u,v)$ and $(w,v,u)$
are triangles, there is some $g \in G_w$ that interchanges $u$ and $v$. 
In particular, since $v$ and $x$ are not adjacent, $x^g$ lies in $\Gamma_2(u)\cap \Gamma(v)$. 
Then $(u,v,x^g)$ and $(u,x,y)$ are $2$-geodesics, so $v^h=x$ for some $h\in G_u$, in which case $w^h\in \Gamma(u)\cap\Gamma(x)$, a
contradiction. Thus there exists $y\in \Gamma(u)\cap\Gamma(x)$, and since $(u,v,w)$ and $(u,x,y)$
are triangles, we have $x\in v^{G_u}$, as desired.

Now we determine the orbits of $G_{u,v}$ on $\Gamma(u)$, where $v\in \Gamma(u)$. If $x,y\in
\Gamma(u)\cap\Gamma(v)$, then $(v,u,x)$ and $(v,u,y)$ are triangles, so there exists $g\in G_{u,v}$
such that $x^g=y$. Similarly, if $x,y\in \Gamma(u)\cap\Gamma_2(v)$, then $(v,u,x)$ and $(v,u,y)$ are
$2$-geodesics, so there exists $g\in G_{u,v}$ such that $x^g=y$. Thus the orbits of $G_{u,v}$ on
$\Gamma(u)$ are $\{v\}$, $\Gamma(u)\cap\Gamma(v)$ and $\Gamma(u)\cap\Gamma_2(v)$, and so $\Gamma$ is
locally rank 3 with respect to $G$.

Conversely, suppose that $\Gamma$ is locally rank 3 with respect to $G$. Let $u\in V\Gamma$. Recall
that the sets of triangles and $2$-geodesics starting at $u$ are $G_u$-invariant. They are also
non-empty, for there exists $v\in \Gamma(u)$ (since $\Gamma$ has no vertices of valency 0 by assumption), and so there exists $w\in \Gamma(v)\cap \Gamma(u)$ and
$x\in \Gamma(v)\cap\Gamma_2(u)$ by Lemma \ref{orbits}, in which case $(u,v,w)$ is a triangle and
$(u,v,x)$ is a $2$-geodesic.

Now let $(u,v,w)$ and $(u,x,y)$ be 2-arcs. Since $G_u$ is transitive on $\Gamma(u)$, there exists
$g\in G_u$ such that $v^g=x$. If $(u,v,w)$ and $(u,x,y)$ are triangles, then $(u,x,w^g)$ is a
triangle, so $w^g,y\in \Gamma(x)\cap \Gamma(u)$, and by Lemma \ref{orbits} there exists $h\in
G_{x,u}$ such that $w^{gh}=y$. Thus $(u,v,w)^{gh}=(u,x,y)$, so the set of triangles starting at $u$ is
a $G_u$-orbit. Similarly, if $(u,v,w)$ and $(u,x,y)$ are $2$-geodesics, then $(u,x,w^g)$ is a $2$-geodesic, so
$w^g,y\in \Gamma(x)\cap\Gamma_2(u)$, and by Lemma \ref{orbits} there exists $h\in G_{x,u}$ such that
$w^{gh}=y$. Thus $(u,v,w)^{gh}=(u,x,y)$, so the set of $2$-geodesics starting at $u$ is a
$G_u$-orbit.
\end{proof}

Next we provide some  results  that will be used in the proof of Theorem \ref{main}. The first gives us information about the automorphism groups of
halved coset graphs of binary linear codes. Recall that if $C$ is a binary linear $[n,r,d]$-code,
then $\Aut(C)$ is the subgroup of $S_n$ that preserves~$C$.

\begin{lemma}
\label{rank 3 code} Let $C$ be an even binary linear $[n,r,d]$-code where $n\geq 5$ and $d\geq 7$.
Let $\Gamma$ be a halved graph of $\Gamma(C)$. Then $\Aut(\Gamma)=(E_n/C)\rtimes \Aut(C)$. Moreover,
let $\rho:\Aut(\Gamma)\to \Aut(C)$ be the natural projection map, and let $G\leq \Aut(\Gamma)$. Then
the following hold.
\begin{itemize}
 \item[(i)] The actions of $G_{x+C}$ on $\Gamma(x+C)$ and $(G_{x+C})\rho$ on $\tbinom{n}{2}$ are
   permutation isomorphic for all $x\in E_n$.
\item[(ii)] If $G=(E_n/C)\rtimes H $ where $H\leq \Aut(C)$ is transitive of rank $3$ on
  $\tbinom{n}{2}$, then $\Gamma$ is locally rank $3$ with respect to $G$.
\item[(iii)] If $H:=(G_{0+C})\rho=(G_{x+C})\rho$ for all $x\in E_n$ and $H$ is transitive on
  $\tbinom{n}{2}$, then $G=(E_n/C)\rtimes H$.
\end{itemize}
\end{lemma}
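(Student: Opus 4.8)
The plan is to prove the three claims in sequence, using the machinery of coverings developed in Section~\ref{rect} transported to the halved graph via Lemma~\ref{autgamma}. First I would establish the main assertion $\Aut(\Gamma)=(E_n/C)\rtimes \Aut(C)$: by Lemma~\ref{code}, $\Pi:=\Gamma(C)$ is a bipartite rectagraph with $c_3(\Pi)=3$, and by Lemma~\ref{code tri} its halved graph $\Gamma$ is locally $T_n$ (note $n\geq 5$ forces $\binom{n}{2}\geq 10$, so $\Gamma$ is genuinely locally triangular). By Lemma~\ref{autgamma}(i), $\Aut(\Gamma)\simeq \Aut(\Pi)_{V\Gamma}$, and by Lemma~\ref{code aut}, $\Aut(\Pi)=(\mathbb{F}_2^n/C)\rtimes \Aut(C)$ with $\mathbb{F}_2^n/C$ acting by translation. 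Since $C\leq E_n$, the stabiliser in $\mathbb{F}_2^n/C$ of the part $V\Gamma=E_n/C$ is exactly $E_n/C$, while $\Aut(C)\leq S_n$ preserves weight parity hence fixes $V\Gamma$ setwise; so $\Aut(\Pi)_{V\Gamma}=(E_n/C)\rtimes \Aut(C)$, giving the displayed equality. The projection $\rho$ onto $\Aut(C)$ is then just restriction to the $S_n$-factor.

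For part~(i), fix $x\in E_n$ and apply Lemma~\ref{pi} to get a covering $\pi:Q_n\to\Pi$ with $0\pi=x+C$; then Lemma~\ref{autgamma}(ii) identifies the $\Aut(\Gamma)_{x+C}$-action on $\Gamma(x+C)$ with the $\Aut(\Pi)_{x+C}$-action on $\Pi_2(x+C)$, and Lemma~\ref{stab} identifies the latter with the $N^\pi_0$-action on $\binom{n}{2}$. It remains to check that under these identifications $N^\pi_0$ corresponds to $(\Aut(\Gamma)_{x+C})\rho$ and that the resulting permutation action on $\binom{n}{2}$ is the natural one of a subgroup of $S_n$; this follows because, by Lemma~\ref{K cover} and Proposition~\ref{K}, $N^\pi_0$ is precisely the point stabiliser in $N^\pi/K^\pi=\Aut(\Pi)$, viewed inside $S_n$ via $\rho$, and $\Aut(C)=\Aut(C)_{0\text{-coset}}$ already acts on $\binom{n}{2}$ in the natural way. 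Restricting to any subgroup $G$ then gives that $G_{x+C}$ on $\Gamma(x+C)$ is permutation isomorphic to $(G_{x+C})\rho$ on $\binom{n}{2}$, since the kernel of $\rho$ is the translation subgroup $E_n/C$ which acts trivially on $\Gamma(x+C)$ (a translation fixing the coset $x+C$ already fixes every neighbouring coset).

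Part~(ii) is then immediate: if $G=(E_n/C)\rtimes H$ with $H\leq\Aut(C)$ transitive of rank~3 on $\binom{n}{2}$, then for every $x\in E_n$ we have $G_{x+C}\geq H^{t}$ for the appropriate conjugate, and in any case $(G_{x+C})\rho\geq H$ (translations act trivially on $\Gamma(x+C)$, so $G_{x+C}$ surjects onto $H$ under $\rho$); hence by part~(i) the action of $G_{x+C}$ on $\Gamma(x+C)$ is permutation isomorphic to that of a group containing $H$ acting on $\binom{n}{2}$, which is transitive of rank~3 (a supergroup of a rank-3 group inside $S_n$ acting on the same set is still transitive of rank~3, or more simply here $(G_{x+C})\rho=H$ since $\Aut(C)\cap G=H$). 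Since $\Gamma$ is connected and has girth~3 (it is locally $T_n$ with $n\geq 5$, so neighbourhoods contain edges and $\Gamma$ is non-complete), $\Gamma$ is locally rank~3 with respect to $G$.

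For part~(iii), suppose $H:=(G_{0+C})\rho=(G_{x+C})\rho$ for all $x\in E_n$ and $H$ is transitive on $\binom{n}{2}$. The plan is to show $G$ contains the full translation subgroup $E_n/C$ and then that $G$ splits as claimed. Transitivity of $H$ on $\binom{n}{2}$, combined with part~(i) (or Lemma~\ref{orbits}), gives that $G_{x+C}$ is transitive on $\Gamma(x+C)$ for all $x$, so $G$ is vertex-transitive on the connected graph $\Gamma$; then $|G|=|V\Gamma|\cdot|G_{0+C}|=|E_n/C|\cdot|G_{0+C}|$. Writing $K:=\ker(\rho|_G)=G\cap(E_n/C)$, vertex-transitivity forces $E_n/C=\{$translations realised by $G\}$: indeed for any $y\in E_n$ some $g\in G$ sends $0+C$ to $y+C$, and adjusting by an element of $G_{0+C}$ whose $\rho$-image is trivial is not needed — rather, $K$ acts on $V\Gamma$ and its orbit containing $0+C$ has size $|K|$, while a counting argument using $|G|=|V\Gamma||G_{0+C}|$ and $|G_{0+C}|=|K|\cdot|H|$ (since $\rho$ restricted to $G_{0+C}$ has kernel $K$) shows $|K|=|E_n/C|$, so $K=E_n/C$. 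Finally $G/K\cong H$ and the subgroup $H\leq\Aut(C)$ itself lies in $\Aut(\Gamma)$ fixing $0+C$, so $G=KH=(E_n/C)\rtimes H$. The main obstacle I anticipate is the bookkeeping in part~(iii) — correctly pinning down that the hypothesis "$(G_{x+C})\rho=H$ for all $x$'' together with transitivity of $H$ actually forces $G$ to contain all translations in $E_n/C$, rather than merely a subgroup; the cleanest route is the orbit-counting argument above, using that $K=\ker\rho|_G$ acts semiregularly on $V\Gamma$ (translations have no fixed cosets unless trivial) so $|K|\leq |V\Gamma|$, with equality forced by the order computation.
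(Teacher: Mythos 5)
Your handling of the displayed equality $\Aut(\Gamma)=(E_n/C)\rtimes\Aut(C)$ and of parts (i) and (ii) follows the paper's route (the paper obtains (i) more directly, from $G_{x+C}=G\cap\{(x+x^{\sigma}+C,\sigma^{-1}):\sigma\in\Aut(C)\}$ and the bijection $x+e_{i,j}+C\mapsto\{i,j\}$, rather than via your detour through Lemmas \ref{pi}, \ref{stab} and \ref{autgamma}(ii)), though your recurring justification that ``translations act trivially on $\Gamma(x+C)$'' is wrong as stated: a nontrivial translation does not even stabilise $\Gamma(x+C)$ setwise; the relevant fact is that the only translation fixing the vertex $x+C$ is the identity, so $\rho|_{G_{x+C}}$ is injective and $G_{x+C}$ is conjugate to $G_{0+C}$ by a translation. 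These are repairable slips.

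Part (iii), however, contains a genuine gap. Your counting rests on the identity $|G_{0+C}|=|K|\cdot|H|$, justified by ``$\rho$ restricted to $G_{0+C}$ has kernel $K$''. That is false: $K=G\cap(E_n/C)$ consists of translations, and a nontrivial translation fixes no vertex, so $G_{0+C}\cap K=1$ and in fact $|G_{0+C}|=|H|$. The correct counting yields only $|K|\cdot|G\rho|=|G|=|V\Gamma|\cdot|G_{0+C}|=|E_n/C|\cdot|H|$, which gives $K=E_n/C$ only if one already knows $G\rho=H$ --- and that is essentially the statement to be proved; nothing in your argument excludes the possibility that $G\rho$ properly contains $H$ while $K$ is a proper subgroup of $E_n/C$. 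The hypothesis that $(G_{x+C})\rho=H$ for \emph{every} $x\in E_n$ must be used element-wise, and this is exactly what the paper does: for each $x\in E_n$ and $\sigma\in H$ it produces $(x+x^{\sigma}+C,\sigma^{-1})\in G_{x+C}$, and combining this with $(0+C,\sigma)\in G_{0+C}$ gives the pure translation $(x+x^{\sigma}+C,1)\in G$; then transitivity of $H$ on $\tbinom{n}{2}$ supplies, for pairwise distinct $i,j,k$, some $\sigma$ with $\{i,k\}^{\sigma}=\{k,j\}$, whence $(e_{i,j}+C,1)=(e_{i,k}+e_{i,k}^{\sigma}+C,1)\in G$, so $E_n/C\leq G$ and $G=(E_n/C)\rtimes H$. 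You correctly identified this pinning-down of the translations as the main obstacle, but the orbit-counting you propose in its place does not overcome it.
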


\begin{proof}
 By Lemma \ref{code}, we may assume that $V\Gamma=E_n/C$. By Lemma \ref{code tri} and its proof, the
 graph $\Gamma$ is locally $T_n$, and for each $x\in E_n$, there is a bijection
 $\varphi_x:\Gamma(x+C)\to \tbinom{n}{2}$ defined by $x+e_{i,j}+C\mapsto \{i,j\}$ for all $ i<j\leq
 n$. Recall from Lemma \ref{code} (v) that $\Gamma(C)$ is a rectagraph and recall from Lemma \ref{code aut} that $\Aut(\Gamma(C))=(\mathbb{F}_2^n/C)\rtimes \Aut(C)$. Then
 $\Aut(\Gamma)=(E_n/C)\rtimes \Aut(C)$ by Lemma \ref{autgamma}.

 Let $x\in E_n$. Since $G_{x+C}=G\cap \{(x+x^{\sigma}+C,\sigma^{-1}):\sigma\in \Aut(C)\}$, the
 actions of $G_{x+C}$ on $\Gamma(x+C)$ and $(G_{x+C})\rho$ on $\tbinom{n}{2}$ are permutation
 isomorphic under the group isomorphism $\rho|_{G_{x+C}}$ and the bijection $\varphi_x$. This proves
 (i).

 If $G=(E_n/C)\rtimes H $ where $H\leq \Aut(C)$ is transitive of rank 3 on $\tbinom{n}{2}$, then
 $\Gamma$ is $G$-vertex-transitive and $(G_{0+C})\rho=H$, so $\Gamma$ is locally rank 3 with respect
 to $G$ by (i). Thus (ii) holds.

Now suppose that $H:=(G_{0+C})\rho=(G_{x+C})\rho$ for all $x\in E_n$, and suppose that $H$ is transitive on
  $\tbinom{n}{2}$. Since
$G_{0+C}=\{(0+C,\sigma):\sigma\in H\}$, it follows that $(x+x^\sigma+C,1)\in G$ for all $x\in E_n$
and $\sigma\in H$. Let $i,j,k\in [n]$ be pairwise distinct. There exists $\sigma\in H$ such
that $\{i,k\}^\sigma=\{k,j\}$, so $(e_{i,j}+C,1)=(e_{i,k}+e_{i,k}^\sigma+C,1)\in G$. Hence $E_n/C\leq
G$, and so $G=(E_n/C)\rtimes H$, proving (iii).
\end{proof}

In the following, we invoke the classification of the finite simple groups to determine which
subgroups of $S_n$ are transitive of rank 3 on $\tbinom{n}{2}$.

\begin{thm}
 \label{rank 3} Let $H\leq S_n$ where $n\geq 5$. Then $H$ is transitive of rank $3$ on
 $\tbinom{n}{2}$ if and only if $(H,n)$ is one of $(S_n,n)$ or $(A_n,n)$ for $n\geq 5$,
 $(\PGaL_2(8),9)$, or $(M_{n},n)$ for $n=11$, $12$, $23$ or $24$.
\end{thm}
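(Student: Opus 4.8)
The plan is to show first that any $H\le S_n$ (with $n\ge 5$) that is transitive of rank $3$ on $\tbinom n2$ is $3$-transitive on $[n]$, and then to run through the classification of finite $3$-transitive permutation groups, checking each family against a single criterion.

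For the reduction, note that transitivity of $H$ on $\tbinom n2$ already forces transitivity on $[n]$ when $n\ge 3$, since pairs contained in one point-orbit and pairs meeting two distinct point-orbits lie in different $H$-orbits. The orbitals of $S_n$ on $\tbinom n2$ are ``equal'', ``meet in one point'' and ``disjoint'', all non-empty for $n\ge 4$, and since every $H$-orbital refines one of these and $H$ has only three orbitals, $H_{\{1,2\}}$ is transitive on ``meet in one point'' and on ``disjoint''. The first of these forces any $g\in H_{\{1,2\}}$ with $\{1,3\}^g=\{1,4\}$ to fix $1$ and hence also $2$, so $H_{1,2}$ is transitive on $[n]\setminus\{1,2\}$. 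To conclude $3$-transitivity it remains to exclude the possibility that $H_1$ fixes $2$; but then $H_1\le H_2$, so $H_1=H_2=H_{1,2}$ by order, and choosing $g\in H$ with $\{1,2\}^g=\{2,3\}$ and using $H_1=H_2$ one finds that $H_1$ fixes $3$, contradicting transitivity of $H_1$ on $[n]\setminus\{1,2\}$, a set of size at least $3$. So $H$ is $2$-transitive, hence $3$-transitive. Finally, for any $2$-transitive $H$ one has $[H_{\{1,2\}}:H_{1,2}]=2$ (compare the number of unordered and of ordered pairs of points), so $H_{\{1,2\}}$ contains an element interchanging $1$ and $2$; combined with the transitivity of $H_{1,2}$ on $[n]\setminus\{1,2\}$ this makes $H_{\{1,2\}}$ automatically transitive on the ``meet in one point'' orbital. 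Hence for a $3$-transitive $H$, \emph{$H$ is transitive of rank $3$ on $\tbinom n2$ if and only if $H_{\{1,2\}}$ is transitive on the set of $2$-subsets of $[n]\setminus\{1,2\}$.}

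By the classification of finite $3$-transitive groups (a consequence of the classification of finite simple groups), $H$ is now one of: $S_n$ or $A_n$ in the natural action; an affine group of degree $2^d$, necessarily contained in $\mathrm{AGL}_d(2)$; a group $H$ with $\PSL_2(q)\unlhd H\le\PGaL_2(q)$ of degree $q+1$ (for $q=9$ this family includes $M_{10}$, $\PSigmaL_2(9)$ and $\PGaL_2(9)$); or a sporadic case among $M_{11}$ on $11$ or $12$ points, $M_{12}$, $M_{22}$, $M_{22}.2$, $M_{23}$, $M_{24}$. I would dispatch these using the criterion above. For $S_n$ and $A_n$ with $n\ge 5$ the pointwise stabiliser $H_{1,2}$ contains $S_{n-2}$ (resp.\ $A_{n-2}$), which is transitive on the $2$-subsets of $[n]\setminus\{1,2\}$, so these are rank $3$. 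For the affine groups with $d\ge 3$, the disjoint pairs whose difference vector equals that of the base pair $\{0,w\}$ form a non-empty proper $\mathrm{AGL}_d(2)_{\{0,w\}}$-invariant subset of the ``disjoint'' orbital, so $\mathrm{AGL}_d(2)$ — and hence every subgroup, in particular $2^4{:}A_7$ — has rank at least $4$, while $d=2$ gives degree $4<5$. For $\PSL_2(q)\unlhd H\le\PGaL_2(q)$ of degree $q+1\ge 5$ (where $q=p^f$), the group $H_{1,2}$ lies in $(\PGaL_2(q))_{1,2}$, of order $(q-1)f$; transitivity on the $\tbinom{q-1}2$ disjoint pairs forces $\tbinom{q-1}2\mid(q-1)f$, that is $q-2\mid 2f$, so $q\le 2f+2$ and $q\in\{2,3,4,8\}$; here $q=2,3$ give degree less than $5$, $q=4$ gives $\PGaL_2(4)\cong S_5$ (already present), and $q=8$ forces $H=\PGaL_2(8)$, whose two-point stabiliser $7{:}3$ is indeed transitive on the $21$ disjoint pairs because its element of order $3$ permutes cyclically the three ``difference classes'' — so $\PGaL_2(8)$ is rank $3$ but $\PSL_2(8)$ is not. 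Finally, among the sporadic cases $M_{11}$ on $11$ points, $M_{12}$, $M_{23}$ and $M_{24}$ are at least $4$-transitive, so $H_{1,2}$ is $2$-transitive on the remaining points and hence transitive on their $2$-subsets, giving rank $3$; the rest ($M_{22}$, $M_{22}.2$, and $M_{11}$ on $12$ points) are ruled out by a short divisibility check comparing $|H_{\{1,2\}}|$ with the number of disjoint pairs, or by direct computation. Collecting the groups that survive gives exactly the list in the statement, and the same analysis yields the converse.

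The step I expect to be the crux is the reduction to $3$-transitivity, and in particular the sub-argument eliminating a fixed point of $H_1$; after that, the main remaining care is to pin down the $\PSL_2(q)$ family exactly — deriving the bound $q\le 2f+2$ and then checking that $\PGaL_2(8)$, and none of its close relatives, genuinely attains rank $3$ — with the other per-family verifications being routine arithmetic or combinatorics once the classification of $3$-transitive groups is invoked.
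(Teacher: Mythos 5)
Your argument is correct, but it takes a genuinely different route from the paper. The paper's proof is very short: it quotes Higman's result \cite[Lemma 5]{Hig1970}, which says that a subgroup of $S_n$ of rank $3$ on $\tbinom{n}{2}$ is $4$-transitive or is one of $(A_5,5)$, $(\PGaL_2(8),9)$, and then simply appends the CFSG list of $4$-transitive groups. You instead prove from scratch that rank $3$ on $\tbinom{n}{2}$ forces $3$-transitivity (your orbit argument on the three $S_n$-classes of pairs, the deduction that $H_{1,2}$ is transitive on $[n]\setminus\{1,2\}$, and the conjugation trick excluding $H_1=H_2$ are all sound), and then sweep through the CFSG classification of $3$-transitive groups using the criterion that, for a $3$-transitive $H$, rank $3$ is equivalent to $H_{\{1,2\}}$ being transitive on the pairs disjoint from $\{1,2\}$. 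Both routes rest on CFSG; the paper's is shorter because Higman's lemma absorbs the combinatorial work, while yours is self-contained modulo the list of $3$-transitive groups and makes the local analysis explicit (in particular it isolates exactly why $\PGaL_2(8)$ but not $\PSL_2(8)$ survives). One small slip: in the $\PSL_2(q)\unlhd H\le\PGaL_2(q)$ case your criterion is about the \emph{setwise} stabiliser $H_{\{1,2\}}$, whose order divides $2(q-1)f$, not the pointwise stabiliser of order dividing $(q-1)f$; so the divisibility condition should be $q-2\mid 4f$ rather than $q-2\mid 2f$. This is harmless: the prime powers satisfying $q-2\mid 4f$ are still only $q\in\{3,4,8\}$ (so $q\in\{4,8\}$ once the degree is at least $5$), and the rest of your analysis, including the sporadic divisibility checks (e.g.\ $45\nmid 120$ for $M_{11}$ on $12$ points, $190\nmid 1920$ and $190\nmid 3840$ for $M_{22}$ and $M_{22}.2$), goes through and recovers exactly the stated list.
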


\begin{proof}
If $H$ is transitive of rank $3$ on $\tbinom{n}{2}$, then by \cite[Lemma 5]{Hig1970}, either $H$ is
$4$-transitive or $(H,n)$ is one of $(A_5,5)$ or $(\PGaL_2(8),9)$. By the classification of the
finite simple groups, the $4$-transitive subgroups of $S_n$ are $S_n$ for $n\geq 5$, $A_n$ for $n\geq 6$,  and  $M_{n}$ for $n=11$, $12$, $23$ or $24$ (cf. \cite[Theorem 4.11]{Cam1999}). Conversely, all of these groups are transitive of rank 3 on $\tbinom{n}{2}$.
\end{proof}

Surprisingly, using Theorem \ref{rank 3}, we can prove that the converse to Lemma \ref{rank 3 code}
(ii) holds.

\begin{prop}
\label{G} Let $C$ be an even binary linear $[n,r,d]$-code where $n\geq 5$ and $d\geq 7$. A halved
graph of $\Gamma(C)$ is locally rank $3$ with respect to $G$ if and only if $G=(E_n/C)\rtimes H$
where $H\leq \Aut(C)$ is transitive of rank $3$ on $\tbinom{n}{2}$.
\end{prop}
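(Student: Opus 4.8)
The ``if'' direction is precisely Lemma~\ref{rank 3 code}(ii). For the converse, the plan is to first use Theorem~\ref{main rect} to reduce $C$ to four explicit codes, and then, case by case, to identify $G$ via Lemma~\ref{rank 3 code}(iii). So suppose a halved graph $\Gamma$ of $\Gamma(C)$ is locally rank~$3$ with respect to $G$; by Lemma~\ref{rank 3 code} we may take $V\Gamma=E_n/C$, and we write $\rho\colon\Aut(\Gamma)=(E_n/C)\rtimes\Aut(C)\to\Aut(C)$ for the natural projection. By Lemma~\ref{rank 3 code}(i), the group $(G_{x+C})\rho$ is transitive of rank~$3$ on $\tbinom{n}{2}$ for every $x\in E_n$; set $H:=(G_{0+C})\rho$. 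A transitive rank-$3$ subgroup of $S_n$ on $\tbinom{n}{2}$ is $4$-homogeneous on $[n]$ (cf.\ \cite[Lemma~5]{Hig1970} and the proof of Theorem~\ref{rank 3}), so $\Aut(C)$, which contains $H$, is $4$-homogeneous on $[n]$.

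Since $d\geq7$, Lemma~\ref{code} shows that $\Gamma(C)$ is a rectagraph with $a_2=0$ and $c_3=3$, and by Lemma~\ref{code aut} the stabiliser $\Aut(\Gamma(C))_{0+C}=\Aut(C)$ acts on $\Gamma(C)(0+C)$ as $\Aut(C)$ on $[n]$; hence the hypotheses of Theorem~\ref{main rect} are satisfied by $\Gamma(C)$. As $C$ is even, $\Gamma(C)$ is bipartite (Lemma~\ref{code}(i)), so $\Gamma(C)$ is one of $Q_n$, $\Box_n$ (with $n$ even and $n\geq8$), $\Gamma(C_{23}).2$, and $\Gamma(C_{24})$; applying Proposition~\ref{K} to the canonical covering $Q_n\to\Gamma(C)$ --- whose associated group $K^\pi$ equals $C$ --- exactly as in the proof of Theorem~\ref{main rect}, one finds that $C$ is, up to a permutation of coordinates, $\{0\}$, $\{0,1\}$, $C_{23}\cap E_{23}$, or $C_{24}$, and hence that $\Aut(C)$ is $S_n$, $S_n$, $M_{23}$, or $M_{24}$ respectively.

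I would then dispose of the cases in which every $(G_{x+C})\rho$ is automatically equal to $H$. If $\Aut(C)=M_n$ with $n\in\{23,24\}$, then $H\leq M_n$ is transitive of rank~$3$ on $\tbinom{n}{2}$, hence $4$-homogeneous, hence (as $n\in\{23,24\}$) $4$-transitive on $[n]$ by \cite{Kan1972}, whence $H=M_n$ by the classification of the finite simple groups (cf.\ \cite[Theorem~4.11]{Cam1999}); thus $H=G\rho=M_n$. If instead $\Aut(C)=S_n$ and $H\in\{S_n,A_n\}$, then $H\unlhd S_n$, so $H\unlhd G\rho$ since $H\leq G\rho\leq S_n$, and therefore every $(G_{x+C})\rho$ --- a $G\rho$-conjugate of $H$ --- equals $H$. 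In either situation, since $H$ is transitive on $\tbinom{n}{2}$, Lemma~\ref{rank 3 code}(iii) yields $G=(E_n/C)\rtimes H$.

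This leaves $\Aut(C)=S_n$ with $H\notin\{S_n,A_n\}$; by Theorem~\ref{rank 3}, either $H=\PGaL_2(8)$ and $n=9$, or $H=M_n$ with $n\in\{11,12,24\}$. In each case $H$ is a maximal subgroup of $A_n$, so $G\rho\in\{H,A_n,S_n\}$, and I claim $G\rho=H$, which (arguing as in the previous paragraph) completes the proof. To prove the claim, put $N:=G\cap(E_n/C)$; since $E_n/C\unlhd\Aut(\Gamma)$ we have $N\unlhd G$, and $N$ is an $\mathbb{F}_2(G\rho)$-submodule of $E_n/C$. Suppose $G\rho$ contained $A_n$. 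The heart of $S_n$ over $\mathbb{F}_2$ is irreducible, and $E_n/C$ equals $E_n$ if $C=\{0\}$ and equals that heart if $C=\{0,1\}$, so $N$ must be $\{0\}$, $\{0,1\}$ (the latter only if $C=\{0\}$ and $n$ is even), or $E_n/C$. Now $N=E_n/C$ gives $G=N\rtimes G_{0+C}$ and hence $G\rho=H$, a contradiction. For the remaining two options one compares $|G|=|N|\,|G\rho|=|E_n/C|\,|H|$ and $[E_n/C:N]=[G\rho:H]$ --- using that $\Gamma$ is $G$-vertex-transitive (Lemma~\ref{ver tran}), so that the orbit--stabiliser theorem applies --- against the $2$-adic valuations of $|A_n|$ and $|S_n|$ and the indices $[A_n:H]$ and $[S_n:H]$, none of which is a power of~$2$; each comparison is inconsistent, and so $G\rho=H$. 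The crux of the argument is this last step: the rank-$3$ hypothesis alone does not prevent the point stabilisers $(G_{x+C})\rho$ from being mutually non-conjugate copies of $H$ inside a large group $G\rho$, and forcing $G\rho=H$ requires the maximality of $H$ in $A_n$ together with the rigidity of $E_n/C$ as an $\mathbb{F}_2A_n$-module; the rest is bookkeeping with the results of Section~\ref{rect} and Lemma~\ref{rank 3 code}.
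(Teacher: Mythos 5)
Your argument reaches the correct conclusion, and its engine is ultimately the same as the paper's: the identity $[G\rho : (G_{x+C})\rho]=[E_n/C : N]$ coming from $G$-vertex-transitivity (Lemma \ref{ver tran}) together with $|V\Gamma|$ being a power of $2$, the fact that the exceptional rank $3$ groups have non-$2$-power index in $A_n$ and $S_n$, and the endgame via Lemma \ref{rank 3 code}(iii). But your route is genuinely different and noticeably heavier. The paper never determines $C$ or $\Aut(C)$: it applies Theorem \ref{rank 3} to $G\rho$ itself (which is transitive of rank $3$ on $\tbinom{n}{2}$ because it is sandwiched between $H$ and $S_n$) and to every $(G_{x+C})\rho$, then uses the power-of-$2$ index to force $A_n\leq (G_{x+C})\rho$ whenever $A_n\leq G\rho$, and conjugacy of vertex stabilisers to conclude $(G_{x+C})\rho=H$ in all cases. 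This sidesteps your detour through Theorem \ref{main rect} (pinning down $C$ up to a coordinate permutation), the maximality of $\PGaL_2(8)$ and the Mathieu groups in $A_n$, and all module-theoretic input; indeed your submodule analysis of $N$ is redundant, since your own counting $[E_n/C:N]=[G\rho:H]$ rules out $G\rho\supseteq A_n$ for \emph{any} subgroup $N$ of $E_n/C$.

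Two points in your last paragraph need repair. First, the case list omits $(H,n)=(M_{23},23)$, which does occur (with $C=\{0\}$: the pair $\tfrac{1}{2}Q_{23}$, $2^{22}\rtimes M_{23}$ appears in Table \ref{tab: group}); the identical argument disposes of it, but it must be included. Second, the deduction ``$H$ is maximal in $A_n$, so $G\rho\in\{H,A_n,S_n\}$'' is incomplete: since $H\leq A_n$, maximality only gives $G\rho\cap A_n\in\{H,A_n\}$, so a priori $G\rho$ could be a subgroup $X$ with $X\cap A_n=H$ and $[X:H]=2$. This possibility is excluded because $N_{S_n}(H)=H$ for these $H$ (a fact you do not invoke), or more simply one can note that such an $X$ normalises $H$, so the normality argument of your previous paragraph already covers it. You also quietly need irreducibility of the heart for $A_n$, not just $S_n$, and the (true but unproved here) maximality statements; the paper's direct appeal to Theorem \ref{rank 3} for $G\rho$ avoids all of these extra inputs.
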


\begin{proof}
Let $\Gamma$ be a halved graph of $\Gamma(C)$, and suppose that $\Gamma$ is locally rank 3 with
respect to $G$. Then $\Aut(\Gamma)= (E_n/C)\rtimes \Aut(C)$ by Lemma \ref{rank 3 code}. Again, we may assume
that $V\Gamma=E_n/C$ by Lemma \ref{code}. Let $\rho:\Aut(\Gamma)\to \Aut(C)$ be the natural
projection map and $H:=(G_{0+C})\rho$. Then by Lemma \ref{rank 3 code} (i), $H$ is transitive of
rank 3 on $\tbinom{n}{2}$, and so it suffices to show that $H=(G_{x+C})\rho$ for all $x\in E_n$, for
then $G=(E_n/C)\rtimes H$ by Lemma \ref{rank 3 code} (iii), as desired.

Since $H$ is transitive of rank 3 on $\tbinom{n}{2}$ and $H\leq G\rho\leq S_n$ where $n\geq 5$, the
group $G\rho$ is also transitive of rank 3 on $\tbinom{n}{2}$. Moreover, $(G_{x+C})\rho$ is
transitive of rank 3 on $\tbinom{n}{2}$ for all $x\in E_n$ by Lemma \ref{rank 3 code} (i). Thus
$(G\rho,n)$ and $((G_{x+C})\rho,n)$ for all $x\in E_n$ are one of $(S_n,n)$ or $(A_n,n)$ for $n\geq 5$,
$(\PGaL_2(8),9)$, or $(M_{n},n)$ for $n=11$, $12$, $23$ or $24$ by Theorem
\ref{rank 3}.

If $(G\rho,n)$ is one of $(\PGaL_2(8),9)$ or $(M_{n},n)$ where $n=11$, $12$, $23$ or $24$, then
$(G_{x+C})\rho=G\rho$ for all $x\in E_n$, as desired, so we may assume that $A_n\leq G\rho$. Fix
$x\in E_n$. Since $|(G_{x+C})\rho|=|G_{x+C}|$ by Lemma \ref{rank 3 code} (i), the index
$[G\rho:(G_{x+C})\rho]$ divides $ [G:G_{x+C}]$, but $G$ acts transitively on $V\Gamma$ by Lemma
\ref{ver tran} since $\Gamma$ contains triangles, and $|V\Gamma|\mid 2^{n}$, so $[G\rho:(G_{x+C})\rho]=2^r$ for some $r\geq 0$. Since
$(G_{x+C})\rho$ is one of $S_n$, $A_n$, $\PGaL_2(8)$, or $M_{n}$ for $n=11$, $12$, $23$ or $24$, we
must have $A_n\leq (G_{x+C})\rho$. Since $G$ acts transitively on $V\Gamma$, the groups $G_{0+C}$
and $G_{x+C}$ are conjugate in $G$, and so the groups $H$ and $(G_{x+C})\rho$ are  conjugate in  $G\rho$. Thus
$H=(G_{x+C})\rho$.

The converse is Lemma \ref{rank 3 code} (ii), so the proof is complete.
\end{proof}

Recall from \S \ref{coset graphs} that if $C$ is a linear code in $\mathbb{F}_2^n$ that is not even, then $\Gamma(C)_2\simeq
\tfrac{1}{2}\Gamma(C\cap E_n)$. Thus Proposition \ref{G} can be used for codes that are not even as well.

Next we deal with small $n$. When $n=3$, the only locally $T_3\simeq K_3$ graph is $K_{4}$, which
has automorphism group $S_4$.

\begin{lemma}
\label{n=3} $K_{4}$ is locally rank $3$ with respect to $G$ if and only if $G=A_4$.
\end{lemma}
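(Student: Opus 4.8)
The plan is to analyze the rank of the permutation group induced by $G_u$ on the four vertices of $K_4$ directly. Since $K_4$ has girth $3$ but is complete, Lemma \ref{orbits} and Proposition \ref{2 orbits} do not apply, so the classification must be done by hand using the fact that $\Aut(K_4)=S_4$ acting naturally on the $4$ vertices.

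First I would observe that $K_4$ is locally rank $3$ with respect to $G\leq S_4$ if and only if, for every vertex $u$, the stabiliser $G_u$ (acting on the remaining $3$ vertices $\Gamma(u)$) is transitive of rank $3$. Transitivity on a $3$-element set forces $|G_u^{\Gamma(u)}|$ to be divisible by $3$, and rank $3$ means $G_u^{\Gamma(u)}$ has exactly three orbits on ordered pairs from $\Gamma(u)$: the diagonal plus two off-diagonal orbits. For a transitive group on $3$ points, the only possibilities are the cyclic group $C_3$ (rank $3$: the diagonal and the two ``rotation directions'') and $S_3$ (rank $2$). So I need $G_u$ to induce exactly $C_3$ on $\Gamma(u)$ for every $u$.

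Next I would run through the transitive subgroups of $S_4$: since $\Gamma$ must be $G$-vertex-transitive by Lemma \ref{ver tran} (as $K_4$ is not bipartite, being a connected non-complete... wait — actually $K_4$ is complete, so I instead note vertex-transitivity is forced because locally rank $3$ requires $G_u$ transitive on $\Gamma(u)$ for all $u$, hence all vertices are in one $G$-orbit by connectivity). The vertex-transitive subgroups of $S_4$ of order divisible by $3\cdot 4=12$... more carefully, $|G| = |G_u|\cdot 4$ and $3 \mid |G_u|$, so $12 \mid |G|$, giving $G = A_4$ or $G=S_4$. For $G=S_4$: the stabiliser of a point is $S_3$, which acts on the other $3$ points as the full symmetric group $S_3$, which is rank $2$, not rank $3$. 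For $G=A_4$: the stabiliser of a point is $A_3\cong C_3$, acting on the other $3$ points as $C_3$, which is transitive of rank $3$. Hence $G=A_4$ is the unique possibility.

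I expect the only subtlety is being careful that no smaller subgroup works: a subgroup $G$ with $G_u^{\Gamma(u)} = C_3$ needs $|G_u| \geq 3$ and $|G| \geq 12$, and the only subgroups of $S_4$ of order at least $12$ are $A_4$ (order $12$) and $S_4$ (order $24$), so the check above is exhaustive. The converse direction — that $A_4$ does make $K_4$ locally rank $3$ — is the computation just given: $(A_4)_u \cong C_3$ acts regularly, hence transitively of rank $3$, on the three neighbours of $u$, and this holds for every vertex by the vertex-transitivity of $A_4$ on $K_4$. No step here is a genuine obstacle; the point is simply to record the base case $n=3$ explicitly since it is excluded from the general coding-theoretic machinery (which requires $n \geq 5$).
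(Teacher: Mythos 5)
Your proof is correct and takes essentially the same route as the paper's: both hinge on the fact that a transitive rank-3 group on the three neighbours must induce $C_3$, forcing $G_u\simeq A_3$, and then pin down $G=A_4$ (the paper via the $3$-cycles generating $A_4$, you via the order bound $|G|\geq 12$ and checking $A_4$ and $S_4$ — an immaterial difference). One small remark: Lemma \ref{ver tran} only requires $\Gamma$ to be connected and non-bipartite, not non-complete, so it does apply to $K_4$ and your ad hoc vertex-transitivity argument, though correct, was not needed.
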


\begin{proof}
Suppose that $\Gamma:=K_4$ is locally rank 3 with respect to $G$. Clearly $G_u$ acts faithfully on
$\Gamma(u)$ for all $u\in V\Gamma$. Since $S_3$ has rank 2 on $\tbinom{3}{2}$, it follows that
$G_u\simeq A_3$ for all $u\in V\Gamma$, and so $G$ contains all of the $3$-cycles of $A_4$. Thus
$G=A_4$, and $\Gamma$ is indeed locally rank 3 with respect to $G$.
\end{proof}

Similarly, when $n=4$, the only locally $T_4\simeq K_{3[2]}$ graph is $K_{4[2]}$, which has
automorphism group $S_2\wr S_4$. In the following, we write elements of $S_2\wr S_4$ in the form $(\sigma_1,\sigma_2,\sigma_3,\sigma_4)\sigma$, where $(\sigma_1,\sigma_2,\sigma_3,\sigma_4)\in S_2^4$ and $\sigma\in S_4$. We also write $(12)$ for the transposition in $S_2$ and $S_4$ that interchanges 1
and 2.

\begin{lemma}
\label{n=4} The graph $K_{4[2]}$ is locally rank $3$ with respect to $G$ if and only if $G$ is one
of $S_2\wr S_4$, $E\rtimes S_4$ or $(E\rtimes A_4)\langle \tau\rangle$, where
$E:=\{(\sigma_1,\sigma_2,\sigma_3,\sigma_4)\in S_2^4: |\{i:\sigma_i\neq
1\}|\equiv 0\mod 2\}$ and $\tau:=((12),1,1,1)(12)$.
\end{lemma}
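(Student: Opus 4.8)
The plan is to analyze the structure of $\Gamma := K_{4[2]}$ and its automorphism group $\Aut(\Gamma) = S_2 \wr S_4$ directly, exploiting the fact that $\Gamma$ is locally $T_4 \simeq K_{3[2]}$. First I would set up notation: identify $V\Gamma$ with $[4] \times \{0,1\}$, where the four ``parallel classes'' are the pairs $\{(i,0),(i,1)\}$, and two vertices are adjacent iff they lie in different classes. For a fixed vertex $u = (4,0)$, the neighbourhood $\Gamma(u)$ consists of the six vertices $(i,\epsilon)$ with $i \le 3$, and the induced graph on it is $K_{3[2]}$ with parallel classes $\{(i,0),(i,1)\}$. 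The key structural observation is that $K_{3[2]} \simeq T_4$ as a rank 3 graph, so I would record exactly which permutation groups on $\Gamma(u)$ are transitive of rank 3: these are the subgroups of $\Aut(K_{3[2]}) = S_2 \wr S_3$ that induce at least $A_3$ (equivalently $A_3$ or $S_3$) on the three parallel classes AND act transitively on the six points; a short case check shows these are precisely $S_2 \wr S_3$, the ``even'' subgroup $E_3 \rtimes S_3$ of index 2 (where $E_3$ is the even part of $S_2^3$), $S_2 \wr A_3 = S_2^3 \rtimes A_3$, and $(E_3 \rtimes A_3)\langle \tau'\rangle$ for a suitable odd ``mixed'' element $\tau'$. (Rank 2 is impossible since $T_4$ is not complete or edgeless, and rank $\ge 4$ contradicts rank 3; $A_3$ is needed because $S_3$ itself has rank 2 on the three classes but its full preimage can still have rank 3 on the six points — here I must be careful.)

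Next I would use vertex-transitivity: by Lemma~\ref{ver tran}, since $\Gamma = K_{4[2]}$ has girth 3 (it contains triangles) and is locally rank 3 with respect to $G$, it is $G$-vertex-transitive, so $|G| = 8 \cdot |G_u|$ and all the local actions $G_u^{\Gamma(u)}$ are permutation isomorphic. By Proposition~\ref{faithful} (or directly, since $K_{4[2]}$ is locally triangular), $G_u$ acts faithfully on $\Gamma(u)$, so $G_u$ is isomorphic to one of the four rank 3 subgroups of $S_2 \wr S_3$ listed above, of orders $48, 24, 24, 12$ respectively. Hence $|G| \in \{384, 192, 192, 96\}$, with $384 = |S_2 \wr S_4|$ forcing $G = S_2 \wr S_4$ in the first case. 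I would then argue the remaining three cases by combining the order constraint with the projection $\rho: S_2\wr S_4 \to S_4$. Writing $N := \ker\rho = S_2^4$, the local action of $G_u$ on the parallel classes through $u$ records $(G_u)\rho \le S_4$ as a subgroup of the point-stabiliser $S_3$; from the classification of local actions, $G\rho$ contains a point-stabiliser inducing $A_3$ or $S_3$, and since $G$ is vertex-transitive, $G\rho$ is transitive on $[4]$, giving $G\rho \in \{A_4, S_4\}$. Cross-referencing with the order of $G$ then pins down $G\rho$ and $|G \cap N|$ in each case.

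For the final identification I would show $G$ must actually equal one of $E\rtimes S_4$, $(E\rtimes A_4)\langle\tau\rangle$, or (in the $S_2\wr S_4$ case, already done) the full group, by reconstructing $G$ from its local data. The crucial point is that $G$ is generated by the vertex-stabilisers $G_u$ for $u \in V\Gamma$ together with (by transitivity) enough elements moving between vertices; more precisely, since $\Gamma$ is connected and $G$-edge-transitive, $G = \langle G_u, g \rangle$ for any $g$ mapping $u$ to a neighbour, and the structure of $G_u$ plus the coset information determines $G$ uniquely up to conjugacy once $G\rho$ and $G \cap N$ are fixed. I would verify that $E \rtimes S_4$ (with $E$ the even-weight subgroup of $S_2^4$, of order $128 \cdot 3 = 192$... rather $|E| = 8$, so $|E \rtimes S_4| = 8 \cdot 24 = 192$) has local action $E_3 \rtimes S_3$ of order $24$ — which requires checking that the stabiliser in $E \rtimes S_4$ of $u=(4,0)$ projects onto the ``even'' subgroup of $S_2\wr S_3$ — and that $(E\rtimes A_4)\langle\tau\rangle$, of order $8 \cdot 12 = 96$, has local action of order $12$ of the required mixed type; conversely the listed groups being locally rank 3 follows from this local computation together with $\Gamma$ being vertex-transitive under each.

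The main obstacle I expect is the bookkeeping around the ``parity'' structure: distinguishing which index-2 subgroups of $S_2 \wr S_3$ and $S_2 \wr S_4$ are conjugate, and verifying that the stabiliser of a vertex in $E \rtimes S_4$ or $(E\rtimes A_4)\langle\tau\rangle$ induces exactly the claimed group on the neighbourhood. This is because $S_2^n$ has several distinct $S_n$-invariant or $A_n$-invariant subgroups and ``mixed'' complements of the form $\langle\tau\rangle$-extensions, and one must track the interaction between the $S_2^4$-coordinate parity and the choice of $A_4$ versus $S_4$ on the base — a finite but fiddly computation that is cleanest to phrase via the homomorphism $S_2^4 \rtimes S_4 \to S_2$ sending $((\sigma_i),\sigma) \mapsto \mathrm{sgn}(\sigma) \cdot \prod_i \mathrm{sgn}(\sigma_i)$ (or the two independent parity characters) and reading off which of the four index-dividing subgroups lies in the kernel. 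Everything else is routine given Lemmas~\ref{ver tran}, \ref{autgamma} and Proposition~\ref{faithful}.
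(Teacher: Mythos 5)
Your overall route (classify the possible faithful local actions inside $\Aut(K_{3[2]})=S_2\wr S_3$, deduce $|G|=8\,|G_u|$, then identify $G$ through the projection $\rho$ to $S_4$) could be made to work, but the classification step on which everything else rests is wrong as stated. Since any subgroup preserving $K_{3[2]}$ fixes the unique non-neighbour of a vertex, a transitive $H\le S_2\wr S_3$ has rank $3$ on the six vertices exactly when the vertex stabiliser $H_v$ is transitive on the four neighbours of $v$; in particular $|H_v|\ge 4$ and $|H|\ge 24$, so an order-$12$ example is impossible. The rank $3$ subgroups are precisely $S_2\wr S_3$, $E_3\rtimes S_3$ and $(E_3\rtimes A_3)\langle\tau'\rangle$, of orders $48$, $24$, $24$. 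Your list errs twice: $S_2\wr A_3$ is \emph{not} rank $3$ (the stabiliser of a vertex is $S_2\times S_2$, with two orbits of length $2$ on the four neighbours, so the rank is $4$), and $(E_3\rtimes A_3)\langle\tau'\rangle$ has order $24$, not $12$. The same slip recurs globally: $(E\rtimes A_4)\langle\tau\rangle$ has order $8\cdot 12\cdot 2=192$, not $96$, so the admissible orders are $|G|\in\{384,192\}$ rather than $\{384,192,192,96\}$. Your proposed criterion (``induces at least $A_3$ on the parallel classes and is transitive on the six points'') is exactly where this goes astray, as $S_2\wr A_3$ satisfies it but has rank $4$; in fact every rank $3$ subgroup induces the full $S_3$ on the classes.

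This is not mere bookkeeping: with $S_2\wr A_3$ admitted as a local action, the group $S_2\wr A_4$ (order $192$, vertex stabiliser $S_2\wr A_3$) survives as a candidate, and nothing in your outline eliminates it, so the ``only if'' direction would not match the statement. The paper avoids classifying local actions altogether: by Lemma~\ref{orbits} the $G_{u,v}$-orbit containing $w$ (with $v,w$ corresponding to $\{1,2\},\{1,3\}$ in $T_4$) has length $4$, so $|G|=8\,|G_u|=48\,|G_{u,v}|=192\,|G_{u,v,w}|$; if $G\neq S_2\wr S_4$ this forces $G_{u,v,w}=1$ and $[S_2\wr S_4:G]=2$, and then $G\rho=A_4$ is excluded precisely because $G=S_2\wr A_4$ would give $G_{u,v,w}=S_2\neq 1$. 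If you repair the local classification (the correct test being transitivity of the vertex stabiliser on the four common neighbours), your argument becomes a somewhat longer version of this one, and the final identification of the two index-$2$ subgroups of $S_2\wr S_4$ projecting onto $S_4$, namely $E\rtimes S_4$ and $(E\rtimes A_4)\langle\tau\rangle$, proceeds the same way in both; the converse direction is the same routine local check in either approach.
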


Note that if we identify $S_2$ with $\mathbb{F}_2$, then the subgroup $E$ of $S_2^4$ defined in
Lemma \ref{n=4} corresponds to the subgroup $E_4$ of $\mathbb{F}_2^4$.

\begin{proof}
Suppose that $\Gamma:=K_{4[2]}$ is locally rank 3 with respect to $G$. Then $\Aut(\Gamma)=S_2\wr
S_4$. Clearly $G_u$ acts faithfully on $\Gamma(u)$ for all $u\in V\Gamma$. If $G=S_2\wr S_4$, then we are done, so we may assume that $G<S_2\wr S_4$.  Fix $u\in V\Gamma$, and let $v,w\in \Gamma(u)$ correspond to $\{1,2\}$ and $\{1,3\}$ respectively.
Then $G_{u,v}$ has an orbit of size 4 containing $w$. Since $G$ is transitive on $V\Gamma$ by Lemma
\ref{ver tran}, and since $G_u$ is transitive on $\Gamma(u)$, we have that
$|G|=|G_u|8=|G_{u,v}|48=|G_{u,v,w}| 2^3\cdot 24$, and so $G_{u,v,w}=1$ and $G$ has index 2 in
$S_2\wr S_4$. Let $\rho:S_2^4\rtimes S_4\to S_4$ be the natural projection map. Then $G\leq
S_2^4\rtimes G\rho$, so $12\leq |G\rho|$. Note that $G\rho\neq A_4$, or else $G= S_2\wr A_4$ and
$G_{u,v,w}=S_2$, a contradiction. Hence $G\rho=S_4$, so $G$ is either $E\rtimes S_4$ or $(E\rtimes
A_4)\langle \tau\rangle$. Conversely, $\Gamma$ is indeed locally rank 3 with respect to $S_2\wr S_4$, $E\rtimes S_4$ and $(E\rtimes A_4)\langle \tau\rangle$.
\end{proof}

Now we prove Theorem \ref{main}.

\begin{proof}[Proof of   Theorem \ref{main}]
For this proof, recall that the bipartite double $\Gamma(C_{23}).2$ is isomorphic to $\Gamma(C_{23}\cap E_{23})$ by Lemma \ref{code} (iii).

Let $\Gamma$ be a connected graph that is locally triangular and locally rank 3 with respect to $G$.
We wish to show that $\Gamma$ is one of the graphs in (i)-(iv) and $G$ is one of the groups in
Table \ref{tab: group}.

The graph $\Gamma$ is locally $T_n$ for some integer $n\geq 2$ by \cite[Proposition
4.3.9]{BroCohNeu1989}. If $n=2$, then $G_u^{\Gamma(u)}$ has rank 1, a contradiction. If $n=3$, then
$\Gamma$ is locally $K_3$, and so $\Gamma\simeq K_4\simeq \tfrac{1}{2}Q_3$ and $G$ is one of the
groups in Table \ref{tab: group} by Lemma \ref{n=3}. If $n=4$, then $\Gamma$ is locally $K_{3[2]}$,
and so $\Gamma\simeq K_{4[2]}\simeq \tfrac{1}{2} Q_4$ and $G$ is one of the groups in Table
\ref{tab: group} by Lemma \ref{n=4}. Thus we may assume that $n\geq 5$.

By \cite[Propositions 1.1.2 and 4.3.9]{BroCohNeu1989}, $\Gamma$ is a halved graph of a bipartite rectagraph
$\Pi$ of valency $n$ with $c_3(\Pi)=3$. Let $u\in V\Gamma$. By Lemma \ref{pi}, there exists a
covering $\pi:Q_n\to \Pi$ such that $0\pi=u$. Since $n\geq 5$, the group $N^\pi_0$ is transitive of
rank 3 on $\tbinom{n}{2}$ by Lemmas \ref{stab}, \ref{autgamma} (ii) and \ref{full}. It is then routine to verify that  $N^\pi_0$ acts $4$-homogeneously on $[n]$, and so $\Aut(\Pi)_u$ acts $4$-homogeneously on
$\Pi(u)$ by Lemma \ref{stab}.

Since $\Pi$ is bipartite, it follows from  Theorem \ref{main rect} and Lemma \ref{code} (i)  that $\Pi$ is isomorphic to the
coset graph $\Gamma(C)$ where one of the following occurs: $C=\{0\}$ and $n\geq 5$; $C=\{0,1\}$, $n$
is even and $n\geq 8$; $C=C_{23}\cap E_{23}$ and $n=23$; or $C=C_{24}$ and $n=24$. Since the halved
graphs of $\Pi$ are then isomorphic by Lemma \ref{code} (ii), the graph $\Gamma$ is \textit{the}
halved graph of $\Pi$. Thus $\Gamma$ is one of the graphs described in (i)-(iv) where $n\geq 5$, and
so $G$ is one of the groups in Table \ref{tab: group} by Theorem \ref{rank 3} and Proposition
\ref{G}.

Conversely, let $\Gamma$ be one of the graphs in (i)-(iv) and $G$ one of the groups in Table
\ref{tab: group}. Then $\Gamma$ is locally triangular by Lemma \ref{code tri} . If $n=3$ or $4$, then $\Gamma$ is isomorphic to $K_4$ or $K_{4[2]}$ respectively, so $\Gamma$ is locally rank 3 with respect to $G$ by Lemmas \ref{n=3} and \ref{n=4}. If $n\geq 5$, then $\Gamma$ is locally rank 3 with respect to $G$ by Theorem \ref{rank 3} and Proposition \ref{G}.
\end{proof}

Lastly, we prove Corollary \ref{Hall Shult}.

\begin{proof}[Proof of Corollary \ref{Hall Shult}]
Suppose that there exists $G\leq \Aut(\Gamma)$ such that, for all  $u\in V\Gamma$, the action of $G_u$ on $\Gamma(u)$ is permutation isomorphic to the action of $H$  on $\tbinom{n}{2}$, where $H\leq S_n$ is transitive of rank 3 on
$\tbinom{n}{2}$ and $n\geq 5$. Let $u\in V\Gamma$ and $v\in \Gamma(u)$. Lemma \ref{orbits} implies that the orbits of $G_{u,v}$ on
$\Gamma(u)$ are $\{v\}$, $\Gamma(u)\cap\Gamma(v)$ and $\Gamma(u)\cap \Gamma_2(v)$. Without loss of
generality, we may assume that $v$ corresponds to $\{1,2\}$. Note that the orbits of $H_{\{1,2\}}$
on $\tbinom{n}{2}\setminus\{\{1,2\}\}$ consist of the set $X$ of $2$-subsets containing either 1 or 2, and the set $Y$
of $2$-subsets containing neither 1 nor 2. If $X=\Gamma(u)\cap\Gamma(v)$, then $[\Gamma(u)]\simeq T_n$, and if $X=\Gamma(u)\cap\Gamma_2(v)$, then $[\Gamma(u)]\simeq\overline{T}_n$. Since $\Gamma$ has girth 3, it is $G$-vertex transitive by Lemma \ref{ver tran}, and so $\Gamma$ is either
locally $T_n$ or locally $\overline{T}_n$. In the former case, Theorem \ref{main} applies, and in
the latter case, \cite[Theorem 2]{HalShu1985} applies.

Conversely, if $\Gamma$ is a graph from Theorem \ref{main}, then $\Gamma$ is locally rank 3 and
locally $T_n$, so the claim holds. If $\Gamma=\overline{T}_{n+2}$ where $n\geq 5$, then
$\Aut(\Gamma)_u=S_n\times C_2$ and $\Aut(\Gamma)_u^{\Gamma(u)}=S_n$ for all $u\in V\Gamma$, in which
case the claim holds with $G=\Aut(\Gamma)$. We obtain the same result when $n=5$ and $\Gamma$ is the
Conway-Smith graph by \cite[Theorem 13.2.3]{BroCohNeu1989}, or when $n=5$ and $\Gamma$ is the
commuting involutions graph of the conjugacy class of the involutory Galois field automorphism in
$\PSigmaL_2(25)$ by \cite[Proposition 12.2.2]{BroCohNeu1989}, or when $n=6$ and $\Gamma$ is the
complement of an elliptic quadric in the graph $\Sp_6(2)$ by {\sf GAP}
\cite{GAP4,FinInG,Grape}. Lastly, if $n=6$ and $\Gamma$ is the complement of a hyperplane in the
graph $\Sp_6(2)$, then 
$\Aut(\Gamma)_u^{\Gamma(u)}\simeq\Aut(\Gamma)_u=S_6$ for all $u\in V\Gamma$ by {\sf GAP} \cite{GAP4,FinInG,Grape},  in which case the claim
holds with $G=\Aut(\Gamma)$. \end{proof}

%
%

\section*{Acknowledgements} 

The first author acknowledges the support of the Australian Research Council Future Fellowship FT120100036.
The third author acknowledges the support of the Australian Research Council Discovery Grant DP35000000.

\bibliographystyle{acm}
\bibliography{jbf_references}

\end{document}